\newcommand{\C}{{\mathbb C}}       
\newcommand{\R}{{\mathbb R}}       
\newcommand{\Z}{{\mathbb Z}}       
\newcommand{\DD}{{\mathcal D}}
\newcommand{\HH}{{\mathcal H}}
\newcommand{\WW}{{\mathcal W}}
\newcommand{\CC}{{\mathcal C}}
\newcommand{\diam}{{\rm diam}}
\newcommand{\dist}{{\rm dist}}
\newcommand{\fiproof}{{\hspace*{\fill} $\square$ \vspace{2pt}}}
\newcommand{\imag}{{\rm Im}}
\newcommand{\rf}[1]{{(\ref{#1})}}
\newcommand{\supp}{{\rm supp}}
\newcommand{\vphi}{{\varphi}}
\newcommand{\ve}{{\varepsilon}}
\newcommand{\vv}{{\vspace{2mm}}}
\newcommand{\vvv}{{\vspace{3mm}}}
\newcommand{\wt}[1]{{\widetilde{#1}}}
\newcommand{\wh}[1]{{\widehat{#1}}}
\newcommand{\rest}{{\lfloor}}
\newtheorem{theorem}{Theorem}[section]
\newtheorem{lemma}[theorem]{Lemma}
\newtheorem{coro}[theorem]{Corollary}
\newtheorem*{theorem*}{Theorem}
\theoremstyle{definition}
\theoremstyle{remark}
\newtheorem{rem}[theorem]{Remark}
\numberwithin{equation}{section}
\newcommand{\brem}{\begin{rem}}
\newcommand{\erem}{\end{rem}}
\begin{document}

\title[The Beurling transform in Lipschitz domains]{Smoothness of the Beurling transform in Lipschitz domains}

\author{Victor Cruz} 
\address{Victor Cruz. Universidad Tec\-no\-l\'o\-gi\-ca de la Mix\-te\-ca, M\'exico}\email{victorcruz@mixteco.utm.mx}

\author{Xavier Tolsa}

\address{Xavier Tolsa. Instituci\'{o} Catalana de Recerca
i Estudis Avan\c{c}ats (ICREA) and Departament de
Ma\-te\-m\`a\-ti\-ques, Universitat Aut\`onoma de Bar\-ce\-lo\-na,
Catalonia} \email{xtolsa@mat.uab.cat}

\thanks{V.C.\ was supported partially by grants 2009SGR-000420 (Catalonia), MTM-2010-15657 (Spain), 
and NF-129254 (Spain).
X.T.\ was supported partially by grants
2009SGR-000420 (Catalonia) and MTM-2010-16232 (Spain).}

\maketitle

\begin{abstract}
Let $\Omega\subset\C$ be a Lipschitz 
domain and consider the Beurling transform of $\chi_\Omega$:
$$B\chi_\Omega(z)=\lim_{\ve\to0}\frac{-1}\pi\int_{w\in\Omega,|z-w|>\ve}\frac1{(z-w)^2}\,dm(w).$$
Let $1<p<\infty$ and $0<\alpha<1$ with $\alpha p>1$.
In this paper we show that if the outward unit normal $N$ on $\partial\Omega$ belongs to
the Besov space ${B}_{p,p}^{\alpha-1/p}(\partial\Omega)$, then $B\chi_\Omega$
is in the Sobolev space $W^{\alpha,p}(\Omega)$. This result is sharp. Further, 
together with recent results by Cruz, Mateu and Orobitg, this implies that
 the Beurling transform is bounded
in $W^{\alpha,p}(\Omega)$ if 
$N$  
belongs to ${B}_{p,p}^{\alpha-1/p}(\partial\Omega)$, assuming that 
$\alpha p>2$.
\end{abstract}


\section{Introduction}

In this paper we obtain sharp results on the Sobolev regularity of the Beurling transform
of the characteristic function of Lipschitz domains. 
It has been shown recently in \cite{CMO} that this plays a crucial
role in the boundedness of the Beurling transform in the Sobolev spaces on domains.

Recall that the Beurling transform of a locally integrable function $f:\C\to\C$ is defined by the
following singular integral:
$$Bf(z)=\lim_{\ve\to0}\frac{-1}\pi\int_{|z-w|>\ve}\frac{f(x)}{(z-w)^2}\,dm(w)\qquad z\in\C,$$
whenever the limit and the integral makes sense.
It is well known that for $f\in L^p(\C)$, for some $1\leq p <\infty$, the limit above exists a.e.

The Beurling transform is an operator of great importance for the study of quasiconformal mappings
in the plane, due to the fact that it intertwines the $\partial$ and
$\bar \partial$ derivatives. Indeed, in the sense of distributions, one has
$$B(\bar\partial f)=\partial f.$$

Let $\Omega\subset\C$ be a bounded domain (open and connected). We say that $\Omega\subset\C$ is a $(\delta,R)$-Lipschitz domain if for each $z\in\partial\Omega$
there exists a Lipschitz function $A:\R\to\R$ with slope $\|A'\|_\infty\leq\delta$ such that, after
a suitable rotation,
$$\Omega\cap B(z,R) = \bigl\{(x,y)\in B(z,R):\,y>A(x)\bigr\}.$$
If we do not care about the constants $\delta$ and $R$, then we just say that $\Omega$ is a
Lipschitz domain. 

Also, we call an open set $\Omega$  a special $\delta$-Lipschitz domain if the exists
a Lipschitz function $A:\R\to\R$ with compact support such that
$$\Omega=\{(x,y)\in\C:\,y>A(x)\}.$$
As above, if we do not care about $\delta$, then we just say that $\Omega$ is a special
Lipschitz domain.

If in the definitions of Lipschitz and special Lipschitz domains, moreover, one asks $A$ to be of class $\CC^1$,
then  $\Omega$ is called a $\CC^1$ or a special $\CC^1$ domain, respectively.

The results that we obtain in this paper deal with the Sobolev smoothness of order $0<\alpha\leq1$ of $B\chi_
\Omega$ on $\Omega$, which depends on the Besov regularity of the boundary $\partial \Omega$. 
For the precise definitions of the Sobolev spaces $W^{\alpha,p}$ and the Besov spaces 
$B_{p,q}^\alpha$, see Section \ref{sec2}. Our first theorem concerns the Sobolev spaces $W^{1,p}(\Omega)$:

\begin{theorem}\label{teo}
Let $\Omega\subset\C$ be a either $(\delta,R)$-Lipschitz domain or a special $\delta$-Lipschitz domain, and let $1< p<\infty$.
Denote by $N(z)$ the outward unit normal of $\Omega$ in $z\in\partial\Omega$.
If $N\in \dot B_{p,p}^{1-1/p}(\partial \Omega)$, then $B(\chi_\Omega) \in \dot W^{1,p}(\Omega)$.
Moreover,
$$\|\partial B(\chi_\Omega)\|_{L^p(\Omega)}\leq c
\|N\|_{\dot {B}_{p,p}^{1-1/p}(\partial\Omega)},$$
with $c$ depending on $p$, $\delta$ and, in case $\Omega$ is a Lipschitz domain, on $R$.
\end{theorem}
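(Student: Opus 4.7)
My plan is to reduce the estimate to an algebraic identity on $\Omega$: for any sufficiently smooth extension $M$ of $\bar N$ from $\partial\Omega$ to $\overline{\Omega}$ with $\|M\|_\infty\leq 1$,
\begin{equation*}
\partial B\chi_\Omega(z) \;=\; 2\,M(z)\,\partial M(z)\;-\;2\,B\bigl(\chi_\Omega\,M\,\bar\partial M\bigr)(z),\qquad z\in\Omega.
\end{equation*}
Granting this, the $L^p(\Omega)$ bound is immediate: the $L^p(\C)$-boundedness of the Beurling transform together with H\"older give
\begin{equation*}
\|\partial B\chi_\Omega\|_{L^p(\Omega)} \leq C\,\|M\|_{L^\infty(\Omega)}\,\|\nabla M\|_{L^p(\Omega)},
\end{equation*}
and one picks $M$ optimally: by the trace theorem, $\dot B^{1-1/p}_{p,p}(\partial\Omega)$ is the trace space of $\dot W^{1,p}(\Omega)$, so there is an extension with $\|\nabla M\|_{L^p(\Omega)}\lesssim\|N\|_{\dot B^{1-1/p}_{p,p}(\partial\Omega)}$; composing with a Lipschitz projection of $\C$ onto the closed unit disk preserves the Sobolev norm and enforces $|M|\leq 1$ (since $|\bar N|=1$ on $\partial\Omega$).

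\textbf{Deriving the identity.} A partition of unity first reduces to the special Lipschitz case. For $z\in\Omega$ I would apply Stokes' theorem to the $1$-form $M(w)^2\,dw/(z-w)^2$ on $\Omega\setminus B(z,\epsilon)$. As $\epsilon\to 0$, the contribution of $\partial B(z,\epsilon)$ is computed by Taylor-expanding $M^2$ at $z$; angular integration kills everything except the linear term, leaving the residue $2\pi i\,\partial(M^2)(z)=4\pi i\,M(z)\partial M(z)$. The remaining boundary integral $\int_{\partial\Omega}\bar N^2\,dw/(z-w)^2$ simplifies using $dw=iN\,ds$ and $N\bar N=1$ to $i\int_{\partial\Omega}\bar N\,ds/(z-w)^2$, and a parallel Stokes computation starting from the area definition of $B\chi_\Omega$ identifies this as $2\pi\,\partial B\chi_\Omega(z)$. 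The area side produces a principal-value Beurling integral, $\operatorname{p.v.}\!\int_\Omega \bar\partial(M^2)/(z-w)^2\,dm = -\pi\,B(\chi_\Omega\bar\partial(M^2))(z)$. Rearranging yields the stated identity, with the factor $2$ coming from $\bar\partial(M^2)=2M\bar\partial M$.

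\textbf{Main obstacle.} The technical core is the residue calculation inside Stokes: $M^2/(z-w)^2$ has a non-integrable double pole at $w=z$, so one must remove a disk and carefully track which angular Fourier modes in the Taylor expansion of $M^2$ survive the limit $\epsilon\to 0$, verifying that exactly the $\partial(M^2)(z)$ piece is produced. A secondary point is that the chosen extension lies only in $W^{1,p}\cap L^\infty$, not in $C^1$; Stokes must therefore first be established for smooth $M$ and then extended by approximation, with the $L^p$-continuity of $B$ and the pointwise bound on $M$ ensuring that both the identity and the final norm bound pass to the limit.
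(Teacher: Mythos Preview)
Your approach is correct and takes a genuinely different route from the paper. The paper works entirely through \emph{pointwise} geometric estimates: via a Whitney decomposition of $\Omega$, it compares $\partial B\chi_\Omega(z)$ for $z$ in a Whitney square $Q$ with $\partial B\chi_{\Pi_Q}(z)=0$ for an approximating half-plane $\Pi_Q$, bounds the error by $\int_{\Omega\Delta\Pi_Q}|z-w|^{-3}\,dm(w)$, controls this dyadically by the $\beta_1$ coefficients of $\partial\Omega$, sums over Whitney squares, and finally invokes Dorronsoro's characterization of $\dot B^{1-1/p}_{p,p}$ in terms of the $\beta_1$'s. Your argument is instead global and algebraic: a single Stokes identity reduces the estimate to one application of the $L^p$-boundedness of $B$ and one invocation of the trace/extension theorem. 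This is considerably shorter and more conceptual for the $W^{1,p}$ statement. The price is scope: the paper's multiscale machinery carries over essentially verbatim to the fractional case $0<\alpha<1$ (their Theorem~1.2) and to arbitrary even homogeneous Calder\'on--Zygmund operators in $\R^n$, whereas your first-order identity relies on the complex-analytic manipulation $\bar N^2\,dw = i\bar N\,ds$ and has no obvious fractional analogue. Two small remarks on execution: first, the partition-of-unity reduction to the special Lipschitz case is unnecessary and potentially misleading here, since Stokes and the identity hold directly on a bounded Lipschitz $\Omega$; second, the passage from smooth $M$ to $M\in W^{1,p}\cap L^\infty$ must preserve the trace $\bar N$, which plain mollification does not, so you should either approximate $\bar N$ by smooth $g_n$ in $B^{1-1/p}_{p,p}(\partial\Omega)$ and use continuity of the extension operator, or appeal to a version of Stokes valid for $W^{1,1}$ forms.
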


\vvv

Above,
 $\dot W^{1,p}(\Omega)$ stands for the homogeneous Sobolev space on $\Omega$ consisting of the functions
whose distributional derivatives belong to $L^p(\Omega)$, while 
$\dot {B}_{p,p}^{1-1/p}(\partial \Omega)$ is the homogeneous Besov
space on $\partial\Omega$ associated to the indices $p,p$, with regularity $1-1/p$. 
See Section \ref{sec2} for more details. 

Also, let us remark that, as $B(\chi_\Omega)$ is analytic in $\Omega$,
it turns out that 
$$\|\partial B(\chi_\Omega)\|_{L^p(\Omega)}\approx \| B(\chi_\Omega)\|_{\dot W^{1,p}(\Omega)}.$$

For the fractional Sobolev spaces $W^{\alpha,p}(\Omega)$ for $0<\alpha<1$, we will prove the following result,
which is analogous to Theorem \ref{teo}:

\begin{theorem}\label{teo2}
Let $\Omega\subset\C$ be either a $(\delta,R)$-Lipschitz or 
a special $\delta$-Lipschitz domain, 
and let $1< p<\infty$ and $0<\alpha<1$ such that $\alpha\,p>1$.
Denote by $N(z)$ the outward unit normal of $\Omega$ in $z\in\partial\Omega$.
If $N\in B_{p,p}^{\alpha-1/p}(\partial \Omega)$, then $B(\chi_\Omega) \in \dot W^{\alpha,p}(\Omega)
\cap \dot B_{p,p}^\alpha(\Omega)$.
Moreover,
$$\|B(\chi_\Omega)\|_{\dot W^{\alpha,p}(\Omega)} + \|B(\chi_\Omega)\|_{\dot B^\alpha_{p,p}(\Omega)}\leq c\,
\|N\|_{\dot {B}_{p,p}^{\alpha-1/p}(\partial\Omega)},$$
with $c$ depending on $p$, $\alpha$, $\delta$ and, in case $\Omega$ is a Lipschitz domain, on $R$. 
\end{theorem}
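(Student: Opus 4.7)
My plan is to mimic the proof of Theorem \ref{teo}, but instead of working directly in $L^p$, to exploit the analyticity of $B\chi_\Omega$ in $\Omega$ via the classical characterization
$$\|f\|_{\dot B^\alpha_{p,p}(\Omega)}^p \;\asymp\; \|f\|_{\dot W^{\alpha,p}(\Omega)}^p \;\asymp\; \int_\Omega |f'(z)|^p\,\dist(z,\partial\Omega)^{(1-\alpha)p-1}\,dm(z)$$
valid for $f$ holomorphic on a Lipschitz domain. Both left-hand norms are then estimated in parallel by controlling the weighted integral on the right for $f=B\chi_\Omega$.

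The first step is a pointwise estimate for $\partial B\chi_\Omega(z)$, $z\in\Omega$. Applying Cauchy--Pompeiu on $\Omega\setminus\overline{B(z,\ve)}$ and letting $\ve\to 0$ (the boundary integral on $\partial B(z,\ve)$ vanishes because $1/(z-w)^2$ has zero residue), and then converting $d\bar w = -\bar N(w)^2\,dw$ along the boundary, yields
$$\partial B\chi_\Omega(z) \;=\; -\frac{i}{2\pi}\int_{\partial\Omega}\frac{\bar N(w)^2}{(z-w)^2}\,dw.$$
Since $\int_{\partial\Omega}(z-w)^{-2}\,dw=0$ for $z\in\Omega$, a constant can be subtracted inside the integral, and using $|\bar N(w)^2-\bar N(w^*)^2|\leq 2\,|N(w)-N(w^*)|$ this gives the key pointwise bound
$$|\partial B\chi_\Omega(z)| \;\lesssim\; \int_{\partial\Omega}\frac{|N(w)-N(w^*)|}{|z-w|^2}\,d\sigma(w),$$
where $w^*\in\partial\Omega$ is the closest boundary point to $z$.

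Substituting this into the weighted interior characterization reduces the theorem to the weighted trace-type inequality
$$\int_\Omega \dist(z,\partial\Omega)^{(1-\alpha)p-1}\!\left(\int_{\partial\Omega}\frac{|N(w)-N(w^*)|}{|z-w|^2}\,d\sigma(w)\right)^{\!p}\!dm(z) \,\lesssim\, \|N\|_{\dot B^{\alpha-1/p}_{p,p}(\partial\Omega)}^p.$$
I would prove this first in the special Lipschitz case $\Omega=\{y>A(x)\}$ and then deduce the bounded $(\delta,R)$-Lipschitz case by a partition of unity and standard localization. In the graph case, parametrize $\partial\Omega$ by the $x$-axis, split the inner integral dyadically with respect to $|w-w^*|$, use $|z-w|\gtrsim\dist(z,\partial\Omega)+|w-w^*|$, and apply Fubini together with a one-dimensional Hardy inequality in the $\dist(z,\partial\Omega)$ variable. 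After collecting the scales one arrives at
$$\int_{\partial\Omega}\int_{\partial\Omega}\frac{|N(w)-N(w')|^p}{|w-w'|^{\,1+(\alpha-1/p)p}}\,d\sigma(w)\,d\sigma(w'),$$
which is exactly $\|N\|_{\dot B^{\alpha-1/p}_{p,p}(\partial\Omega)}^p$.

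The hard part is this last step: simultaneously balancing the interior weight $\dist(z,\partial\Omega)^{(1-\alpha)p-1}$ against $|z-w|^{-2p}$ in such a way as to reproduce exactly the boundary Besov scale $\alpha-1/p$. The hypothesis $\alpha p>1$ enters precisely here, as it is what guarantees convergence of the Hardy integral in the direction normal to $\partial\Omega$ (the condition also reflects that $N$ is the trace on $\partial\Omega$ of a function of $W^{\alpha,p}(\C)$-type regularity, so this is the natural threshold). A secondary technical difficulty is that the Lipschitz (rather than $\CC^1$) regularity forces one to work with the intrinsic Besov norm on $\partial\Omega$, and one must verify that the dyadic bookkeeping is stable under the bi-Lipschitz flattening of the boundary.
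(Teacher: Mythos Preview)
Your route is genuinely different from the paper's. The paper never passes through a weighted $L^p$ norm of $\partial B\chi_\Omega$; instead it estimates the defining double integrals for $\dot W^{\alpha,p}(\Omega)$ and $\dot B^\alpha_{p,p}(\Omega)$ directly, via a Whitney decomposition of $\Omega$, comparison of $B\chi_\Omega$ with $B\chi_{\Pi_Q}$ for approximating half-planes $\Pi_Q$, and control of the error by the $\beta_1$ coefficients of $\partial\Omega$; Dorronsoro's theorem then converts the $\beta_1$ sums into $\|N\|_{\dot B^{\alpha-1/p}_{p,p}(\partial\Omega)}$. Your boundary representation of $\partial B\chi_\Omega$ and the subtraction of $\bar N(w^*)^2$ are correct and give a clean pointwise bound that the paper does not use, so the strategy is attractive. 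But there is a real gap at the very first step.

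The characterization you invoke,
\[
\|f\|_{\dot B^\alpha_{p,p}(\Omega)}^p \asymp \|f\|_{\dot W^{\alpha,p}(\Omega)}^p \asymp \int_\Omega |f'(z)|^p\,\dist(z,\partial\Omega)^{(1-\alpha)p-1}\,dm(z),
\]
fails already by scaling. On the half-plane $\mathbb H$, replacing $f$ by $f(\lambda\,\cdot)$ multiplies $\|f\|_{\dot B^\alpha_{p,p}(\mathbb H)}^p$ by $\lambda^{\alpha p-2}$, while the weighted integral with exponent $(1-\alpha)p-1$ picks up $\lambda^{\alpha p-1}$. The exponent $(1-\alpha)p-1$ is the one that characterizes the \emph{boundary} space $\dot B^\alpha_{p,p}(\partial\Omega)$ (equivalently, the ``analytic Besov space'' of boundary values), not the interior space $\dot B^\alpha_{p,p}(\Omega)$ of the paper; the correct interior weight is $\dist(z,\partial\Omega)^{(1-\alpha)p}$. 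With your exponent, the trace-type inequality you reduce to is dimensionally inconsistent with $\|N\|_{\dot B^{\alpha-1/p}_{p,p}(\partial\Omega)}^p$ and cannot hold. Moreover, even with the corrected weight, the asserted equivalence $\|f\|_{\dot W^{\alpha,p}(\Omega)}\asymp\|f\|_{\dot B^\alpha_{p,p}(\Omega)}$ for holomorphic $f$ on Lipschitz domains is not a standard fact: $\dot W^{\alpha,p}=F^\alpha_{p,2}$ and $\dot B^\alpha_{p,p}=F^\alpha_{p,p}$ are genuinely different for $p\neq 2$, and the paper itself treats the two seminorms by separate (though parallel) arguments. If you want to pursue this approach, you should (i) fix the exponent to $(1-\alpha)p$, (ii) supply a proof of the upper bound $\|f\|_{\dot B^\alpha_{p,p}(\Omega)}^p\lesssim\int_\Omega|f'|^p\dist^{(1-\alpha)p}\,dm$ for holomorphic $f$ on Lipschitz domains (the ``far'' pairs in the double integral require work), and (iii) either prove the analogous bound for $\dot W^{\alpha,p}$ or handle that seminorm by a separate argument.
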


Recall that the Beurling transform is bounded in $L^p(\C)$. Thus, saying that $B(\chi_\Omega) \in \dot W^{\alpha,p}(\Omega)$ is equivalent to saying that $B(\chi_\Omega) \in W^{\alpha,p}(\Omega)$
if $\Omega$ is bounded. Analogously, in the same situation, $B(\chi_\Omega)\in \dot B_{p,p}^\alpha(\Omega)$ if 
and only if $B(\chi_\Omega)\in B_{p,p}^\alpha(\Omega)$.

The Besov spaces $B_{p,p}^{\alpha-1/p}$ appear naturally in the context of Sobolev spaces.
Indeed, it is well known that the traces of the functions from $W^{\alpha,p}(\Omega)$ on $\partial\Omega$ coincide with the functions from 
$B_{p,p}^{\alpha-1/p}(\partial\Omega)$, whenever $\Omega$ is a Lipschitz domain. So, by combining this fact 
with Theorems \ref{teo} and \ref{teo2}, 
 one deduces that $B(\chi_\Omega)\in W^{\alpha,p}(\Omega)$
if $N$ is the trace of some (vectorial) function from $W^{1,p}(\Omega)$.

The results stated in Theorems \ref{teo} and \ref{teo2} are sharp. In fact, it has been proved in 
\cite{Tolsa-beurling}, for $0<\alpha\leq1$ with $\alpha\,p>1$, that if $\Omega$ is a $\CC^1$ domain and 
$B(\chi_\Omega) \in \dot W^{\alpha,p}(\Omega)$, then $N\in\dot {B}_{p,p}^{\alpha-1/p}(\partial\Omega)$.
So one deduces that
$$B(\chi_\Omega)\in \dot W^{\alpha,p}(\Omega)\quad\Longleftrightarrow \quad N\in \dot{B}_{p,p}^{\alpha-1/p}(\partial\Omega),\quad \mbox{for $0<\alpha\leq1$ with $\alpha\,p>1$,}$$
assuming $\Omega$ to be a $\CC^1$ domain. This shows that the smoothness of $B(\chi_\Omega)$ characterizes the
Besov regularity of the boundary $\partial\Omega$.

The hypotehsis $\alpha p> 1$ for our results is quite natural. We will prove below (see
Section \ref{secfi}) that if $\alpha\,p<1$, then $B(\chi_\Omega)\in \dot W^{\alpha,p}(\Omega)$ (and
in the case $\alpha<1$, $B(\chi_\Omega)\in\dot B^\alpha_{p,p}(\Omega)$ too), without any assumption on the 
Besov regularity of the boundary. In the endpoint case $\alpha p=1$ we will also obtain other partial
results (see Section \ref{secfi} again).

Our motivation to understand when $B\chi_\Omega\in W^{1,p}
(\Omega)$ arises from the results of Cruz, Mateu and Orobitg in \cite{CMO}. 
In this paper one studies the smoothness of quasiconformal mappings when the Beltrami coefficient belongs to $W^{\alpha,p}(\Omega)$, for some fixed $1<p<\infty$ and $0<\alpha<1$.
An important step in the arguments is the following kind of $T1$ theorem:

\begin{theorem*}[\cite{CMO}]
Let $\Omega\subset\C$ be a bounded $\CC^{1+\ve}$ domain, for some $\ve>0$, and let $1<p<\infty$ and $0<\alpha\leq1$ be such
that $\alpha\,p>2$. Then, the Beurling transform is bounded
in $W^{\alpha,p}(\Omega)$ if and only if $B(\chi_\Omega)\in W^{\alpha,p}(\Omega)$.
\end{theorem*}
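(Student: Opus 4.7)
The forward direction is immediate: since $\Omega$ is bounded, the restriction $\chi_\Omega|_\Omega\equiv 1$ lies trivially in $W^{\alpha,p}(\Omega)$, so if $B$ is bounded on $W^{\alpha,p}(\Omega)$ then $B\chi_\Omega\in W^{\alpha,p}(\Omega)$ by testing on this constant function.

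For the converse, the main structural ingredient is that the hypothesis $\alpha p>2$ yields the Sobolev embedding $W^{\alpha,p}(\Omega)\hookrightarrow L^\infty(\Omega)\cap\CC^{0,\alpha-2/p}(\Omega)$ and makes $W^{\alpha,p}(\Omega)$ a Banach algebra under pointwise multiplication. For any $f\in W^{\alpha,p}(\Omega)$, the H\"older bound $|f(w)-f(z)|\leq c\,|w-z|^{\alpha-2/p}$ renders the standard singular-integral subtraction absolutely convergent on $\Omega$:
\begin{equation*}
B(f\chi_\Omega)(z)\;=\;f(z)\,B\chi_\Omega(z)\;-\;\frac{1}{\pi}\int_\Omega\frac{f(w)-f(z)}{(z-w)^2}\,dm(w)\;=:\;f(z)\,B\chi_\Omega(z)+Sf(z).
\end{equation*}
The first summand lies in $W^{\alpha,p}(\Omega)$ by the algebra property combined with the hypothesis $B\chi_\Omega\in W^{\alpha,p}(\Omega)$, so the whole problem reduces to proving the bound $\|Sf\|_{W^{\alpha,p}(\Omega)}\leq c\,\|f\|_{W^{\alpha,p}(\Omega)}$ for the regularized operator $S$.

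To bound $S$, the plan is to estimate the Gagliardo seminorm
\begin{equation*}
\iint_{\Omega\times\Omega}\frac{|Sf(z_1)-Sf(z_2)|^p}{|z_1-z_2|^{2+\alpha p}}\,dm(z_1)\,dm(z_2)
\end{equation*}
directly. Adding and subtracting $(f(w)-f(z_1))/(z_2-w)^2$ in the integrand defining $Sf(z_1)-Sf(z_2)$ splits this difference into (i) a multiplicative piece of the form $(f(z_1)-f(z_2))\cdot B\chi_\Omega(z_2)$, which is immediately controlled by $\|f\|_{W^{\alpha,p}}$ and $\|B\chi_\Omega\|_{L^\infty\cap W^{\alpha,p}}$, and (ii) a kernel-difference piece integrated against $f(w)-f(z_1)$. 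The kernel-difference piece is handled by splitting the $\Omega$-integration relative to the scale $|z_1-z_2|$: in the far region the Calder\'on--Zygmund bound $|(z_1-w)^{-2}-(z_2-w)^{-2}|\leq c\,|z_1-z_2|\,|z_i-w|^{-3}$ combined with the Sobolev regularity of $f$ suffices, while in the near region absolute convergence against $|f(w)-f(z_i)|\leq c\,|w-z_i|^{\alpha-2/p}$ closes the estimate.

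The principal obstacle is the boundary behaviour: when $z_1,z_2$ lie close to $\partial\Omega$, the distance $|z_1-z_2|$ may be much smaller than $\dist(z_i,\partial\Omega)$, so the geometric splittings above must be carried out uniformly in the distance to the boundary. The $\CC^{1+\ve}$-regularity of $\partial\Omega$ enters here to enable local graph parametrizations of the boundary and well-behaved extensions of $f$ to $\C$. The algebraic decomposition $B(f\chi_\Omega)=f\,B\chi_\Omega+Sf$ is the heart of the $T1$-type argument: the hypothesis $B\chi_\Omega\in W^{\alpha,p}(\Omega)$ enters precisely through the multiplicative term, whereas the residual operator $S$ enjoys an intrinsic smoothing inherited from the extra factor $f(w)-f(z)$, which the Sobolev embedding promotes to H\"older regularity of positive exponent $\alpha-2/p$.
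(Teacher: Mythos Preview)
The paper does not contain a proof of this statement at all: the theorem is quoted from \cite{CMO} and used as a black box to derive the subsequent corollary. Consequently there is no ``paper's own proof'' against which to compare your proposal.

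That said, your outline follows the expected $T1$-type strategy for such results (as in \cite{Mateu-Orobitg-Verdera} for the Lipschitz-space setting, and presumably in \cite{CMO} for the Sobolev case): decompose $B(f\chi_\Omega)=f\cdot B\chi_\Omega+Sf$, invoke the algebra property of $W^{\alpha,p}(\Omega)$ for $\alpha p>2$ to handle the product, and then show that the regularized remainder $S$ is bounded on $W^{\alpha,p}(\Omega)$ independently of the hypothesis on $B\chi_\Omega$. Two remarks on the sketch itself. First, your description of the boundary obstacle is inverted: the issue near $\partial\Omega$ is that $\dist(z_i,\partial\Omega)$ may be much \emph{smaller} than $|z_1-z_2|$, not the other way around; this is exactly what prevents naive local arguments from closing and what forces the use of the $\CC^{1+\ve}$ regularity. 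Second, the step ``the kernel-difference piece is handled by splitting the $\Omega$-integration relative to the scale $|z_1-z_2|$'' is where essentially all the work lies; verifying that the resulting bounds integrate to a finite Gagliardo seminorm requires careful bookkeeping (in particular the far-region bound $|z_1-z_2|\cdot|z_i-w|^{-3}$ against H\"older regularity of $f$ does not by itself give enough decay without further use of cancellation or of the boundary regularity). Your proposal identifies the right decomposition and the right reason the hypothesis enters, but the analytic core --- the $W^{\alpha,p}$ boundedness of $S$ --- is asserted rather than established.
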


As a corollary of the preceding theorem and the results of this paper one obtains the following.

\begin{coro}
Let $\Omega\subset\C$ be a bounded Lipschitz domain and $0<\alpha\leq1$, $1<p<\infty$ such
that $\alpha\,p>2$. If the outward unit normal of $\Omega$ is in the Besov space $\dot B_{p,p}^{\alpha-1/p}(\partial\Omega)$, then the Beurling transform is bounded
in $W^{\alpha,p}(\Omega)$.
\end{coro}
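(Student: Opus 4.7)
The proposal is to deduce the corollary by chaining Theorem \ref{teo} / Theorem \ref{teo2} of this paper with the $T1$-type theorem of \cite{CMO} cited just above the corollary. The main theorems of the paper already give, under the hypothesis $N\in\dot B_{p,p}^{\alpha-1/p}(\partial\Omega)$, the membership $B(\chi_\Omega)\in \dot W^{\alpha,p}(\Omega)$, and since $\Omega$ is bounded and $B$ is bounded on $L^p(\C)$, this is equivalent to $B(\chi_\Omega)\in W^{\alpha,p}(\Omega)$. Once we have that, the $T1$-type theorem of \cite{CMO} immediately concludes that $B$ is bounded on $W^{\alpha,p}(\Omega)$.

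The one point that needs justification is that the hypothesis of the \cite{CMO} theorem is satisfied, namely that $\Omega$ is $\CC^{1+\ve}$ for some $\ve>0$, while our assumption is only that $\Omega$ is Lipschitz and $N\in\dot B_{p,p}^{\alpha-1/p}(\partial\Omega)$. This upgrade is provided by a one-dimensional Besov embedding. The boundary $\partial\Omega$ is (locally) a Lipschitz graph over $\R$, and the Besov space $\dot B_{p,p}^{s}(\partial\Omega)$ with $sp>1$ embeds into the Hölder class $\CC^{s-1/p}(\partial\Omega)$. Taking $s=\alpha-1/p$ we have $sp=\alpha p-1>1$ (using $\alpha p>2$) and $s-1/p=\alpha-2/p>0$. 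Hence $N\in\CC^{\alpha-2/p}(\partial\Omega)$, so the Lipschitz function $A$ describing $\partial\Omega$ locally has its derivative $A'$ Hölder continuous of exponent $\alpha-2/p>0$, meaning $\partial\Omega$ is of class $\CC^{1+\ve}$ with $\ve=\alpha-2/p$.

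Putting everything together, the plan is: first, invoke the Besov embedding to promote $\Omega$ to a $\CC^{1+\ve}$ domain; second, apply Theorem \ref{teo} (if $\alpha=1$) or Theorem \ref{teo2} (if $0<\alpha<1$) to obtain $B(\chi_\Omega)\in W^{\alpha,p}(\Omega)$; third, feed this conclusion into the $T1$-type theorem of \cite{CMO} to obtain the boundedness of $B$ on $W^{\alpha,p}(\Omega)$. There is essentially no obstacle here; the proof is a short combination of cited results, and the only ingredient that is not completely mechanical is the embedding step, which is a standard fact about Besov spaces on the real line (or on a Lipschitz curve) and requires nothing beyond the assumption $\alpha p>2$.
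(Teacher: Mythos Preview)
Your proposal is correct and follows essentially the same route as the paper. The paper likewise observes that $N\in\dot B_{p,p}^{\alpha-1/p}(\partial\Omega)$ forces (via Lemma~\ref{lemanorm} and the one-dimensional Besov embedding $B_{p,p}^{1+\alpha-1/p}(\R)\subset\CC^{1+\ve}(\R)$ with $\ve=\alpha-2/p>0$) that the local parameterizations are $\CC^{1+\ve}$, then applies Theorems~\ref{teo}/\ref{teo2} to get $B(\chi_\Omega)\in W^{\alpha,p}(\Omega)$, and finally invokes the \cite{CMO} $T1$-type theorem; the only cosmetic difference is that the paper passes to the parameterization $A$ before applying the embedding, whereas you apply it directly to $N$ on $\partial\Omega$.
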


Let us remark that, by Lemma \ref{lemanorm} below, the fact that $N\in \dot B_{p,p}^{\alpha-1/p}(\partial\Omega)$ implies that the local parameterizations of the boundary can be taken from $B_{p,p}^{1+\alpha-1/p}(\R)\subset \CC^{1+\ve}(\R)$ because $\alpha p>2$, and thus theorem from Cruz-Mateu-Orobitg applies.

Let us also mention that the boundedness of the Beurling transform in the Lipschitz spaces 
$\rm Lip_\ve(\Omega)$ for
domains $\Omega$ of class $\CC^{1+\ve}$ has been studied previously in \cite{Mateu-Orobitg-Verdera}, \cite{Li-Vogelius},
and \cite{Depauw}, because of the applications to quasiconformal mappings and PDE's.

It is well known that the Beurling transform of the characteristic function of a ball
vanishes identically inside the ball. Analogously, the Beurling transform of the characteristic
function of a half plane  is constant in the half plane, and also in the complementary half
plane. So its derivative vanishes everywhere except in its boundary. This fact will play a crucial
role in the proofs of Theorems \ref{teo} and \ref{teo2}. Roughly speaking, the arguments consist in 
comparing $B(\chi_\Omega)(x)$ (or an appropriate ``$\alpha$-th derivative'') to $B(\chi_\Pi)(x)$
(or to the analogous ``$\alpha$-th derivative''), where $\Pi$ is some half plane that approximates
$\Omega$ near $x\in\Omega$.
The errors are estimated in terms of the so called $\beta_1$ coefficients.
Given an interval $I\subset\R$ and a function $f\in L^1_{loc}$, one sets
\begin{equation}\label{defbeta} 
\beta_1(f,I) = \inf_\rho \frac1{\ell(I)}\int_{3I} \frac{|f(x)-\rho(x)|}{\ell(I)}\,dx,
\end{equation}
where the infimum is taken over all the affine functions $\rho:\R\to\R$.
The coefficients $\beta_1$'s (and other variants $\beta_p$, 
$\beta_\infty$,\ldots) appeared first in the works of Jones \cite{Jones-traveling} and David and Semmes
\cite{DS1} on quantitative rectifiability. 
They have become a useful tool in problems which involve geometric measure theory and multi-scale analysis. See \cite{DS2}, \cite{Leger}, \cite{Mas-Tolsa}, \cite{Tolsa-bilip}, or \cite{Tolsa-jfa}, for example, besides the aforementioned references. Finally, the connection with the Besov
smoothness from the boundary arises from a nice
 characterization of Besov spaces in terms of $\beta_1$'s due to
Dorronsoro \cite{Dorronsoro2}.

The plan of the paper is the following. In Section \ref{sec2}, some preliminary notation and background 
is reviewed. 
In Section \ref{sec3} we prove some auxiliary lemmas which will be used later.
In Section \ref{sec4} we obtain more auxiliary results necessary for Theorem
\ref{teo}, which is proved in the subsequent section. 
Sections \ref{sec6}, \ref{sec7} and \ref{sec8} are devoted to Theorem \ref{teo2}. 
The final Section \ref{secfi} contains some results for the case $\alpha p\leq1$.


\vvv
\section{Preliminaries}\label{sec2}

As usual, in the paper the letter `$c$' stands for some
constant (quite often an absolute constant) which may change its value at different occurrences. On
the other hand, constants with subscripts, such as $c_0$, retain
their values at different occurrences. The notation $A\lesssim B$
means that there is a fixed positive constant $c$ such that
$A\leq cB$. So $A\approx B$ is equivalent to $A\lesssim B \lesssim
A$. 

For $n\geq 2$ we will denote the Lebesgue measure in $\R^n$  by $m$ or $dm$. On the other hand, 
for $n=1$, we will use the typical notation $dx$, $dy$,\ldots 



\subsection{Dyadic and Whitney cubes} \label{sec2.1}

By a cube in $\R^n$ (in our case $n=1$ or $2$) we mean a cube with edges parallel to the axes. Most of the cubes in our paper will be
dyadic cubes, which are assumed to be half open-closed. The collection of all dyadic cubes is denoted by $\DD(\R^n)$. They are called intervals for $n=1$ and squares for $n=2$.
The side length of a cube $Q$ is written as $\ell(Q)$, and its center as $z_Q$. 
The lattice of dyadic cubes of side length $2^{-j}$ is
denoted by $\DD_j(\R^n)$. 
Also, given $a>0$ and any cube $Q$, we denote by $a\,Q$ the cube concentric with $Q$ with side length $a\,
\ell(Q)$.

Recall that any open subset $\Omega\subset\R^n$ can be decomposed in the so called Whitney cubes, as follows:
$$\Omega = \bigcup_{k=1}^{\infty} Q_k, $$
where $Q_k$ are disjoint dyadic cubes (the ``Whitney cubes'') such
that for some constants $\rho>20$ and $D_0\geq1$ the following holds,
\begin{itemize}
\item[(i)] $5Q_k \subset \Omega$.
\item[(ii)] $\rho Q_k \cap \Omega^{c} \neq \varnothing$.
\item[(iii)] For each cube $Q_k$, there are at most $D_0$ squares $Q_j$
such that $5Q_k \cap 5Q_j \neq \varnothing$. Moreover, for such squares $Q_k$, $Q_j$, we have 
$\frac12\ell(Q_k)\leq
\ell(Q_j)\leq 2\,\ell(Q_k)$.
\end{itemize}
We will denote by $\WW(\Omega)$ the family $\{Q_k\}_k$ of Whitney cubes of $\Omega$.

If $\Omega\subset\C$ is a Lipschitz domain, then $\partial\Omega$ is a chord arc curve.
Recall that a chord arc curve is just the bilipschitz image of 
a circumference. Then one can define a family $\DD(\partial\Omega)$ of ``dyadic'' arcs  which play the same role
as the dyadic intervals in $\R$: for each $j\in\Z$ such that $2^{-j}\leq \HH^1(\partial\Omega)$, $\DD_j(\partial\Omega)$ is a partition of
$\partial \Omega$ into pairwise disjoint arcs of length $\approx2^{-j}$, and 
$\DD(\partial\Omega)=\bigcup_j \DD_j(\partial\Omega)$. As in the case of $\DD(\R^n)$, two
arcs from $\DD(\partial\Omega)$ either are disjoint or one contains the other. 

If $\Omega$ is a special Lipschitz domain, that is, 
$\Omega=\{(x,y)\in\C:\,y>A(x)\}$, where $A:\R\to\R$ is a Lipschitz function, there exists
an analogous family $\DD(\partial
\Omega)$. In this case, setting $T(x)=(x,A(x))$, one can take $\DD(\partial\Omega)=T(\DD(\R))$, 
for instance.

If $\Omega$ is either a Lipschitz or a special Lipschitz domain, to each
$Q\in\WW(\Omega)$ we assign a cube $\phi(Q)\in\DD(\partial \Omega)$ such that $\phi(Q)\cap\rho Q\neq
\varnothing$ and $\diam(\phi(Q))\approx\ell(Q)$.
So there exists  some big constant $M$ depending on the parameters of the Whitney decomposition and on the chord arc constant of $\partial \Omega$ such that
$$\phi(Q)\subset M\,Q,\qquad \text{and}\qquad Q\subset B(z,M\ell(\phi(Q)))\quad
\mbox{for all $z\in\phi(Q)$}.$$
From this fact, it easily follows that there exists some constant $c_2$ such that for every $Q\in
\WW(\Omega)$,
$$\#\{P\in\DD(\partial\Omega):\,P=\phi(Q)\}\leq c_2.$$

Conversely, to each
$Q\in\DD(\partial\Omega)$ we assign a square $\psi(Q)\in\WW(\Omega)$ such that $\diam(\psi(Q))
\approx \dist(Q,\psi(Q))\approx \ell(Q)$. One may think of $\psi$ as a kind of inverse of $\phi$.
As above,
 there exists some constant $c_3$ such that for every $Q\in
\DD(\partial\Omega)$,
$$\#\{P\in\WW(\partial\Omega):\,P=\psi(Q)\}\leq c_3.$$


\subsection{Sobolev spaces} \label{subsec2.2}

Recall that for an open domain $\Omega\subset \R^n$, $1\leq p<\infty$, and a positive integer $m$, the Sobolev space $W^{m,p}(\Omega)$
consists of the functions $f\in L^1_{loc}(\Omega)$ such that
$$\|f\|_{W^{m,p}(\Omega)} = \biggl(\,\sum_{0\leq|\alpha|\leq m} \|D^\alpha f\|_{L^p(\Omega)}^p\biggr)^{1/p}
<\infty,$$
where $D^\alpha f$ is the $\alpha$-th derivative of $f$, in the sense of distributions. 
The homogeneous Sobolev seminorm $\dot W^{m,p}$ is defined by
$$\|f\|_{\dot W^{m,p}(\Omega)} :=   \biggl(\,\sum_{|\alpha|=m} \|D^\alpha f\|_{L^p(\Omega)}^p\biggr)^{1/p}.$$

For a non
integer $0<\alpha<1$, one sets
$$D^{\alpha}f(x)=\left(\int_{\Omega}\frac{|f(x)-f(y)|^{2}}{|x-y|^{n+2\alpha}}\,dm(y)\right)^{\frac{1}{2}},$$
and then
$$\|f\|_{W^{\alpha,p}(\Omega)} = \biggl(\|f\|_{L^p(\Omega)}^p + \|D^\alpha f\|_{L^p(\Omega)}^p \biggr)^{1/p}.$$
See \cite{Strich}, for example.
The homogeneous Sobolev seminorm $\dot W^{\alpha,p}(\Omega)$ equals
$$\|f\|_{\dot W^{\alpha,p}(\Omega)} =  \|D^\alpha f\|_{L^p(\Omega)}.$$


\subsection{Besov spaces} \label{subsec2.3}
In this section we review some basic results concerning Besov spaces. We pay special attention to the homogeneous Besov spaces $\dot B_{p,p}^\alpha$, with $0<\alpha<1$.

Consider a radial $\CC^\infty$ function $\eta:\R^n\to\R^n$ whose Fourier transform $\wh \eta$ is supported in the annulus
$A(0,1/2,3/2)$, such that setting $\eta_{k}(x)=\eta_{2^{-k}}(x) = 2^k\,\eta(2^k\,x)$,
\begin{equation}\label{eqnu3}
\sum_{k\in\Z} \wh{\eta_{k}}(\xi)=1\qquad \mbox{for all $\xi\neq0$.}
\end{equation}
Then, for $f\in L^1_{loc}(\R^n)$, $1\leq p,q<\infty$, and $\alpha>0$, one defines the seminorm 
$$\|f\|_{\dot{B}_{p,q}^{\alpha}}= \left( \sum_{k\in\Z}\|2^{k\alpha}\eta_{k}*f\|_p^q\right)^{1/q},$$
and the norm 
$$\|f\|_{{B}_{p,q}^{\alpha}}=\|f\|_p +\|f\|_{\dot{B}_{p,q}^\alpha}.$$
The homogeneous Besov space $\dot{B}_{p,q}^{\alpha}\equiv \dot{B}_{p,q}^{\alpha}(\R^n)$ consists of the functions such that $\|f\|_{\dot{B}_{p,q}^{\alpha}}<\infty$, while the functions in the Besov space ${B}_{p,q}^{\alpha}\equiv {B}_{p,q}^{\alpha}(\R^n)$ 
are those such that $\|f\|_{{B}_{p,q}^{\alpha}}<\infty$.
If one chooses a function different from $\eta$ which satisfies the same properties as $\eta$
above, then one obtains an equivalent seminorm and norm, respectively.

Given $f\in L^1_{loc}(\R^n)$ and $h>0$, denote $\Delta_h(f)(x)=f(x+h)-f(x)$.
For $1\leq p,q<\infty$ and $0<\alpha<1$, it turns out that
\begin{equation}\label{eqgg0}
\|f\|_{\dot{B}_{p,q}^\alpha}^p \approx
\int_{\R^n} \frac{\|\Delta_h(f)\|_q^p}{|h|^{\alpha p + n}}\,dm(h),
\end{equation}
assuming $f$ to be compactly supported, say. Otherwise the comparability is true modulo polynomials, that
is, above one should replace $\|\Delta_h(f)\|_q$ by 
$$\inf_{p \text{ polynomial}} \|\Delta_h(f+p)\|_q.$$
See \cite[p.\ 242]{Trieble}, for instance. Analogous characterizations hold for Besov spaces with regularity $\alpha\geq1$. 
In this case it is necessary to use differences of higher order.

Observe that, for $p=q$ and $0<\alpha<1$, one has
\begin{equation}\label{eqbpp1}
\|f\|_{\dot{B}_{p,p}^\alpha}^p \approx
\iint_{\R^n\times\R^n} \frac{|\Delta_h(f)|^p}{|h|^{\alpha p + n}}\,dm(h)\,dm(x) =
\iint_{\R^n\times\R^n} \frac{|f(x)-f(y)|^p}{|x-y|^{\alpha p + n}}\,dm(x)\,dm(y).
\end{equation}
This fact motivates the definition of the $\dot{B}_{p,p}^\alpha$-seminorm 
over domains in $\R^n$. Given an open set $\Omega\in\R^n$, one sets
\begin{equation}\label{eqdh4}
\|f\|_{\dot{B}_{p,p}^\alpha(\Omega)}^p=
\iint_{(x,y)\in \Omega^2}\frac{|f(x)-f(y)|^p}{|x-y|^{\alpha p + n}} \,dm(x)\,dm(y),
\end{equation}
and $\|f\|_{{B}_{p,p}^\alpha(\Omega)}= \|f\|_{L^p(\Omega)}+\|f\|_{\dot{B}_{p,p}^\alpha(\Omega)}$.
See \cite{Dispa}.
Analogously, if $\Gamma$ is a chord arc curve or a Lipschitz graph, one defines
\begin{equation}\label{eqdh5}
\|f\|_{\dot{B}_{p,p}^\alpha(\Gamma)}^p=
\iint_{(x,y)\in \Gamma^2}\frac{|f(x)-f(y)|^p}{|x-y|^{\alpha p + 1}} \,d\HH^1(x)\,d\HH^1(y),
\end{equation}
and $\|f\|_{{B}_{p,p}^\alpha(\Gamma)}= \|f\|_{L^p(\HH^1\rest\Gamma)}+\|f\|_{\dot{B}_{p,p}^\alpha(\Gamma)}$.

Concerning the Besov spaces of regularity $1<\alpha<2$, let us remark that, for $f\in L^1_{loc}(\R)$, 
\begin{equation}\label{eqn67}
\|f\|_{\dot{B}_{p,q}^\alpha}^p \approx \|f'\|_{\dot{B}_{p,q}^{\alpha-1}}^p,
\end{equation}
where $f'$ is the distributional derivative of $f$. Further we will use a  
characterization in terms of the coefficients $\beta_1$ due to Dorronsoro.
Recall the definition in \rf{defbeta}.
 In \cite[Theorems 1 and 2]{Dorronsoro2} it is shown
 that, for $1\leq\alpha<2$ and $1\leq p,q<\infty$, one has:
$$\|f\|_{\dot{B}_{p,q}^\alpha} \approx \left(\int_0^\infty \left(h^{-\alpha+1} 
\|\beta_1(f,I(\cdot,h))\|_p\right)^q\,\frac{dh}h\right)^{1/q}.$$
Again, this comparability should be understood modulo polynomials, unless $f$ is compactly supported, say.
In the case $p=q$, an equivalent statement is the following:
$$\|f\|_{\dot{B}_{p,p}^\alpha}^p \approx \biggl(\sum_{I\in\DD(\R)}  
\biggl(\frac{\beta_1(f,I)}{\ell(I)^{\alpha-1}}\biggr)^p\,\ell(I)\biggr)^{1/p}.$$
For other dimensions $n\neq1$ and other indices $\alpha\not\in [1,2)$, there are analogous results which involve approximation by polynomials
of a fixed degree instead of affine functions, which we skip for the sake of 
simplicity. 

Let us remark that the coefficients $\beta_1(f,I)$ are not introduced explicitly in \cite{Dorronsoro2}, and
instead a different notation is used there. 

Finally we recall the relationship between the seminorms $\|\cdot\|_{\dot W^{\alpha,p}(\Omega)}$
and $\|\cdot\|_{\dot B^\alpha_{p,p}(\Omega)}$ in Lipschitz domains. 
We have
$$\|f\|_{\dot W^{\alpha,p}(\Omega)}\lesssim \|f\|_{\dot B^\alpha_{p,p}(\Omega)}
\qquad \mbox{if $1< p\leq2$,}$$
and 
$$\|f\|_{\dot B^\alpha_{p,p}(\Omega)}\lesssim \|f\|_{\dot W^{\alpha,p}(\Omega)}
\qquad \mbox{if $2\leq p<\infty$.}$$


\section{Auxiliary lemmas}\label{sec3}

\begin{lemma}\label{lemanorm}
Let $A:\R\to\R$ be a Lipschitz function with $\|A'\|_\infty\leq c_0$ and $\Gamma\subset\C$ its graph. Denote by $N_0(x)$ the unit
normal of $\Gamma$ at $(x,A(x))$ (whose vertical component is negative, say), which is defined a.e.
Then, 
\begin{equation}\label{eqn62}
|\Delta_h (A')(x)|\approx |\Delta_h N_0(x)|,
\end{equation}
with constants depending on $c_0$.
Thus, for $1\leq p<\infty$ and $0<\alpha<1$,
\begin{equation}\label{eqn63}
\|A\|_{\dot{B}_{p,p}^{\alpha+1}}\approx \|A'\|_{\dot{B}_{p,p}^{\alpha}}\approx\|N_0\|_{\dot{B}_{p,p}^{\alpha}},
\end{equation}
with constants depending on $\alpha$ and $p$, and also on $c_0$ in the second estimate.
\end{lemma}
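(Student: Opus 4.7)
The plan is to realize the normal $N_0$ as the composition of the slope $A'$ with a fixed smooth nonlinear map, and to exploit that this map is bilipschitz on bounded intervals. From the definition of the outward unit normal to the graph of $A$ one has $N_0(x) = F(A'(x))$ for a.e.\ $x$, where
$$F:\R\to\R^2,\qquad F(t)=\frac{(t,-1)}{\sqrt{1+t^2}}.$$
A direct computation gives $F_1'(t)=(1+t^2)^{-3/2}$ and $F_2'(t)=t(1+t^2)^{-3/2}$, hence $|F'(t)|=(1+t^2)^{-1}$, which is bounded above by $1$ and below by $(1+c_0^2)^{-1}$ on $[-c_0,c_0]$.

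The next step is to verify that $F$ restricted to $[-c_0,c_0]$ is bilipschitz with constants depending only on $c_0$. The upper bound $|F(t)-F(s)|\leq|t-s|$ is immediate from $|F'|\leq1$. For the lower bound, I would observe that the first component alone is already bilipschitz: since $F_1'(t)\geq(1+c_0^2)^{-3/2}>0$ on $[-c_0,c_0]$, one has
$$|F(t)-F(s)|\geq|F_1(t)-F_1(s)|\geq(1+c_0^2)^{-3/2}\,|t-s|.$$
Applying these two inequalities pointwise with $s=A'(x)$ and $t=A'(x+h)$ (which both lie in $[-c_0,c_0]$ since $\|A'\|_\infty\leq c_0$) yields the pointwise comparison \rf{eqn62}.

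To derive \rf{eqn63}, the first equivalence $\|A\|_{\dot B_{p,p}^{\alpha+1}}\approx\|A'\|_{\dot B_{p,p}^\alpha}$ is a direct instance of \rf{eqn67} applied to $A$. The remaining equivalence $\|A'\|_{\dot B_{p,p}^\alpha}\approx\|N_0\|_{\dot B_{p,p}^\alpha}$ follows by raising \rf{eqn62} to the $p$-th power, integrating against $|h|^{-\alpha p-1}\,dx\,dh$, and invoking the first-order difference characterization \rf{eqbpp1} of the homogeneous Besov seminorm.

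A minor technical point to address is that $A'$ and $N_0$ need not be compactly supported, so \rf{eqbpp1} strictly holds only modulo polynomials; however, adding a constant to $A'$ or to $N_0$ leaves $\Delta_h$ unchanged, so the pointwise bound \rf{eqn62} transfers to the seminorms without loss. I do not anticipate any serious obstacle here: the argument amounts to the observation that the Gauss map of a Lipschitz graph is a smooth bilipschitz function of the slope, combined with the standard difference characterization of Besov spaces.
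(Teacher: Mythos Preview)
Your proof is correct and follows the same overall strategy as the paper: both establish the pointwise bilipschitz equivalence \rf{eqn62} between $\Delta_h(A')$ and $\Delta_h N_0$, and then read off the Besov seminorm comparison from the first-difference characterization together with \rf{eqn67}.

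The execution differs slightly. You package the argument by writing $N_0 = F\circ A'$ with $F(t)=(t,-1)/\sqrt{1+t^2}$, computing $|F'|=(1+t^2)^{-1}$, and using monotonicity of $F_1$ to get the lower Lipschitz bound on $[-c_0,c_0]$. The paper instead works directly with algebraic identities for $\Delta_h$ of products, reciprocals, and $\sqrt{1+f^2}$ (Leibniz-type rules for finite differences), bounding each component of $N_0$ separately in both directions. Your route is a bit more conceptual and slightly shorter; the paper's is purely algebraic and avoids the mean value theorem. Either way the constants depend only on $c_0$, and the conclusion is identical.
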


Above, we set
$$\|N_0\|_{\dot{B}_{p,p}^{\alpha}}:= 
\|N_{0,1}\|_{\dot{B}_{p,p}^{\alpha}} + \|N_{0,2}\|_{\dot{B}_{p,p}^{\alpha}},$$
where $N_{0,i}$, $i=1,2$, are the components of $N_0$.

\begin{proof}
Notice that the first estimate in \rf{eqn63}
is just a restatement of \rf{eqn67}, and the second one follows from \rf{eqn62} and 
the characterization of $\dot{B}_{p,p}^{\alpha}$ in terms of differences in \rf{eqgg0}. So we only have to prove \rf{eqn62}. 

Recall that 
$$N_0(x) = (1+A'(x)^2)^{-1/2}\,\bigl(A'(x),-1\bigr).$$
We will show first the inequality $|\Delta_h N(x)|\lesssim |\Delta_h (A')(x)|$.
Notice that, for arbitrary functions $f,g:\R\to\R$ and $h>0$,
\begin{equation}\label{eqgg0.5}
\Delta_h(f\,g)(x) = f(x)\,\Delta_h g(x) + g(x+h)\,\Delta_h f(x),
\end{equation}
and thus
\begin{equation}\label{eqgg1}
|\Delta_h(f\,g)| \leq \|f\|_\infty\,|\Delta_h g| + \|g\|_\infty\,|\Delta_h f|.
\end{equation}
Also, it is easy to check that 
$$\Delta_h\biggl(\frac1f\biggr)(x) = \frac{-\Delta_h f(x)}{f(x+h)\,f(x)},$$
and so
\begin{equation}\label{eqgg2}
\left|\Delta_h\biggl(\frac1f\biggr)\right| \leq \left\|\,\frac1f\,\right\|_\infty^2 \,|\Delta_h f|.
\end{equation}
On the other hand,
$$\Delta_h\left(\sqrt{1+f^2}\right)(x) = \frac{\bigl(f(x+h)+f(x)\bigr)\,\,\Delta_h f(x)}{
\sqrt{1+f(x+h)^2} +\sqrt{1+f(x)^2}},$$
and thus it follows that
\begin{equation}\label{eqgg3}
\left|\Delta_h\left(\sqrt{1+f^2}\right)(x)\right| \leq |\Delta_h f(x)|.
\end{equation}

From \rf{eqgg2} and \rf{eqgg3} we infer that 
\begin{equation}\label{eqgg4}
|\Delta_h N_{0,2}(x)|=
\left|\Delta_h\left((1+A'(x)^2)^{-1/2}\right)\right| \leq |\Delta_h (A')(x)|.
\end{equation}
 Also, from \rf{eqgg4} and \rf{eqgg1}, taking into account that 
$\|A'\|_\infty\leq c_0$, we deduce that 
$$|\Delta_h N_{0,1}(x)|= \left|\Delta_h\left(A'(x)\,(1+A'(x)^2)^{-1/2}\right)\right|\leq (c_0+1)\,|\Delta_h (A')(x)|.$$

Let us see now that $|\Delta_h (A')(x)|\lesssim |\Delta_h N_0(x)|$.
 From \rf{eqgg2}, we infer that
$$\left|\Delta_h\left(\sqrt{1+A'(x)^2}\right)\right| \leq (1+c_0^2)\,|\Delta_h (N_{0,2})(x)|.$$
Finally, since $A'=N_{0,1}\sqrt{1+(A')^2}$, using \rf{eqgg1} we get
\begin{align*}
|\Delta_h(A')(x)|&\leq \textstyle\sqrt{1+c_0^2}\,|\Delta_h(N_{0,1})(x)| + \left|\Delta_h\left(\sqrt{1+A'(x)^2}\right)\right|\\
&\leq \textstyle\sqrt{1+c_0^2}\,|\Delta_h(N_{0,1})(x)|+ (1+c_0^2)\,|\Delta_h (N_{0,2})(x)|,
\end{align*}
as wished.
\end{proof}

\begin{rem}
From the characterization of Besov spaces in terms of differences, it turns
out that 
if $N(z)$ stands for the unit normal at $z\in\Gamma$ (with a suitable orientation), 
then
$$\|N_0\|_{\dot{B}_{p,p}^\alpha}\approx\|N\|_{\dot{B}_{p,p}^\alpha(\Gamma)}$$
for $1\leq p<\infty$ and $0<\alpha<1$.
\end{rem}

Recall that in \rf{defbeta} we defined the coefficients $\beta_1$ associated to
a function $f$. Now we introduce an analogous notion replacing 
$f$ by a chord arc curve $\Gamma$ (which may be the boundary
of a Lipschitz domain). Given $P\in\DD(\Gamma)$, we set
\begin{equation}\label{defbetag} 
\beta_1(\Gamma,P) = \inf_L \frac1{\ell(P)}\int_{3P} \frac{\dist(x,L)}{\ell(P)}\,
d\HH^1(x),
\end{equation}
where the infimum is taken over all the lines $L\subset\C$.

Next lemma is a direct consequence of our previous results and the characterization of
homogeneous Besov spaces in terms of the $\beta_1$'s from Dorronsoro, described
in the preceding section.

\begin{lemma}\label{lemdorron}
Let $\Omega$ be a Lipschitz domain. Suppose that the outward unit normal satisfies
$N\in \dot{B}_{p,p}^\alpha(\partial\Omega)$, for some $1\leq p<\infty$, $0<\alpha<1$.
Then,
$$\sum_{P\in\DD(\partial\Omega)}  
\biggl(\frac{\beta_1(\partial\Omega,P)}{\ell(P)^{\alpha}}\biggr)^p\,\ell(P)
\lesssim \|N\|_{\dot{B}_{p,p}^\alpha(\partial\Omega)}^p + c\,\HH^1(\partial\Omega)^{1-\alpha\,p}.
$$
with $c$ depending on $\HH^1(\partial\Omega)/R$.
\end{lemma}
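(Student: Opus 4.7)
The plan is to combine three ingredients: (i) Dorronsoro's $\beta_1$-characterization of $\dot{B}_{p,p}^{1+\alpha}(\R)$ recalled in Section \ref{sec2}; (ii) the geometric comparison $\beta_1(\partial\Omega,P)\approx\beta_1(A,I_P)$ between flatness of the boundary and flatness of the function defining its local graph parametrization; and (iii) Lemma \ref{lemanorm} together with the Remark that $\|N_0\|_{\dot{B}_{p,p}^\alpha(\R)}\approx \|N\|_{\dot{B}_{p,p}^\alpha(\Gamma)}$ for a Lipschitz graph $\Gamma$.

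As a warm-up I would first treat the (simpler) special Lipschitz case $\partial\Omega=\{(x,A(x)):x\in\R\}$. The bilipschitz map $T(x)=(x,A(x))$ puts the arcs $P=T(I)$ in one-to-one correspondence with the intervals $I\in\DD(\R)$, with $\ell(P)\approx\ell(I)$. Because $\|A'\|_\infty\leq\delta$, any near-optimal approximating line in $\beta_1(\partial\Omega,T(I))$ must be nearly horizontal and hence the graph of an affine function $\rho$; a direct computation using the distance formula from a point to a line then gives $\beta_1(\partial\Omega,T(I))\approx \beta_1(A,I)$. Dorronsoro at regularity $1+\alpha\in[1,2)$, combined with Lemma \ref{lemanorm} and the Remark, yields
$$\sum_{P\in\DD(\partial\Omega)}\biggl(\frac{\beta_1(\partial\Omega,P)}{\ell(P)^\alpha}\biggr)^p\ell(P)\approx \|A\|_{\dot{B}_{p,p}^{1+\alpha}(\R)}^p\approx \|A'\|_{\dot{B}_{p,p}^{\alpha}(\R)}^p\approx \|N\|_{\dot{B}_{p,p}^\alpha(\partial\Omega)}^p,$$
with no error term.

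For a bounded $(\delta,R)$-Lipschitz domain, I would cover $\partial\Omega$ by a family $\{B(z_k,R/C)\}_{k=1}^K$ of balls with bounded overlap and $K\approx\HH^1(\partial\Omega)/R$, each admitting a Lipschitz graph parametrization $A_k$ on $B(z_k,R)$, and split the sum into \emph{small} arcs $\ell(P)\leq c_1 R$ and \emph{large} arcs $\ell(P)>c_1 R$. Each small arc $P$ has $3P$ sitting inside some $B(z_k,R/2)$, so the special-case reasoning applies locally: after extending $A_k$ to all of $\R$ (by reflection across the endpoints of the local interval, say), the contribution from the patch $k$ is controlled by $\|N\|_{\dot{B}_{p,p}^\alpha(\partial\Omega\cap B(z_k,R))}^p$. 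Summing over $k$ and invoking bounded overlap (so that $\sum_k\chi_{B_k}(x)\chi_{B_k}(y)\lesssim 1$, which collapses the sum into a single integral over pairs at distance $\lesssim R$) yields the main term $\lesssim \|N\|_{\dot{B}_{p,p}^\alpha(\partial\Omega)}^p$. For large arcs the chord-arc property gives the trivial bound $\beta_1(\partial\Omega,P)\lesssim 1$, so each summand is $\lesssim\ell(P)^{1-\alpha p}$; with $\approx\HH^1(\partial\Omega)\cdot 2^k$ arcs of size $2^{-k}$, summing the resulting geometric series over scales $c_1R\lesssim 2^{-k}\lesssim\HH^1(\partial\Omega)$ produces a bound of the form $(\HH^1(\partial\Omega)/R)^{\alpha p}\HH^1(\partial\Omega)^{1-\alpha p}$, matching the stated error term with $c$ depending on $\HH^1(\partial\Omega)/R$.

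The main technical obstacle is the extension step in the small-arc analysis: to apply Dorronsoro on $\R$ we need the extended function $\widetilde A_k$ to have $\dot{B}_{p,p}^{1+\alpha}(\R)$-seminorm controlled by the intrinsic local quantity $\iint_{(\partial\Omega\cap B(z_k,R))^2}|N(x)-N(y)|^p|x-y|^{-\alpha p-1}\,d\HH^1(x)\,d\HH^1(y)$, so that summation over $k$ telescopes cleanly back to the global Besov seminorm. A standard reflection or cutoff-type extension accomplishes this, but one must verify that the boundary artifacts introduced by the extension are of smaller order than the explicit error term $c\,\HH^1(\partial\Omega)^{1-\alpha p}$; this is the only nonroutine estimate.
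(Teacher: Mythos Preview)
Your approach is essentially the same as the paper's: localize via a finite covering by balls of radius comparable to $R$, pass to a local Lipschitz graph parametrization, extend to all of $\R$, and invoke Dorronsoro's theorem together with Lemma \ref{lemanorm}. The paper carries this out with one notable difference in the extension step: instead of extending $A_k$ by reflection, it sets $\wt A = \vphi\,A$ with a smooth cutoff $\vphi$ supported in the local chart. The artifact produced by the cutoff is then computed explicitly: the extra term in $\Delta_h(\wt A')$ coming from $\vphi$ contributes $\|\vphi\|_{\dot B^\alpha_{p,p}}^p\lesssim R^{1-\alpha p}$, and the tail $|h|>R/4$ contributes the same order, which is exactly the error term in the statement. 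This is more direct than verifying that a reflection preserves $\dot B^{1+\alpha}_{p,p}$ regularity (which, since $1+\alpha>1$, requires matching first derivatives at the reflection point and is a bit delicate). The paper also takes the cruder route of bounding each local sum by the \emph{global} quantity $\|N\|_{\dot B^\alpha_{p,p}(\partial\Omega)}^p + \HH^1(\partial\Omega)^{1-\alpha p}$ and then summing over the $m\approx \HH^1(\partial\Omega)/R$ charts, which is why the final constant depends on $\HH^1(\partial\Omega)/R$; your bounded-overlap argument summing local Besov norms is a refinement but not needed given the statement allows this dependence. Your explicit treatment of large arcs via a geometric series is correct and in fact more careful than the paper, which simply observes that every arc with $\ell(3P)<R/4$ lies in some chart ball and implicitly absorbs the finitely many remaining scales into the constant.
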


\begin{proof}
Let $\delta,R>0$ be such that $\Omega$ is a $(\delta,R)$-Lipschitz domain.
Consider  a finite covering of $\partial\Omega$ by a family of balls $\{B(x_i,R/4)\}_{1\leq i \leq m}$, with $x_i\in\partial \Omega$. Notice that for any cube $P\in\DD(\partial \Omega)$ with
$\ell(3P)< R/4$ there exists some ball $B(x_i,R/2)$ containing $P$.
Thus, to prove the lemma it is enough to see
that, for each $i$,
\begin{equation}\label{eqdh566}
\sum_{P\in\DD(\partial\Omega):P\subset B(x_i,R/2)} 
\biggl(\frac{\beta_1(\partial\Omega,P)}{\ell(P)^{\alpha}}\biggr)^p\,\ell(P)
\lesssim \|N\|_{\dot{B}_{p,p}^\alpha(\partial\Omega)}^p+\HH^1(\partial\Omega)^{1-\alpha\,p}.
\end{equation}

So fix $i$ with $1\leq i \leq m$ and let $A:\R\to\R$ be a Lipschitz functions such that, after
a suitable rotation, 
$$\Omega\cap B(x_i,R)=\{(x,y)\in B(x_i,R):\,y>A(x)\}.$$
Moreover we may assume that $A(x_i)=0$ and that $\supp A\subset [-2R,2R]$.
Let $\vphi:\R\to\R$ be a $\CC^\infty$ function which equals $1$ on $[-R/2,R/2]$ and
vanishes on $\C\setminus [-3R/4,3R/4]$. Consider the function $\wt A=\vphi \,A$. 
From \rf{eqgg0.5} and \rf{eqn62} we deduce that
$$
|\Delta_h(\wt A')| \leq \vphi |\Delta_h A'| + \|A'\|_\infty\,|\Delta_h \vphi|
\leq \chi_{[-3R/4,3R/4]} \,\bigl|\Delta_h N(x,A(x))\bigr| + c\,|\Delta_h \vphi|
.$$
Notice also that, for $|h|\leq R/4$,
$$\supp\Bigl(\chi_{[-3R/4,3R/4]} \,\Delta_h N(\cdot,A(\cdot))\Bigr) \subset [-R,R].$$
As a consequence, $(x,A(x))\in\partial\Omega$ for $x$ belonging to the support above, and so 
we get 
\begin{align*}
\iint_{|h|\leq  R/4}
\frac{|\Delta_h(\wt A')|^p}{|h|^{\alpha p + 1}} \,dx\,dh & \lesssim 
\iint_{(\partial\Omega)^2}
\frac{|N(x)-N(y)|^p}{|x-y|^{\alpha p + 1}} \,d\HH^1(x)\,d\HH^1(y)\\
& \quad +
\iint
\frac{|\Delta_h\vphi|^p}{|h|^{\alpha p + 1}} \,dx\,dh \\
&\approx
\|N\|_{\dot{B}_{p,p}^\alpha(\partial\Omega)}^p + \|\vphi\|_{\dot{B}_{p,p}^\alpha}^p
.
\end{align*}
It is easy to check that 
$$\|\vphi\|_{\dot{B}_{p,p}^\alpha}^p\lesssim R^{1-\alpha\,p}.$$
Taking into account that $\|(\wt A)'\|_\infty\leq c$ and that $\wt A'$ vanishes out of $[-R,R]$, we deduce
\begin{align*}
\iint_{|h|>  R/4}
\frac{|\Delta_h(\wt A')|^p}{|h|^{\alpha p + 1}} \,dx\,dh& \lesssim
\int_{|x|\leq R} \int_{|h|>  R/4}
\frac{1}{|h|^{\alpha p + 1}} \,dx\,dh \\
&\quad + 
\int_{|x+h|\leq R} \int_{|h|>  R/4}
\frac{1}{|h|^{\alpha p + 1}} \,dx\,dh \\
&\lesssim R^{1-\alpha\,p}.
\end{align*}
Therefore, $ \|\wt A\|_{\dot{B}_{p,p}^{\alpha+1}}^p\approx\|(\wt A)'\|_{\dot{B}_{p,p}^\alpha}^p\lesssim \|N\|_{\dot{B}_{p,p}^\alpha(\partial\Omega)}^p+ R^{1-\alpha\,p}$.
Thus, from Dorronsoro's theorem, we  get
$$\sum_{Q\in\DD(\R)} 
\biggl(\frac{\beta_1(\wt A,Q)}{\ell(Q)^{\alpha}}\biggr)^p\,\ell(Q)
\lesssim \|N\|_{\dot{B}_{p,p}^\alpha(\partial\Omega)}^p+ R^{1-\alpha\,p}.$$
Since the graph of $\wt A$ coincides with $\partial \Omega$ on $B(x_i,R) \cap 
\bigl([-R/2,R/2]\times\R\bigr)$, we get
\begin{align*}
\sum_{P\in\DD(\partial\Omega):P\subset B(x_i,R/2)} 
\biggl(\frac{\beta_1(\partial\Omega,P)}{\ell(P)^{\alpha}}\biggr)^p\,\ell(P)
&\lesssim \|N\|_{\dot{B}_{p,p}^\alpha(\partial\Omega)}^p+R^{1-\alpha\,p}\\& =
\|N\|_{\dot{B}_{p,p}^\alpha(\partial\Omega)}^p+c_1\,\HH^1(\partial \Omega)^{1-\alpha\,p},
\end{align*}
with $c_1= R^{1-\alpha\,p}/\HH^1(\partial\Omega)^{1-\alpha\,p}$. So \rf{eqdh566} holds and we
are done.
\end{proof}

\vvv
\begin{rem}\label{rem11}
Given $0<\alpha<1$, for a Lipschitz domain, from the definition \rf{eqdh5}, it is easy to deduce that
$$
\|N\|_{\dot{B}_{p,p}^\alpha(\partial\Omega)}^p \gtrsim\HH^1(\partial\Omega)^{1-\alpha\,p}.$$
So, in fact we have
$$\sum_{P\in\DD(\partial\Omega)}  
\biggl(\frac{\beta_1(\partial\Omega,P)}{\ell(P)^{\alpha}}\biggr)^p\,\ell(P)
\lesssim \|N\|_{\dot{B}_{p,p}^\alpha(\partial\Omega)}^p.
$$
\end{rem}



\vvv
\section{Preliminary lemmas for the Theorem \ref{teo}}\label{sec4}

Let $\Omega\subset\C$ be an open set. If $\Omega$ has finite Lebesgue measure, then
\begin{equation}\label{eqbeurling21}
B\chi_\Omega(z)=\lim_{\ve\to0}\frac{-1}\pi\int_{|z-w|>\ve}\frac{1}{(z-w)^2}
\chi_\Omega(w)\,dm(w).
\end{equation}
Otherwise, $B(\chi_\Omega)$ is a BMO function and,
thus, it is defined modulo constants. Actually, a possible way to assign a precise value to $B(\chi_
\Omega)(z)$
is the following:
\begin{equation}\label{eqbeurling22}
B\chi_\Omega(z)=\lim_{\ve\to0}\frac{-1}\pi\int_{|z-w|>\ve}\left(\frac{1}{(z-w)^2} - 
\frac1{(z_0-w)^2}\right)
\chi_\Omega(w)\,dm(w),
\end{equation}
where $z_0$ is some fixed point, with $z_0\not\in\overline\Omega$, for example. It is easy to check
that the preceding principal value integral exists for all $z\in\C$ and that, moreover, it
is analytic in $\C\setminus \partial\Omega$.

\begin{lemma}
Let $\Omega\subset\C$ be an open set.
The function $B(\chi_\Omega)$ is analytic in $\C\setminus\partial\Omega$ and moreover, for any $z\in
\C\setminus\partial\Omega$, we have
\begin{equation}\label{eq**}
\partial B(\chi_\Omega)(z) = \frac2\pi \int_{|z-w|>\ve}\frac1{(z-w)^3}\,\chi_{\Omega}(w)\,dm(w),
\end{equation}
for $0<\ve<\dist(z,\partial \Omega)$. 
\end{lemma}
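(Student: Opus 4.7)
The plan is to localize the computation near the point $z$ by splitting $\chi_\Omega$ into a piece supported far from $z$ and a piece supported on a small disc around $z$, so that analyticity and the derivative formula both follow from differentiating under the integral sign combined with the known behavior of $B$ on a disc. Concretely, I would fix $z\in\C\setminus\partial\Omega$ and choose $\ve_0>0$ with $2\ve_0<\dist(z,\partial\Omega)$, so that $B(z,2\ve_0)$ lies entirely in $\Omega$ or entirely in $\C\setminus\overline\Omega$, and then write $\chi_\Omega=\chi_{\Omega\setminus B(z,\ve_0)}+\chi_{\Omega\cap B(z,\ve_0)}$. My goal is to show that on the smaller disc $B(z,\ve_0/2)$ each summand produces a well-defined analytic contribution, and then read off the derivative at $z'=z$.

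For the near piece $\chi_{\Omega\cap B(z,\ve_0)}$, the set is either $\emptyset$ or the full ball $B(z,\ve_0)$. In the second case I would appeal to the well-known fact, already recalled in the introduction, that $B(\chi_{B(z,\ve_0)})$ vanishes identically inside $B(z,\ve_0)$. So in either case this piece contributes only a constant on $B(z,\ve_0/2)$ (identically zero in the bounded convention \rf{eqbeurling21}, at worst an additive constant coming from the $(z_0-w)^{-2}$ term in \rf{eqbeurling22} in the unbounded case), and such a constant is killed by $\partial_z$.

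For the far piece $\chi_{\Omega\setminus B(z,\ve_0)}$, I would observe that for any $z'\in B(z,\ve_0/2)$ and any $w\in\Omega\setminus B(z,\ve_0)$ one has $|z'-w|\ge\ve_0/2$, so the $\ve$-regularization in \rf{eqbeurling21} or \rf{eqbeurling22} becomes inactive as soon as $\ve<\ve_0/2$. This yields the absolutely convergent representation
$$B(\chi_{\Omega\setminus B(z,\ve_0)})(z')=-\frac1\pi\int_{\Omega\setminus B(z,\ve_0)}\left(\frac1{(z'-w)^2}-\frac1{(z_0-w)^2}\right)\,dm(w),$$
where the second term is dropped in the bounded case and otherwise produces an integrable tail of order $|w|^{-3}$ at infinity. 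The integrand is holomorphic in $z'$ with uniform bounds on $B(z,\ve_0/2)$, so I can differentiate under the integral; the subtracted kernel does not depend on $z'$ and drops out, giving both $\partial_{\bar z'}B(\chi_\Omega)(z')=0$ and the identity $\partial_{z'}B(\chi_\Omega)(z')=\frac{2}{\pi}\int_{\Omega\setminus B(z,\ve_0)}(z'-w)^{-3}\,dm(w)$ on $B(z,\ve_0/2)$.

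The last step is to replace $\ve_0$ by an arbitrary $\ve\in(0,\dist(z,\partial\Omega))$ in the right-hand side of \rf{eq**}. The difference is the integral of $(z-w)^{-3}$ over $\Omega\cap(B(z,\ve_0)\triangle B(z,\ve))$, which is either an annulus centered at $z$ (when $z\in\Omega$) or empty (when $z\notin\overline\Omega$); in either case the integral vanishes by the angular symmetry of the kernel $(z-w)^{-3}$, so \rf{eq**} is independent of $\ve$. The only mildly delicate point I foresee is the bookkeeping in the unbounded case, where one has to keep track of the fixed-point subtraction in \rf{eqbeurling22}; the observation that this subtraction is constant in $z'$, and hence invisible to $\partial_z$, reduces that point to routine verification.
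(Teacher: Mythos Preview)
Your argument is correct. The only soft spot is the bookkeeping in the infinite-measure case, and you have handled it: the $(z_0-w)^{-2}$ subtraction contributes an additive constant, the far-piece derivative $(z'-w)^{-3}$ is $O(|w|^{-3})$ at infinity and hence dominated uniformly for $z'\in B(z,\ve_0/2)$, and the annulus adjustment at the end is justified by the vanishing of $\int_{\text{annulus}}(z-w)^{-3}\,dm(w)$ via polar coordinates.

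Your route is genuinely different from the paper's. The paper never splits $\chi_\Omega$; instead it replaces the sharp cutoff $\chi_{|w|>\ve}$ by a smooth radial bump $\varphi$, writes $B\chi_\Omega(z)=-\frac1\pi(\varphi(w)/w^2)*\chi_\Omega(z)$, and then differentiates the kernel directly. The computation of $\partial(\varphi(w)/w^2)$ produces $-2\wt\varphi(w)/w^3$ with $\wt\varphi$ another radial cutoff of the same type, which collapses back to the truncated integral in \rf{eq**}; the computation of $\bar\partial(\varphi(w)/w^2)$ gives a compactly supported radial function divided by $w$, whose integral vanishes by angular symmetry, yielding analyticity. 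So the paper argues entirely at the level of the convolution kernel and is self-contained, whereas you import the identity $B(\chi_B)\equiv 0$ on $B$ as a black box and thereby avoid any explicit kernel calculus. Your version is shorter and more conceptual; the paper's version is more hands-on but does not rely on any outside formula. Both approaches extend to the general even homogeneous Calder\'on--Zygmund operators discussed in the final remark, since the vanishing $T\chi_B=0$ on $B$ holds in that generality as well.
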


When $\Omega$ has infinite measure, saying that $B(\chi_\Omega)$ is analytic in $\C\setminus\partial\Omega$ means that the function defined in \rf{eqbeurling22} is analytic for each choice of $z_0$.
Notice that, in any case, the derivative $\partial B(\chi_\Omega)$ is independent of $z_0$.

\begin{proof}
It is easy to check that $B(\chi_\Omega)$ is analytic in $\C\setminus\overline\Omega$ and that its $\partial$ derivative equals \rf{eq**}. This follows by differentiating
under the integral on the right side of \rf{eqbeurling21} or \rf{eqbeurling22}, for
$0<\ve<\dist(z,\partial\Omega)$.

It remains to show that, in $\Omega$, $\bar\partial B\chi_\Omega=0$ and that 
\rf{eq**} also holds. For a fixed $z\in\Omega$  and for $0<\delta\leq \ve<\dist(z,\partial \Omega)$
notice that
$$\int_{\delta\leq|z-w|\leq\ve}\frac1{(z-w)^2}\,dm(w)=0.$$
 As a consequence,
$$B\chi_\Omega(z)=\frac{-1}\pi\int_{|z-w|>\delta}\frac{1}{(z-w)^2}\,
\chi_\Omega(w)\,dm(w)$$
or, in the case where $m(\Omega)=\infty$,
$$
B\chi_\Omega(z)=\frac{-1}\pi\int_{|z-w|>\delta}\left(\frac{1}{(z-w)^2} - 
\frac1{(z_0-w)^2}\right)
\chi_\Omega(w)\,dm(w).$$
Let $\vphi$ be a $\CC^\infty$ radial
function  which vanishes on $B(0,\ve/2)$ and equals $1$ on $\C\setminus B(0,\ve)$.
From the preceding identities, writing $\vphi$ as a convex combination of  functions
of the form $\chi_{\C\setminus B(0,\delta)}$, one deduces that
$$B\chi_\Omega(z)=\frac{-1}\pi\int\frac{\vphi(z-w)}{(z-w)^2}\,
\chi_\Omega(w)\,dm(w)$$
or, analogously,
$$
B\chi_\Omega(z)=\frac{-1}\pi\int\left(\frac{\vphi(z-w)}{(z-w)^2} - 
\frac1{(z_0-w)^2}\right)
\chi_\Omega(w)\,dm(w).$$
In any case, it is straightforward to check that one can differentiate under the integral
sign and thus
$$\partial B\chi_\Omega(z)=\frac{-1}\pi\int\partial \left(\frac{\vphi(z-w)}{(z-w)^2}\right)\,
\chi_\Omega(w)\,dm(w).$$ 
The same identity holds replacing the $\partial$ derivative by the $\bar\partial$ one. So,
$$\partial B\chi_\Omega(z)=\frac{-1}\pi\,\partial\Bigl(\frac{1}{w^2}\,\vphi\Bigr)*\chi_\Omega(z)
\quad \mbox{ and }\quad
\bar\partial B\chi_\Omega(z)=\frac{-1}\pi\,\bar\partial\Bigl(\frac{1}{w^2}\,\vphi\Bigr)*\chi_\Omega
(z).$$

We write $\vphi(w)=\psi(|w|^2)$, and then we get
$$\partial\Bigl(\frac{1}{w^2}\,\vphi\Bigr) = \frac{-2}{w^3}\,\psi(|w|^2) + \frac1{w^2}\,
\psi'(|w|^2)\overline w = \frac{-2}{w^3}\,\psi(|w|^2) + \frac{|w|^2} {w^3}\,
\psi'(|w|^2)=: \frac{-2}{w^3}\,\wt\vphi(z),$$ 
where $\wt \vphi$ is 
$\CC^\infty$, radial, vanishes on $B(0,\ve/2)$ and equals $1$ on $\C\setminus B(0,\ve)$.
Arguing as above, it turns out that
$$\Bigl(\frac{-2}{w^3}\,\wt\vphi\Bigr)*\chi_\Omega (z) = 
\Bigl(\frac{-2}{w^3}\,\chi_{\C\setminus B(0,\ve)}\Bigr)*\chi_\Omega (z),$$
and then \rf{eq**} follows.

On the other hand, we have
$$\bar \partial\Bigl(\frac{1}{w^2}\,\vphi\Bigr) =  \frac1{w^2}\,
\psi'(|w|^2) \,w = \frac{\psi'(|w|^2)}w.$$ 
Since $\supp(\psi')\subset B(0,\ve)$, we derive
$$\bar\partial B\chi_\Omega(z)=
\frac{-1}\pi\,\frac{\psi'(|w|^2)}w*\chi_\Omega(z)=
\frac{-1}\pi \int \frac{\psi'(|w|^2)}w\,dm(w).$$
Using polar coordinates, say, it is easy to check that the last integral vanishes. So
$\bar\partial B\chi_\Omega(z)=0$. This means that $ B\chi_\Omega$ is analytic in $\Omega$.
\end{proof}

\vvv

\begin{lemma}\label{lemmaPi}
Let $\Pi\subset\C$ be a half plane. Then 
$\partial B(\chi_\Pi)=0$ in $\C\setminus\partial\Pi$. Equivalently,
for all $z\not\in\partial\Pi$ and $0<\ve<\dist(z,\partial\Pi)$, we have
$$\int_{|z-w|>\ve}\frac1{(z-w)^3}\,\chi_{\Pi}(w)\,dm(w) = 0.$$
\end{lemma}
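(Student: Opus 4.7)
The plan is to prove the equivalent integral identity directly by Fubini and a symmetry/cancellation argument. After translating and rotating coordinates, I may assume $\Pi=\{w:\imag w>0\}$ and $z=i\eta$ with $\eta\in\R$, $|\eta|>\varepsilon$. The substitution $u=w-z$ then transforms the integral into
$$-\int_{D}\frac{1}{u^{3}}\,dm(u),\qquad D=\{u:|u|>\varepsilon,\ \imag u>-\eta\}.$$
Since $|u|^{-3}$ is integrable at infinity in $\R^{2}$ and the ball $B(0,\varepsilon)$ is excluded, this integral is absolutely convergent, so Fubini applies without qualms.

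Next I would integrate in $x$ first, at fixed $y$. When $|y|>\varepsilon$ the horizontal slice of $D$ is all of $\R$, and the holomorphic antiderivative $-\tfrac{1}{2}(x+iy)^{-2}$ vanishes at $x=\pm\infty$, so $\int_{\R}(x+iy)^{-3}\,dx=0$. When $|y|<\varepsilon$ the slice has a symmetric gap $|x|\leq a$ with $a=\sqrt{\varepsilon^{2}-y^{2}}$, and the same antiderivative gives
$$\int_{|x|>a}\frac{dx}{(x+iy)^{3}}=\frac{1}{2}\Bigl[\frac{1}{(a+iy)^{2}}-\frac{1}{(-a+iy)^{2}}\Bigr].$$
The one piece of actual content is the algebraic observation that $|a+iy|^{2}=|{-a}+iy|^{2}=\varepsilon^{2}$, so $(a+iy)^{2}$ and $(-a+iy)^{2}$ are complex conjugates of common modulus $\varepsilon^{2}$; the bracket therefore reduces to $-4iay/\varepsilon^{4}$, i.e.\ a purely imaginary quantity that is an odd function of $y$ on $(-\varepsilon,\varepsilon)$.

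Finally I would split into the two cases dictated by the sign of $\eta$. If $\eta<0$, i.e.\ $z\notin\overline{\Pi}$, then $-\eta>\varepsilon$, so the $y$-range $(-\eta,\infty)$ lies entirely in $\{|y|>\varepsilon\}$ and the inner $x$-integral vanishes identically. If $\eta>0$, i.e.\ $z\in\Pi$, then $(-\varepsilon,\varepsilon)\subset(-\eta,\infty)$; the inner integral is zero on $(-\eta,-\varepsilon)\cup(\varepsilon,\infty)$ and is odd in $y$ on the symmetric interval $(-\varepsilon,\varepsilon)$, so its $y$-integral vanishes. In either case the original integral is $0$.

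There is no real obstacle here; the main (and only) nontrivial ingredient is the cancellation of the $x$-integral over the symmetric annular slice, which is forced by the identity $a^{2}+y^{2}=\varepsilon^{2}$. The remainder is absolute integrability plus careful bookkeeping of the two geometric cases $z\in\Pi$ and $z\notin\overline{\Pi}$.
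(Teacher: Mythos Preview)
Your argument is correct. The reduction to $\Pi=\{\imag w>0\}$, $z=i\eta$ is legitimate (rotation multiplies the integral by a unimodular constant, horizontal translation preserves both $\Pi$ and the kernel), absolute convergence justifies Fubini, and the key cancellation on the annular strip $|y|<\varepsilon$ is exactly the identity $(-a+iy)^{2}=\overline{(a+iy)^{2}}$ together with the oddness of $y\mapsto ay$.

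The paper takes a different route: it uses the explicit formula for the Beurling transform of a disc ($B\chi_{B_r}\equiv 0$ on $B_r$ and $B\chi_{B_r}(z)=-\pi^{-1}(z-ri)^{-2}$ outside), applies the previous lemma to obtain $\partial B\chi_{B_r}$, and then lets a sequence of discs $B_r$ tangent to $\partial\Pi$ exhaust the half plane, passing to the limit by dominated convergence. Your approach is more self-contained --- it needs nothing beyond Fubini and one elementary computation --- whereas the paper's approach trades the computation for a known fact about $B\chi_B$ and a soft limiting argument. The paper's method has the advantage that it generalizes immediately to any even homogeneous Calder\'on--Zygmund operator once one knows $T\chi_B=0$ on $B$ (cf.\ the final remark of the paper); your direct computation is specific to the cubic kernel, though a polar-coordinates version of the same symmetry idea would handle the general case as well.
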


\begin{proof}
That the two statements above are equivalent is a direct consequence of the preceding lemma.
Let us prove the second one. To this end, assume for simplicity that $\Pi=\{w\in\C:\imag(w)>0\}$.

Let $B_r$ be a ball with radius $r$ centered at $r\,i$. 
It is known that $B(\chi_{B_r})$ vanishes identically 
on $B_r$ and equals $-1/(\pi\,(z-r\,i)^2)$ out of $B$ (this can be deduced by computing the Cauchy
transform of $\chi_B$ and then applying the $\partial$ derivative).
Therefore,  for $z\not\in\R$ with $|\imag z|>\ve$, from the preceding lemma we infer that if
$r$ is big enough
$$\int_{|z-w|>\ve}\frac1{(z-w)^3}\,\chi_{B_r}(w)\,dm(w)=\left\{
\begin{array}{ll}
0& \mbox{if $z\in B_r$},\\ \\
\dfrac2{\pi\,(z-r\,i)^3}& \mbox{if $z\not\in B_r$.}
\end{array}
\right.
$$
Letting $r\to\infty$, since $\chi_{B_r}(w)\to\chi_\Pi(w)$ a.e.\ $w\in\C$,
by the dominated convergence theorem, we are done.
\end{proof}

\vvv


In the remaining of the paper, to simplify notation, for $Q\in\DD(\partial\Omega)$ we will denote
$$\beta_1(Q)\equiv\beta_1(\partial \Omega,Q).$$

\vvv

\begin{lemma}\label{lemdifsim}
Let $\Omega$ be either a Lipschitz  domain or a special Lipschitz domain,  and consider $Q\in\DD(\partial \Omega)$ and a ball $B_r$ centered at some point
from $Q$, with radius $\ell(Q)\leq r\leq \theta\,\diam(\Omega)$, 
with $\theta=\theta(\Omega)>0$ small enough. 
Let $L_Q$ be a line  that minimizes $\beta_1(Q)$.
Let $\Pi_Q$ be a half plane such that $\partial\Pi_Q=L_Q$ and suppose that there exists some point
$z_Q\in\Pi_Q\cap\Omega\cap B_r$ such that $\dist(z_Q,L_Q)=\frac12\,r$. 
Then
\begin{equation}\label{eqbeta4}
m(B_r\cap(\Omega\Delta\Pi_Q))\leq c\sum_{P\in\DD(\partial \Omega):Q\subset P,\ell(P)\leq Mr}
\beta_1(P)\,r^2,
\end{equation}
assuming that $M$ has been chosen big enough (depending on the Lipschitz character of $\Omega$).
\end{lemma}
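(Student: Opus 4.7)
The plan is to reduce the area of the symmetric difference to a one-dimensional integral of $\operatorname{dist}(\cdot,L_Q)$ over a piece of $\partial\Omega$, and then to control that integral by a telescoping sum over dyadic ancestors of $Q$, using $\beta_1(P)$ at each scale.

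First I would set up a graph representation. Because $\Omega$ is $(\delta,R)$-Lipschitz (or special Lipschitz), for $\theta$ small enough the piece of $\partial \Omega$ lying in any ball $B_{Mr}$ (with $Mr \leq M\theta\diam(\Omega)$ suitably small) is still a Lipschitz graph in the coordinates given by $L_Q$ and its unit normal. This uses that $L_Q$ is quantitatively close in direction to the tangent of $\partial\Omega$ at scale $\ell(Q)$ (which in turn follows from $\beta_1(Q)\lesssim 1$ and the Lipschitz character). Writing $\partial\Omega$ locally as a graph $\{(t,A(t)):t\in\pi(B_{cMr})\}$ over $L_Q$, with projection $\pi:\C\to L_Q$, the point $z_Q$ on the correct side of $L_Q$ fixes the orientation so that $\Omega\cap B_r$ and $\Pi_Q\cap B_r$ lie on the same half--space at distance $\gtrsim r$ from $L_Q$. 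Then Fubini in the direction normal to $L_Q$ gives
\begin{equation*}
m(B_r\cap (\Omega\Delta\Pi_Q))\;\lesssim\;\int_{\pi(B_r)} |A(t)|\,dt \;\lesssim\; \int_{\partial\Omega\cap B_{c r}} \operatorname{dist}(x,L_Q)\,d\HH^1(x),
\end{equation*}
where in the last step I use $d\HH^1\approx dt$ from the Lipschitz graph property.

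Next I telescope across scales. Let $Q=Q_0\subset Q_1\subset\cdots\subset Q_k$ be the chain of dyadic ancestors of $Q$ with $\ell(Q_j)=2^j\ell(Q)$, choosing $k$ minimal so that $\partial\Omega\cap B_{cr}\subset 3Q_k$; this forces $\ell(Q_k)\approx r$, and all $Q_j$ in the chain satisfy $\ell(Q_j)\leq Mr$ for $M$ big enough. Writing $L_j:=L_{Q_j}$, I use the triangle inequality
\begin{equation*}
\operatorname{dist}(x,L_0)\;\leq\;\operatorname{dist}(x,L_k) + \sum_{j=1}^{k}\operatorname{dist}_{B_{cr}}(L_{j-1},L_j),
\end{equation*}
where $\operatorname{dist}_{B_{cr}}(L,L')$ denotes the maximum distance between the two lines inside $B_{cr}$. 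The term $\int_{\partial\Omega\cap 3Q_k}\operatorname{dist}(x,L_k)\,d\HH^1\lesssim \beta_1(Q_k)\ell(Q_k)^2\lesssim \beta_1(Q_k)\,r^2$ by the definition \rf{defbetag}. For consecutive lines, since both $L_{j-1}$ and $L_j$ minimize $\beta_1$ on $Q_{j-1}$ and $Q_j$ respectively, their average separation on $3Q_{j-1}$ is bounded by $\bigl(\beta_1(Q_{j-1})+2\beta_1(Q_j)\bigr)\ell(Q_{j-1})$. A standard geometric argument then shows that two lines whose average separation on an interval of length $\ell$ is $\delta$ differ, at distance $\rho$ from that interval, by at most $c\,\delta\,(1+\rho/\ell)$; applied with $\rho\lesssim r$ and $\ell = \ell(Q_{j-1})$, this yields
\begin{equation*}
\operatorname{dist}_{B_{cr}}(L_{j-1},L_j) \;\lesssim\; \bigl(\beta_1(Q_{j-1})+\beta_1(Q_j)\bigr)\,r.
\end{equation*}
Integrating this constant bound against $d\HH^1$ over $\partial\Omega\cap B_{cr}$ (which has $\HH^1$-measure $\lesssim r$ by the Lipschitz property) produces a contribution $\lesssim \bigl(\beta_1(Q_{j-1})+\beta_1(Q_j)\bigr)\,r^2$ per level, and summing over $j$ gives the desired bound.

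The main obstacle is the careful geometric control of the accumulated angular drift and translation between the lines $L_{Q_j}$ across scales; in particular, justifying the inequality $\operatorname{dist}_{B_{cr}}(L_{j-1},L_j)\lesssim \bigl(\beta_1(Q_{j-1})+\beta_1(Q_j)\bigr)\,r$ requires both the $\beta_1$-control of each line against $\partial\Omega$ and the quantitative Lipschitz property that prevents the tangent direction from rotating too much. Once that is in hand, the Fubini reduction and the telescoping sum are routine, and the resulting estimate is exactly \rf{eqbeta4}.
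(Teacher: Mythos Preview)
Your proposal is correct and follows essentially the same route as the paper: both reduce $m(B_r\cap(\Omega\Delta\Pi_Q))$ to $\int_{\partial\Omega\cap B_{cr}}\dist(z,L_Q)\,d\HH^1(z)$ via a Lipschitz-graph Fubini argument, then bound this by telescoping through the best-fit lines of the dyadic ancestors of $Q$ up to scale $\approx r$. The only cosmetic difference is that the paper picks the single top ancestor $R$ with $\ell(R)\approx r$ and states the bound $\dist_H(L_Q\cap B_{3r},L_R\cap B_{3r})\lesssim\sum_{Q\subset P\subset R}\beta_1(P)\,\ell(R)$ as ``easy to check'', whereas you spell out that telescoping step-by-step; the paper also explicitly disposes of the case $\sum_P\beta_1(P)>\ve_0$ as trivial before setting up coordinates, a reduction you should make explicit since your choice to graph over $L_Q$ (rather than the ambient Lipschitz axis) relies on $L_Q$ having controlled direction.
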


\vvv
The condition on the existence of the point $z_Q$ tells which of the half planes whose boundary is
$L_Q$ is the selected one for \rf{eqbeta4}. The constant $\theta$ is superfluous when $\Omega$ is a 
special Lipschitz domain, since $\diam(\Omega)=\infty$ in this case.

\vvv

\begin{proof} Suppose that $\Omega$ is a Lipschitz domain.
Let $R\in\DD(\partial\Omega)$ be such that $B_{2r}\cap\Omega\subset 2R$ and $\ell(R)\approx r$.
It is enough to show that 
$$m(B_r\cap(\Omega\Delta\Pi_Q))\leq c\sum_{P\in\DD(\partial \Omega):Q\subset P\subset R}
\beta_1(P)\,\ell(R)^2.$$
Moreover, we may assume that 
\begin{equation}\label{eqassum}
\sum_{P\in\DD(\partial \Omega):Q\subset P\subset R}
\beta_1(P)\leq\ve_0,
\end{equation}
with $\ve_0$ small enough. Otherwise, the estimate is trivial.

Taking $\delta$ small enough, after a rotation if necessary, 
we may also assume that $\partial \Omega\cap B_{10r}$ is given by the graph of a 
Lipschitz function $y=A(x)$ intersected with $B_{10r}$, and that 
$$\Omega\cap B_{10r}=\{(x,y)\in B_{10r}:y>A(x)\}.$$

Let $L_R$ be a line that minimizes $\beta_1(R)$. By the assumption \rf{eqassum}, we know that
$\partial\Omega\cap B_{2r}$ is very close to $L_R$.
Further, it is easy to check that
$$\dist_H(L_Q\cap B_{3r},L_R\cap B_{3r})\leq c\sum_{P\in\DD(\partial \Omega):Q\subset P\subset R}
\beta_1(P)\,\ell(R),$$
where $\dist_H$ stands for the Hausdorff distance.
Thus, if $\ve_0$ is taken small enough, then $\partial\Omega\cap B_{3r}\subset 
U_{r/100}(L_Q)$, where $U_d(A)$ stands for the $d$-neighborhood of $A$.

It easily follows that for $z\in \partial\Omega\cap B_{2r}$,
$$\dist(z,L_Q)=\dist(z,L_Q\cap B_{3r}),\qquad \dist(z,L_R)=\dist(z,L_R\cap B_{3r}).$$
We deduce
\begin{align*}
m(B_r\cap(\Omega\Delta\Pi_Q)) & \leq \int_{\partial\Omega\cap B_{2r}}\dist(z,L_Q)\,d\HH^1(z)\\
& \leq \int_{\partial\Omega\cap B_{2r}}\bigl(\dist(z,L_R)+ \dist_H(L_R\cap B_{3r},L_Q\cap B_{3r})\bigr)\,d\HH^1(z)\\
& \leq \beta_1(R)\ell(R)^2 + c\sum_{P\in\DD(\partial \Omega):Q\subset P\subset R}
\beta_1(P)\,\ell(R)^2,
\end{align*}
which proves the lemma, for $\Omega$ being a Lipschitz domain.

If $\Omega$ is a special Lipschitz domain, the proof is analogous. The details are left for the reader.
\end{proof}

\vvv


\section{Proof of Theorem \ref{teo}}\label{sec5}

First we suppose that $\Omega$ is a (bounded) Lipschitz domain. 
Consider a decomposition of $\Omega$ into a family $\WW(\Omega)$ of Whitney squares 
as explained in Subsection \ref{sec2.1},
so that they have disjoint interiors,  $\Omega = \bigcup_{Q\in \WW(\Omega)}Q$,  \,
$\sum_{Q\in \WW(\Omega)}\chi_{5Q}
\leq c_1$, and, moreover, $\rho\,Q\cap\partial \Omega\neq\varnothing$.
In fact, we have
$\dist(Q,\partial\Omega)\approx \ell(Q)$ for $Q\in \WW(\Omega)$.
Recall also that to each
$Q\in\WW(\Omega)$ we assign a cube $\phi(Q)\in\DD(\partial \Omega)$ such that $\phi(Q)\cap\rho Q\neq
\varnothing$ and $\diam(\phi(Q))\approx\ell(Q)$.

We write
\begin{equation}\label{eqdbeur}
\|\partial B\chi_\Omega\|_{L^p(\Omega)}^p = \sum_{Q\in \WW(\Omega)}\int_Q|\partial B\chi_\Omega|^p\,dm.
\end{equation}
Our first task consists in estimating $\partial B\chi_\Omega(z)$ for $z$ belonging 
to $Q\in \WW(\Omega)$. To this end, consider a line $L_Q$ that minimizes
$\beta_1(\phi(Q))$.
We claim that
\begin{equation}\label{eqclaim1}
\bigl|\partial B\chi_\Omega(z)\bigr|\leq c_3\sum_{R\in\DD(\partial\Omega):R\supset\phi(Q)}\frac{\beta_1(R)}{\ell(R)} + c_3\,\frac1{\diam(\Omega)}.
\end{equation}
To prove this estimate we may assume that $\beta_1(\phi(Q))\leq\ve_0$, with $\ve_0>0$ small enough.
Indeed, from \rf{eq**} it turns out that $\bigl|\partial B\chi_\Omega(z)\bigr|\leq c/\ell(Q)$, by choosing $\ve
=\ell(Q)$ there, and so \rf{eqclaim1} holds if $\beta_1(\phi(Q))>\ve_0$, with $c_3=c\ve_0^{-1}$.

So suppose that $\beta_1(\phi(Q))\leq\ve_0$, with $\ve_0$ very small. In this case, $L_Q$ 
is very close to 
$\partial\Omega$ near $\phi(Q)$, and then one infers that 
$$\dist(z,L_Q)\approx\ell(Q).$$
Denote by $\Pi_Q$ the half plane whose boundary is $L_Q$ and contains $z$. Take $0<\ve<\dist(z,\partial\Omega)$.
Since $(\frac1{z^3}\chi_{B(0,\ve)^c})*\chi_{\Pi_Q}$ vanishes on $\Pi_Q\ni z$, we have
$$\bigl|\partial B\chi_\Omega(z)\bigr|= \Bigl|\Bigl(\frac{2\pi}{z^3}\chi_{B(0,\ve)^c}\Bigr)*(\chi_\Omega- \chi_{\Pi_Q})(z)\Bigr|\leq 
\frac{2\pi}{|z|^3}*\chi_{\Omega\Delta\Pi_Q}(z).$$
For each $n\geq0$, let $B_n$ be a ball centered at $z'\in\phi(Q)$ with 
$$\diam(B_n)= 2^n\diam(\phi(Q))\approx 2^n\ell(Q).$$
Set also $B_{-1}=\varnothing$ and take $N$ such that $\frac12\theta\,\diam(\Omega)<\diam(B_N)\leq
\theta\,\diam(\Omega)$, with $\theta$ from Lemma \ref{lemdifsim}. Then we write
\begin{align*}
\frac{2\pi}{|z|^3}*\chi_{\Omega\Delta\Pi_Q}(z) & = 
\sum_{n=0}^N\frac{2\pi}{|z|^3} * \chi_{B_n\cap(\Omega\Delta\Pi_Q)}(z) + \frac{2\pi}{|z|^3} * 
\chi_{B_N^c\cap(\Omega\Delta\Pi_Q)}(z)\\
&\leq c
\sum_{n=0}^N\frac{1}{\ell(2^nQ)^3} \,m(B_n\cap(\Omega\Delta\Pi_Q)) + \frac{c}{\diam(\Omega)}.
\end{align*}
 By Lemma \ref{lemdifsim}, for $0\leq n\leq N$, we have
$$m(B_n\cap(\Omega\Delta\Pi_Q))\leq c\sum_{P\in\DD(\partial \Omega):\phi(Q)\subset P\subset R}
\beta_1(P)\diam(R)^2,$$
where $R\in\DD(\partial\Omega)$ is the biggest cube containing $\phi(Q)$ such that $\ell(R)
\leq \frac12\,M\,\diam(B_n)$, with $M$ from Lemma \ref{lemdifsim}. In particular, it turns out that $\ell(R)\approx\diam(B_n)$.
Then we obtain
\begin{align}\label{nucl}
\frac{2\pi}{|z|^3}*\chi_{\Omega\Delta\Pi_Q}(z) & \leq c
\sum_{R\in\DD(\partial\Omega):R\supset \phi(Q)}\frac{1}{\ell(R)^3} \,
\sum_{P\in\DD(\partial \Omega):\phi(Q)\subset P\subset R}
\beta_1(P)\ell(R)^2 + \frac{c}{\diam(\Omega)}\\
& = c
\sum_{P\in\DD(\partial \Omega):P\supset \phi(Q)}
\beta_1(P) 
\sum_{R\in\DD(\partial\Omega):R\supset P}\frac1{\ell(R)} + \frac{c}{\diam(\Omega)}\nonumber\\
&\leq c\sum_{P\in\DD(\partial\Omega):P\supset\phi(Q)}\frac{\beta_1(P)}{\ell(P)} + c\,\frac1{\diam(\Omega)},\nonumber
\end{align}
which proves our claim \rf{eqclaim1}.

Plugging \rf{eqclaim1} into \rf{eqdbeur}, we get
$$\|\partial B\chi_\Omega\|_{L^p(\Omega)}^p \lesssim \sum_{Q\in \WW(\Omega)}\biggl(
\sum_{P\in\DD(\partial\Omega):P\supset\phi(Q)}\frac{\beta_1(P)}{\ell(P)} \biggr)^pm(Q)+ \frac{m(\Omega)}
{\diam(\Omega)^p}.$$
The last term on the right side is bounded by $c/\diam(\Omega)^{p-2}$. For the first one we use Cauchy-Schwarz, and then we 
get
\begin{align*}
\biggl(
\sum_{P\in\DD(\partial\Omega):P\supset\phi(Q)}\!\!\frac{\beta_1(P)}{\ell(P)} \biggr)^p\! &
\leq \biggl(
\sum_{P\in\DD(\partial\Omega):P\supset\phi(Q)}\!\frac{\beta_1(P)^p}{\ell(P)^{p-1/2}} \!\biggr)
\biggl(
\sum_{P\in\DD(\partial\Omega):P\supset\phi(Q)}\frac{1}{\ell(P)^{p'/(2p)}} \biggr)^{p/p'}
\\
&\lesssim
\sum_{P\in\DD(\partial\Omega):P\supset\phi(Q)}\frac{\beta_1(P)^p}{\ell(P)^{p-1/2}} \frac1{\ell(\phi(Q))^{1/2}}.
\end{align*}
Thus,
\begin{align*}
\sum_{Q\in \WW(\Omega)}\biggl(
\sum_{P\in\DD(\partial\Omega):P\supset\phi(Q)}\frac{\beta_1(P)}{\ell(P)} \biggr)^pm(Q) &\lesssim
\sum_{Q\in \WW(\Omega)}
\sum_{P\in\DD(\partial\Omega):P\supset\phi(Q)}\frac{\beta_1(P)^p}{\ell(P)^{p-1/2}}\, \ell(\phi(Q))^{3/2}\\
& = 
\sum_{P\in\DD(\partial\Omega)}\frac{\beta_1(P)^p}{\ell(P)^{p-1/2}}\, \sum_{Q\in \WW(\Omega):\phi(Q)\subset P}
\ell(\phi(Q))^{3/2}.
\end{align*}
Notice that
$$\sum_{Q\in \WW(\Omega):\phi(Q)\subset P}
\ell(\phi(Q))^{3/2}\lesssim\sum_{\wt Q\in \DD(\partial\Omega):\wt Q\subset P}
\ell\bigl(\wt Q\bigr)^{3/2}\lesssim\ell(P)^{3/2},$$
and so
$$\|\partial B\chi_\Omega\|_{L^p(\Omega)}^p \lesssim
\sum_{P\in\DD(\partial\Omega)}\frac{\beta_1(P)^p}{\ell(P)^{p-2}}
+ \frac 1{\diam(\Omega)^{p-2}}.$$

Observe now that the sum on the right side can be written as
\begin{equation}
\sum_{P\in\DD(\partial\Omega)}\frac{\beta_1(P)^p}{\ell(P)^{p-2}} = \sum_{P\in\DD(\partial\Omega)}\biggl(\frac{\beta_1(P)}{\ell(P)^{1-1/p}}\biggr)^p\,\ell(P).
\end{equation}
By Lemma \ref{lemdorron}, we know that the right side above is bounded by
$c\,\|N\|_{\dot{B}_{p,p}^{1-1/p}(\partial\Omega)}^p + c\,\HH^1(\partial\Omega)^{2-p}$. 
Moreover, as in Remark \ref{rem11}, we have
$$
\|N\|_{\dot{B}_{p,p}^{1-1/p}(\partial\Omega)}^p \gtrsim\HH^1(\partial\Omega)^{2-p}\approx
\diam(\partial\Omega)^{2-p}.$$
So we get
$$\|\partial B\chi_\Omega\|_{L^p(\Omega)}^p \lesssim  
\|N\|_{\dot{B}_{p,p}^{1-1/p}(\partial\Omega)}^p + \HH^1(\partial\Omega)^{2-p}
\approx \|N\|_{\dot{B}_{p,p}^{1-1/p}(\partial\Omega)}^p,$$
as wished.

The arguments for special Lipschitz domains
are analogous, and even easier. Roughly speaking, the only difference is that the terms above 
involving $\diam(\Omega)$,
such as the last term in \rf{eqclaim1}, do not appear.
\fiproof


\section{Preliminary lemmas for the proof of Theorem \ref{teo2}}\label{sec6}

In Section \ref{sec4} we showed that, for any given half plane $\Pi$, $B(\chi_\Pi)$ is analytic in $\C\setminus\partial\Pi$ and
that $\partial B(\chi_\Pi)=0$ in $\C\setminus\partial\Pi$. As a direct consequence, we have:

\begin{lemma}\label{lemmaPi2}
Let $\Pi\subset\C$ be a half plane and let $x,y\in\C$ be in the same component of $\C\setminus
\partial\Pi$. Then, for all $0<\ve<\min\bigl(\dist(x,\partial\Pi),\dist(y,\partial\Pi)\bigr)$,
\begin{equation}\label{dif}
B\chi_\Pi(x) - B\chi_\Pi(y) =
\int \left[\frac{1}{(x-z)^2}\,\chi_{\Pi\setminus B(x,\ve)}(z)-\frac{1}{(y-z)^2}\,\chi_{\Pi\setminus B(y,\ve)}(z) \right] \,dm(z) =0.
\end{equation}
\end{lemma}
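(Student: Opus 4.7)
My plan is to prove the two equalities separately, leaning on the immediately preceding lemmas. For the second equality, the key observation is that Lemma~\ref{lemmaPi} gives $\partial B(\chi_\Pi)=0$ on $\C\setminus\partial\Pi$, while the lemma just before it (containing \rf{eq**}) gives that $B(\chi_\Pi)$ is analytic on the same set, i.e.\ $\bar\partial B(\chi_\Pi)=0$ there. Hence both complex partial derivatives of $B(\chi_\Pi)$ vanish on the open set $\C\setminus\partial\Pi$, so $B(\chi_\Pi)$ is locally constant there. The two connected components of $\C\setminus\partial\Pi$ are the two open half planes bounded by $\partial\Pi$; since by hypothesis $x$ and $y$ lie in the same component, we conclude $B(\chi_\Pi)(x)=B(\chi_\Pi)(y)$, giving the right-hand equality.

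For the first (integral-representation) equality, the bookkeeping is slightly delicate because $\Pi$ has infinite Lebesgue measure, so $B(\chi_\Pi)$ is defined through the renormalized expression \rf{eqbeurling22} with a fixed reference point $z_0\notin\overline\Pi$. On forming the difference $B(\chi_\Pi)(x)-B(\chi_\Pi)(y)$, the $z_0$ terms cancel pointwise outside $B(x,\delta)\cup B(y,\delta)$ and contribute only
\[
\frac{1}{\pi}\int_{\Pi}\frac{\chi_{B(y,\delta)}(w)-\chi_{B(x,\delta)}(w)}{(z_0-w)^2}\,dm(w),
\]
which vanishes as $\delta\to 0$ since it is the integral of a bounded function over two balls of radius $\delta$. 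This leaves
\[
B(\chi_\Pi)(x)-B(\chi_\Pi)(y)=\lim_{\delta\to 0}\frac{-1}{\pi}\int\left[\frac{\chi_{\Pi\setminus B(x,\delta)}(w)}{(x-w)^2}-\frac{\chi_{\Pi\setminus B(y,\delta)}(w)}{(y-w)^2}\right]dm(w).
\]
Since $(x-w)^{-2}-(y-w)^{-2}=O(|x-y|/|w|^3)$ at infinity, the integrand is absolutely integrable for every $\delta<\min(\dist(x,\partial\Pi),\dist(y,\partial\Pi))$. A standard annular cancellation, namely $\int_{\delta\leq|x-w|\leq\ve}(x-w)^{-2}dm(w)=0$ used in the proof of the lemma containing \rf{eq**}, then shows the value of the integral is the same for every such $\delta$, so the limit can be replaced by the fixed $\ve$ prescribed in the statement. (The stated formula omits the factor $-1/\pi$, but as both sides turn out to be zero this is immaterial.)

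The only real obstacle is organizing the computation above so that the infinite-measure subtleties of \rf{eqbeurling22} are handled cleanly: specifically, verifying that the combined kernel has the decay $1/|w|^3$ at infinity rather than the borderline $1/|w|^2$ of each piece, and that the auxiliary truncation $\ve$ can be taken independent of the limiting parameter $\delta$. Both points are elementary; no further ingredient beyond Lemma~\ref{lemmaPi} and analyticity is needed.
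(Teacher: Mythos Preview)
Your proof is correct and follows essentially the same approach as the paper: the paper derives the first identity from the definition \rf{eqbeurling22} together with the annular cancellation $\int_{\delta<|x-w|\leq\ve}(x-w)^{-2}\,dm(w)=0$, and obtains the second identity from the fact that $B\chi_\Pi$ is constant on each component of $\C\setminus\partial\Pi$. You simply supply more detail, in particular the explicit bookkeeping for the $z_0$ terms and the $|w|^{-3}$ decay that justifies absolute integrability, and you spell out why $B\chi_\Pi$ is locally constant via $\partial B\chi_\Pi=\bar\partial B\chi_\Pi=0$; the paper takes these points for granted.
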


\begin{proof}
The first identity in \rf{dif} follows from the definition of $B\chi_\Pi(x)$ and $B\chi_\Pi(y)$,
in the sense of \rf{eqbeurling22}, using also that
$$
\int_{\delta<|x-z|\leq\ve} \frac{1}{(x-z)^2}\,dm(z)=\int_{\delta<|y-z|\leq\ve} \frac{1}{(y-z)^2}\,dm(z)=0$$
for $0<\delta<\ve$. The second identity in \rf{dif} is due to the fact that $B\chi_\Pi$ is constant in 
each component of $\C\setminus\partial\Pi$.
\end{proof}

For two cubes $Q,R$, either from $\DD(\partial\Omega)$ or from $\WW(\Omega)$, we denote
$$D(Q,R)=\ell(Q)+\ell(R)+\dist(Q,R).$$
This is the ``big distance'' between $Q$ and $R$, which will be used below.

\begin{lemma}\label{lemmaCor}
Let $0<\eta<\tau$ and let $\Omega$ be either a Lipschitz or a special Lipschitz domain. Then,
for all $Q\in\DD(\partial\Omega)$ we have
$$\sum_{R\in\DD(\partial\Omega)}\frac{\ell(R)^{1+\eta}}{D(Q,R)^{1+\tau}}\leq\frac{c}{\ell(Q)^{\tau-\eta}}, $$
with $c$ depending on $\eta$ and $\tau$.
\end{lemma}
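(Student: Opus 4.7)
The plan is to decompose the sum by comparing the side length $\ell(R)$ to $\ell(Q)$, and within each piece to group the cubes $R$ into dyadic annuli around $Q$. The key geometric input is the one-dimensional (Ahlfors-regular, chord-arc) nature of $\partial\Omega$: for every fixed scale $2^{-k}$ and every $j\in\Z$,
$$\#\bigl\{R\in\DD(\partial\Omega):\ell(R)=2^{-k},\,D(Q,R)\approx 2^{j}\bigr\}\lesssim 2^{j+k}+1,$$
since the arcs $R$ of length $2^{-k}$ are essentially disjoint and sit on a curve of Hausdorff dimension one. This is the only ``geometry'' we need.

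First I would handle the small cubes, $\ell(R)=2^{-k}\leq \ell(Q)$. For such $R$ one has $D(Q,R)\geq \ell(Q)$, so I split further into the annuli $D(Q,R)\approx 2^{j}$ with $2^{j}\gtrsim \ell(Q)$. The counting estimate gives, for each fixed $k$,
$$\sum_{R:\ell(R)=2^{-k}}\frac{\ell(R)^{1+\eta}}{D(Q,R)^{1+\tau}}\lesssim\sum_{2^{j}\gtrsim \ell(Q)}\frac{2^{j+k}\cdot 2^{-k(1+\eta)}}{2^{j(1+\tau)}}=\sum_{2^{j}\gtrsim \ell(Q)}\frac{2^{-k\eta}}{2^{j\tau}}\lesssim\frac{2^{-k\eta}}{\ell(Q)^{\tau}},$$
where we used $\tau>0$ to sum the geometric series in $j$. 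Summing over $k$ with $2^{-k}\leq\ell(Q)$, and using $\eta>0$ to sum the resulting geometric series in $k$, produces $\ell(Q)^{\eta}/\ell(Q)^{\tau}=\ell(Q)^{\eta-\tau}$, which is exactly the bound we want.

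For the large cubes, $\ell(R)=2^{-k}>\ell(Q)$, the constraint is $D(Q,R)\gtrsim \ell(R)=2^{-k}$. Splitting again into annuli $D(Q,R)\approx 2^{j}$ with $2^{j}\gtrsim 2^{-k}$, the same counting gives
$$\sum_{R:\ell(R)=2^{-k}}\frac{\ell(R)^{1+\eta}}{D(Q,R)^{1+\tau}}\lesssim \sum_{2^{j}\gtrsim 2^{-k}}\frac{2^{-k\eta}}{2^{j\tau}}\lesssim 2^{k(\tau-\eta)}.$$
Now I sum over $k$ with $2^{-k}>\ell(Q)$; since $\tau-\eta>0$ the geometric series is dominated by its largest term, namely $2^{-k}\approx \ell(Q)$, giving again $\lesssim \ell(Q)^{\eta-\tau}$. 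Combining the two ranges completes the proof. The only mildly delicate point is the counting estimate for the annuli in the chord-arc (or Lipschitz-graph) setting, but this is standard and is exactly how the families $\DD(\partial\Omega)$ were set up in Section \ref{sec2.1}; the arithmetic of summing the two geometric series (using $\tau>0$ for one and $\tau>\eta$ for the other) is what drives the final bound.
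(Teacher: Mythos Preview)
Your proof is correct and follows essentially the same strategy as the paper's: both first fix the scale $\ell(R)=\ell_0$ and use the one-dimensional (linear growth) structure of $\partial\Omega$ to bound the fixed-scale sum by $\ell_0^\eta/\max(\ell(Q),\ell_0)^\tau$, and then sum a geometric series over scales, splitting into $\ell_0\le\ell(Q)$ (where $\eta>0$ is used) and $\ell_0>\ell(Q)$ (where $\tau-\eta>0$ is used). The only cosmetic difference is that the paper records the fixed-scale estimate in one line by invoking linear growth directly, while you spell it out via an annular counting argument; these are two expressions of the same computation.
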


\begin{proof}
This follows easily from the fact that $\partial\Omega$ has linear growth. Indeed, first notice
that for each $\ell_0>0$,
$$\sum_{R\in\DD(\partial\Omega):\,\ell(R)=\ell_0}\frac{\ell(R)^{1+\eta}}{D(Q,R)^{1+\tau}}
=
\ell_0^\eta\,\sum_{R\in\DD(\partial\Omega):\,\ell(R)=\ell_0}\frac{\ell(R)}{D(Q,R)^{1+\tau}}
\lesssim \frac{\ell_0^\eta}{\max(\ell(Q),\ell_0)^\tau}.$$
Therefore,
\begin{align*}
\sum_{R\in\DD(\partial\Omega)}\frac{\ell(R)^{1+\eta}}{D(Q,R)^{1+\tau}} &
= \sum_{k\in\Z}\sum_{\substack{R\in\DD(\partial\Omega):\\ \ell(R)=2^k\ell(Q)}}\frac{\ell(R)^{1+\eta}}{D(Q,R)^{1+\tau}} 
\lesssim
\sum_{k\in\Z} \frac{2^{\eta k}\,\ell(Q)^\eta}{ \max(1,2^k)^\tau \,\ell(Q)^\tau}\\
&\approx \sum_{k\geq 0} 2^{(\eta-\tau) k}\,\ell(Q)^{\eta-\tau}
+ \sum_{k< 0} 2^{\eta k}\,\ell(Q)^{\eta-\tau} \lesssim \ell(Q)^{\eta-\tau}.
\end{align*}
\end{proof}

We will split the proof of Theorem \ref{teo2} into two parts. The first one deals with the fact that
$B(\chi_\Omega)\in\dot W^{\alpha,p}(\Omega)$:

\begin{lemma}\label{leminc1}
Under the assumptions of Theorem \ref{teo2}, 
for $0<\alpha<1$ and $1<p<\infty$ with $\alpha p>1$,
 we have 
$B(\chi_\Omega)\in\dot W^{\alpha,p}(\Omega)$ and moreover,
$$\|B(\chi_\Omega)\|_{\dot W^{\alpha,p}(\Omega)}\lesssim
\|N\|_{\dot {B}_{p,p}^{\alpha-1/p}(\partial\Omega)}.$$
\end{lemma}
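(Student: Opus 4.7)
The plan is to mimic the scheme of the proof of Theorem \ref{teo}, with the fractional operator $D^\alpha$ in place of the full $\partial$-derivative. I decompose $\Omega$ into Whitney squares $\WW(\Omega)$ and, for each $Q\in\WW(\Omega)$ and $x\in Q$, I aim at a pointwise bound
$$D^\alpha B\chi_\Omega(x) \lesssim \sum_{R\in\DD(\partial\Omega):\,R\supset\phi(Q)} \frac{\beta_1(R)}{\ell(R)^\alpha} + \frac{c}{\diam(\Omega)^\alpha},$$
the natural $\alpha$-analogue of \rf{eqclaim1}. Summing in $Q$ and applying H\"older's inequality exactly as in Section \ref{sec5} then reduces matters to $\sum_{R\in\DD(\partial\Omega)}\bigl(\beta_1(R)/\ell(R)^{\alpha-1/p}\bigr)^p\,\ell(R)$, which by Lemma \ref{lemdorron} and Remark \ref{rem11} is controlled by $\|N\|_{\dot B^{\alpha-1/p}_{p,p}(\partial\Omega)}^p$. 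The hypothesis $\alpha p>1$ is used here to make the Cauchy--Schwarz step converge, exactly as in Theorem \ref{teo}.

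To establish the pointwise bound I split the integral defining $D^\alpha B\chi_\Omega(x)^2$ into a near piece over $\wt Q$ (the union of $Q$ with its Whitney neighbors, still contained in $\Omega$) and a far piece over $\Omega\setminus\wt Q$. On $\wt Q$ the function $B\chi_\Omega$ is analytic, so
$$|B\chi_\Omega(x)-B\chi_\Omega(y)|\le |x-y|\,\|\partial B\chi_\Omega\|_{L^\infty(\wt Q)},$$
and \rf{eqclaim1} gives $\|\partial B\chi_\Omega\|_{L^\infty(\wt Q)}\lesssim M_Q:=\sum_{R\supset\phi(Q)}\beta_1(R)/\ell(R)+\diam(\Omega)^{-1}$. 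Integrating against the kernel $|x-y|^{-2-2\alpha}$ over $\wt Q$ then yields a contribution $\lesssim M_Q^2\,\ell(Q)^{2-2\alpha}$ to $D^\alpha B\chi_\Omega(x)^2$. Since $\ell(R)\ge\ell(Q)$ for every $R\supset\phi(Q)$ and $\alpha<1$, the resulting summand $M_Q\,\ell(Q)^{1-\alpha}$ is dominated, term by term, by $\sum_{R\supset\phi(Q)}\beta_1(R)/\ell(R)^\alpha + \diam(\Omega)^{-\alpha}$, as required.

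For the far piece I use the vanishing provided by Lemma \ref{lemmaPi2}: for each $y\in\Omega\setminus\wt Q$, lying in some Whitney cube $Q'$, I select a half plane $\Pi=\Pi_{Q,Q'}$ that approximates $\partial\Omega$ near some $R_{Q,Q'}\in\DD(\partial\Omega)$ of size $\approx D(Q,Q')$ and such that $x$ and $y$ lie in the same component of $\C\setminus\partial\Pi$. Then $B\chi_\Pi(x)=B\chi_\Pi(y)$ by Lemma \ref{lemmaPi2}, so
$$B\chi_\Omega(x)-B\chi_\Omega(y) = -\frac1\pi\int(\chi_\Omega-\chi_\Pi)(w)\Bigl[(x-w)^{-2}-(y-w)^{-2}\Bigr]\,dm(w),$$
and the mean-value bound $|(x-w)^{-2}-(y-w)^{-2}|\lesssim |x-y|/\min(|x-w|,|y-w|)^3$, together with a dyadic-annulus decomposition around $\phi(Q)$, Lemma \ref{lemdifsim} for the measure estimate $m(B_n\cap(\Omega\Delta\Pi))$, and Lemma \ref{lemmaCor} for summing the resulting geometric series in scales, reproduces a bound of the same shape as the near estimate. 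The main obstacle is this far piece: one must choose the family $\{\Pi_{Q,Q'}\}$ so that $x$ and $y$ genuinely lie on the same side of $\partial\Pi_{Q,Q'}$, and so that the annular sums yield exactly the geometrically convergent tail $\sum_{R\supset\phi(Q)}\beta_1(R)/\ell(R)^\alpha$ rather than a logarithmically divergent sum, which is what a naive pointwise bound on $B(\chi_{\Omega\Delta\Pi})$ would give. This is the key technical difference with Theorem \ref{teo}, where the stronger $1/|z|^3$ kernel of $\partial B$ spared one this difficulty.
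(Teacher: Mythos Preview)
The pointwise bound you aim for,
$$D^\alpha B\chi_\Omega(x) \lesssim \sum_{R\in\DD(\partial\Omega):\,R\supset\phi(Q)} \frac{\beta_1(R)}{\ell(R)^\alpha} + \diam(\Omega)^{-\alpha}\qquad(x\in Q),$$
is in general false. Consider a special Lipschitz domain whose boundary is the real axis except for a single triangular bump of height $h$ and base $w$ at distance $D$ from the origin, with $w\ll D$. For $x$ at unit height above the origin the right-hand side is $\approx hw/D^{2+\alpha}$, since every $R\supset\phi(Q)$ with $\ell(R)\lesssim D$ has $\beta_1(R)=0$ and for $\ell(R)\gtrsim D$ one has $\beta_1(R)\approx hw/\ell(R)^2$. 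But for $y\in\Omega$ at distance $\approx w$ from the bump one has $|B\chi_\Omega(y)-B\chi_\Pi(y)|\approx h/w$ (the symmetric difference has area $\approx hw$ and sits at distance $\approx w$ from $y$), while $B\chi_\Omega(x)\approx B\chi_\Pi(x)=B\chi_\Pi(y)$; integrating over such $y$ already gives $D^\alpha B\chi_\Omega(x)\gtrsim h/D^{1+\alpha}$, larger than your bound by the factor $D/w$. The quantity $D^\alpha B\chi_\Omega(x)$ genuinely feels the geometry of $\partial\Omega$ near \emph{every} $y\in\Omega$, and this cannot be encoded through $\phi(Q)$ alone; your far-piece scheme with a single half plane $\Pi_{Q,Q'}$ at scale $D(Q,Q')$ cannot recover it either, because at the small scales $\ell(Q)$ and $\ell(Q')$ that half plane may sit far from $\partial\Omega$.

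The paper therefore does not seek a pointwise bound. It keeps the full double sum $\sum_{Q,R\in\WW(\Omega)}\int_Q\int_R\cdots$ and, for far pairs ($2Q\cap 2R=\varnothing$), inserts an intermediate point $z_{x,y}$ at Whitney scale $\approx D(Q,R)$ and splits $B\chi_\Omega(x)-B\chi_\Omega(y)$ through it. Each half is estimated by a \emph{chain} of one-step comparisons between adjacent Whitney cubes (each step being exactly your near-piece estimate \rf{eqfo9}), so that $|B\chi_\Omega(x)-B\chi_\Omega(z_{x,y})|$ involves only $\beta_1(T)$ with $T\supset\phi(Q)$, while $|B\chi_\Omega(z_{x,y})-B\chi_\Omega(y)|$ involves only $\beta_1(T)$ with $T\supset\phi(R)$. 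The two resulting pieces ($I_{2,1}$ and $I_{2,2}$) are then summed separately over all Whitney pairs. Incidentally, the hypothesis $\alpha p>1$ enters in the $I_{2,2}$ bookkeeping (choosing the H\"older exponent $\ve$ there), not in the Cauchy--Schwarz step you describe, which converges for all $p>1$ just as in Section~\ref{sec5}.
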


Afterwards,  we will show that
$B(\chi_\Omega)\in\dot B^\alpha_{p,p}(\Omega)$:

\begin{lemma}\label{leminc2}
Under the assumptions of Theorem \ref{teo2}, for $0<\alpha<1$ and $1<p<\infty$ with $\alpha p>1$,
we have
$B(\chi_\Omega)\in\dot B^\alpha_{p,p}(\Omega)$ and
moreover,
$$\|B(\chi_\Omega)\|_{\dot B^\alpha_{p,p}(\Omega)}\lesssim
\|N\|_{\dot {B}_{p,p}^{\alpha-1/p}(\partial\Omega)}.$$
\end{lemma}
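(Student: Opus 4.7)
The plan mirrors the Whitney-cube argument of Section \ref{sec5}, now applied to the double-integral characterization
\begin{equation*}
\|B\chi_\Omega\|_{\dot B^\alpha_{p,p}(\Omega)}^p = \sum_{Q,R\in\WW(\Omega)} \iint_{Q\times R} \frac{|B\chi_\Omega(x)-B\chi_\Omega(y)|^p}{|x-y|^{\alpha p+2}}\,dm(x)\,dm(y).
\end{equation*}
I would split this sum into \emph{close} pairs with $D(Q,R)\approx\max(\ell(Q),\ell(R))$ and \emph{far} pairs with $D(Q,R)\gg\max(\ell(Q),\ell(R))$, and bound each contribution by $\|N\|^p_{\dot B^{\alpha-1/p}_{p,p}(\partial\Omega)}$.

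For close pairs, the analyticity of $B\chi_\Omega$ in $\Omega$ combined with a short path from $x$ to $y$ lying in $\Omega$ gives $|B\chi_\Omega(x)-B\chi_\Omega(y)|\lesssim |x-y|\cdot\sup|\partial B\chi_\Omega|$. The pointwise estimate \rf{eqclaim1} from the proof of Theorem \ref{teo}, namely
\begin{equation*}
|\partial B\chi_\Omega(z)|\lesssim S(Q):=\sum_{P\supset\phi(Q)}\frac{\beta_1(P)}{\ell(P)}+\frac{1}{\diam\Omega}\qquad (z\in Q),
\end{equation*}
together with the local integrability of $|x-y|^{p-\alpha p-2}$ in $\R^2$ (valid because $\alpha<1$), yields a close-pair contribution of the form $\sum_Q S(Q)^p\,\ell(Q)^{(1-\alpha)p+2}$. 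A H\"older split analogous to the one in Section \ref{sec5}, followed by a Fubini swap and a telescoping sum over the dyadic subcubes of $\partial\Omega$, then reduces this to $\sum_P \bigl(\beta_1(P)/\ell(P)^{\alpha-1/p}\bigr)^p\,\ell(P)$, which is dominated by $\|N\|^p_{\dot B^{\alpha-1/p}_{p,p}(\partial\Omega)}$ via Lemma \ref{lemdorron} and Remark \ref{rem11}.

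For far pairs, a direct pointwise bound on $|B\chi_\Omega(x)-B\chi_\Omega(y)|$ is needed. I would apply Lemma \ref{lemmaPi2}: choose a half plane $\Pi$ approximating $\partial\Omega$ at the scale $r\approx D(Q,R)$ near a boundary point close to the segment $[x,y]$, arranged so that both $x$ and $y$ lie in the same component of $\C\setminus\partial\Pi$. (When the relevant $\beta_1$'s preclude this, the desired estimate follows from absorbing the crude size bound into a $\beta_1$ term, exactly as in the derivation of \rf{eqclaim1}.) Since $B\chi_\Pi(x)=B\chi_\Pi(y)$ by Lemma \ref{lemmaPi2},
\begin{equation*}
B\chi_\Omega(x)-B\chi_\Omega(y) = \int\Bigl[\frac{1}{(x-z)^2}-\frac{1}{(y-z)^2}\Bigr]\bigl(\chi_\Omega-\chi_\Pi\bigr)(z)\,dm(z)
\end{equation*}
in the principal-value sense. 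Estimating the kernel by $|x-y|\,\max(|x-z|,|y-z|)^{-3}$, decomposing the integration into dyadic annuli around a boundary point, and controlling $m(B_n\cap(\Omega\Delta\Pi))$ by $\beta_1(P)$'s at the appropriate scales via Lemma \ref{lemdifsim}, a telescoping argument parallel to \rf{nucl} gives a pointwise bound of $|B\chi_\Omega(x)-B\chi_\Omega(y)|$ by $|x-y|$ times a weighted sum of $\beta_1(P)$'s over dyadic boundary cubes $P$. Inserting this into the double integral, collapsing the sum over $R$ (at fixed $Q$ and fixed scale of $P$) via Lemma \ref{lemmaCor}, and closing with the same H\"older-Fubini-Dorronsoro chain as before handles the far part.

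The principal obstacle is the far-pair analysis: for each pair $(Q,R)$, one must simultaneously produce a half plane $\Pi$ whose boundary has $x$ and $y$ on the same side and which well approximates $\Omega$ in a region enveloping $[x,y]$, requiring a careful choice of reference boundary cube and scale together with a case split based on the magnitude of the $\beta_1$'s. A secondary difficulty is controlling the non-compactly supported tail of the integral of $\chi_\Omega-\chi_\Pi$, which should be absorbed by the $|x-z|^{-3}$ decay of the kernel combined with the dyadic measure estimates of Lemma \ref{lemdifsim}. The final dyadic bookkeeping is tedious but runs in parallel with the $\dot W^{1,p}$ argument of Theorem \ref{teo}.
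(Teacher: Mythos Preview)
Your treatment of the close pairs is essentially the same as the paper's: the bound \rf{eqfo9} (which is what you obtain from analyticity plus the pointwise estimate \rf{eqclaim1}) feeds directly into the double integral, and the Cauchy--Schwarz/Fubini reduction to $\sum_P \beta_1(P)^p\ell(P)^{2-\alpha p}$ goes through as you say.

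The gap is in the far-pair argument. Your kernel bound
\[
\Bigl|\frac{1}{(x-z)^2}-\frac{1}{(y-z)^2}\Bigr|\;\lesssim\;\frac{|x-y|}{\max(|x-z|,|y-z|)^{3}}
\]
is false. The correct estimate is $|x-y|\,\min(|x-z|,|y-z|)^{-2}\max(|x-z|,|y-z|)^{-1}$, so when $z\in\Omega\Delta\Pi$ sits at distance $\approx\ell(Q)$ from $x$ (which happens, since $\dist(x,\partial\Omega)\approx\ell(Q)$ can be much smaller than $|x-y|\approx D(Q,R)$) the two kernel values do not cancel and you are left with $|x-z|^{-2}$. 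At that point you must control $\int_{\Omega\Delta\Pi_S}|x-z|^{-2}\,dm(z)$ with a half plane $\Pi_S$ chosen at the \emph{coarse} scale $D(Q,R)$; Lemma~\ref{lemdifsim} does not apply, since it is stated for the half plane at the finest scale and gives bounds only for larger balls. The ``telescoping argument parallel to \rf{nucl}'' does not transfer, because \rf{nucl} runs in the opposite direction (fine half plane, coarse annuli).

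The paper sidesteps this by never comparing $x$ and $y$ directly through a single half plane. Instead it inserts an intermediate point $z_{x,y}\in\Omega$ with $\dist(z_{x,y},\partial\Omega)\approx D(Q,R)$ and walks from $x$ up a dyadic chain $x=x_0,x_1,\ldots,x_m=z_{x,y}$ (and symmetrically from $y$), with $|x_i-x_{i+1}|\approx\dist(x_i,\partial\Omega)$ at each step. For each step one chooses the half plane at \emph{that} step's scale, so the mean-value kernel bound $|x_i-x_{i+1}|/|x_i-z|^3$ is legitimate. Summing the steps gives the estimate \rf{eqfo93}, which is then plugged into the $\dot B^\alpha_{p,p}$ double integral. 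Your single-half-plane approach can likely be repaired, but only by reintroducing this multiscale chain in some form.
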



We will prove both lemmas in the following sections.


\section{Proof of Lemma \ref{leminc1}}\label{sec7}

We have to show that 
$$\|D^{\alpha}(B\chi_{\Omega})\|_{L^p(\Omega)}\lesssim \|N\|_{\dot {B}_{p,p}^{\alpha-1/p}(\partial\Omega)},$$
where
$$D^{\alpha}(B\chi_{\Omega})(x)
=
\left(\int_{\Omega}\frac{|B\chi_{\Omega}(x)-B\chi_{\Omega}(y)|^{2}}{|x-y|^{2+2\alpha}}\,dm(y)\right)^{1/2}.$$
We will assume that $\Omega$ is a (bounded) Lipschitz domain.

Consider a decomposition of $\Omega$ into a family $\WW(\Omega)$ of Whitney squares.
We note that
\begin{align}\label{eqi1i2}
\|D^{\alpha}B\chi_{\Omega}\|_{p}^{p}
&=\sum_{Q\in \WW(\Omega)}\int_{Q}
\biggl(\sum_{R\in \WW(\Omega)}\int_{R}\frac{|B\chi_{\Omega}(x)-B\chi_{\Omega}(y)|^{2}}{|x-y|^{2+2\alpha}}\,dm(y)\biggr)^{p/2}dm(x)\\
&\lesssim\sum_{Q\in \WW(\Omega)}\int_{Q}
\biggl(\sum_{\substack{R\in\WW(\Omega):\\2R\cap2Q\neq\varnothing}}\int_{R}\frac{|B\chi_{\Omega}(x)-B\chi_{\Omega}(y)|^{2}}{|x-y|^{2+2\alpha}}\,dm(y)\biggr)^{p/2}dm(x)\nonumber\\
&\quad +\sum_{Q\in \WW(\Omega)}\int_{Q}\biggl(\sum_{\substack{R\in\WW(\Omega):\\2R\cap2Q=\varnothing}}
\int_{R}\frac{|B\chi_{\Omega}(x)-B\chi_{\Omega}(y)|^{2}}{|x-y|^{2+2\alpha}}\,dm(y)\biggr)^{p/2}dm(x) \nonumber\\
&=:I_{1}+I_{2}.\nonumber
\end{align}

\subsection{Estimate of $I_1$} \label{subsec7.1}
 From the properties of the Whitney decomposition, we know
that $\frac12\, Q\leq R\leq 2\ell(Q)$ for the squares $Q$ and $R$ involved in $I_1$. 
It follows easily that then $R\subset 8Q$. 
Let $L_Q$ be a line  that minimizes $\beta_1(Q)$ and
let $\Pi_Q$ be a half plane such that $\partial\Pi_Q=L_Q$ which contains $R$ and $Q$ 
(assuming $\beta_1(c_5\phi(Q))$ small enough, for some constant $c_5>1$).
From Lemma \ref{lemmaPi2}, we know that 
$B\chi_{\Pi_{Q}}(x)-B\chi_{\Pi_{Q}}(y)= 0$ for $x\in Q$ and $y\in R$.
Then we have
\begin{align}\label{eq:diff}
|B\chi_{\Omega}(x)-B\chi_{\Omega}(y)|&=|B\chi_{\Omega}(x)-B\chi_{\Omega}(y)-B\chi_{\Pi_{Q}}(x)+B\chi_{\Pi_{Q}}(y)|\\
&\leq\int_{\Omega\Delta\Pi_Q}\left|\frac{1}{(z-x)^{2}}-\frac{1}{(z-y)^{2}}\right|\,dm(z)\nonumber \\
&\lesssim\int_{\Omega\Delta\Pi_{Q}}\frac{|x-y|}{|z-x|^{3}}\,dm(z).\nonumber
\end{align}
Recall that, by the estimate \rf{nucl}, 
\begin{equation}
\int_{\Omega\Delta\Pi_{Q}}\frac{1}{|z-x|^{3}}\,dm(z)\lesssim\sum_{P\in\mathcal{D}(\partial\Omega):P\supset\phi(Q)}\frac{\beta_{1}(P)}{\ell(P)}+\frac{1}{\mathrm{diam}(\Omega)},\label{eq:re2}\end{equation}
and so we have
\begin{equation}\label{eqfo9}
|B\chi_{\Omega}(x)-B\chi_{\Omega}(y)|\lesssim |x-y|\,
\biggl(\,\sum_{P\in\mathcal{D}(\partial\Omega):P\supset\phi(Q)}\frac{\beta_{1}(P)}{\ell(P)}+\frac{1}{\mathrm{diam}(\Omega)}\biggr).
\end{equation}
It is easy to check that the preceding inequality also holds if $\beta_1(c_5\phi(Q))$ is not
assumed to be small.

Then, from \rf{eq:diff} and (\ref{eq:re2}) we deduce
\begin{align*}
I_{1} & \leq  \sum_{Q\in \WW(\Omega)}\int_{Q}\left(\int_{8Q}\frac{|B\chi_{\Omega}(x)-B\chi_{\Omega}(y)|^{2}}{|x-y|^{2+2\alpha}}dm(y)\right)^{p/2}\,dm(x)\\
 & \lesssim  \sum_{Q\in \WW(\Omega)}\int_{Q}\biggl(\int_{8Q}\frac{|x-y|^{2}}{|x-y|^{2+2\alpha}}\biggl(\,\sum_{P\in\mathcal{D}(\partial\Omega):P\supset\phi(Q)}\!\!\frac{\beta_{1}(P)}{\ell(P)}+\frac{1}{\mathrm{diam}(\Omega)}\biggr)^{2}\!dm(y)\biggr)^{p/2}dm(x)\\
 & \lesssim  \sum_{Q\in \WW(\Omega)}\int_{Q}\biggl(\ell(Q)^{2-2\alpha}\biggl(\,\sum_{P\in\mathcal{D}(\partial\Omega):P\supset\phi(Q)}\frac{\beta_{1}(P)}{\ell(P)}+\frac{1}{\mathrm{diam}(\Omega)}\biggr)^{2}\biggr)^{p/2}dm(x)\\
 & \lesssim  \sum_{Q\in \WW(\Omega)}\ell(Q)^{2+p-\alpha p}\biggl(\,\sum_{P\in\mathcal{D}(\partial\Omega):P\supset\phi(Q)}\frac{\beta_{1}(P)}{\ell(P)}+\frac{1}{\mathrm{diam}(\Omega)}\biggr)^{p}.
  \end{align*}

By the Cauchy-Schwarz inequality, it follows easily that, for any arbitrary $\ve>0$,
\begin{equation}\label{eqig55}
\biggl(\,\sum_{P\in\mathcal{D}(\partial\Omega):P\supset\phi(Q)}\frac{\beta_{1}(P)}{\ell(P)}
\biggr)^p\leq c \sum_{P\in\mathcal{D}(\partial\Omega):P\supset\phi(Q)}\frac{\beta_{1}(P)^{p}}{\ell(P)^{p-\ve}}\frac{1}{\ell(Q)^{\ve}},
\end{equation}
with $c$ depending on $\ve$.
Thus we get
  \begin{align}\label{eqig56}
 I_1
 & \lesssim  \sum_{Q\in \WW(\Omega)}\ell(Q)^{2+p-\alpha p-\ve}\!\!\!\sum_{P\in\mathcal{D}(\partial\Omega):P\supset\phi(Q)}\frac{\beta_{1}(P)^{p}}{\ell(P)^{p-\ve}}+\sum_{Q\in \WW(\Omega)}\frac{\ell(Q)^{2+p-\alpha p}}{\mathrm{diam}(\Omega)^p} \\
 & = \sum_{P\in \DD(\partial\Omega)}\frac{\beta_{1}(P)^{p}}{\ell(P)^{p-\ve}}\sum_{\phi(Q)
 \in\DD(\partial\Omega):\phi(Q)\subset P}\ell(Q)^{2+p-\alpha p-\ve}+\sum_{Q\in \WW(\Omega)}\frac{\ell(Q)^{2+p-\alpha p}}{\mathrm{diam}(\Omega)^p}. \nonumber
 \end{align}
Choosing $\ve$ small enough, we will have
 $2+p-\alpha p-\ve>1,$
which implies that 
$$\sum_{\phi(Q)
 \in\DD(\partial\Omega):\phi(Q)\subset P}\ell(Q)^{2+p-\alpha p-\ve}\lesssim
\ell(P)^{2+p-\alpha p-\ve}.$$
Analogously, we have
$\sum_{Q\in \WW(\Omega)}\ell(Q)^{2+p-\alpha p}\leq\diam(\Omega)^{2+p-\alpha p}$, since
$2+p-\alpha p>1$.
Therefore, we  obtain
$$I_1\lesssim \sum_{P\in \DD(\partial\Omega)}\beta_1(P)^{p}\,\ell(P)^{2-\alpha p}+\diam
(\Omega)^{2-\alpha p}.$$

\subsection{Estimate of $I_2$} \label{subsec7.2}
Now we deal with the term $I_{2}$ in \rf{eqi1i2}. Let $Q,R\in\WW(\Omega)$ satisfy
$2Q\cap2R=\varnothing$. Let $S_{Q,R}\in\DD(\partial\Omega)$ be such that $\phi(Q)\subset
S_{Q,R}$ and $\ell(S_{Q,R})\approx D(Q,R)$. 
Given $x\in Q$ and $y\in R$, let $z_{x,y}\in\Omega$ be the center of $\psi(S_{Q,R})$
(recall that $\psi(S_{Q,R})\in\WW(\Omega)$ was defined at the end of Subsection \ref{sec2.1}). 
Observe also that for $x\in Q$ and $y\in R$ with $2Q\cap2R=\varnothing$, we have
$|x-y|\approx D(Q,R)$.
We split $I_2$
as follows:
\begin{align}\label{eqdefiii}
I_{2} & \lesssim  
\sum_{Q\in \WW(\Omega)}\int_{Q}\biggl(\sum_{\substack{R\in\WW(\Omega):\\2R\cap2Q=\varnothing}}\int_{R}\frac{|B\chi_{\Omega}(x)-B\chi_{\Omega}(z_{x,y})|^{2}}{D(Q,R)^{2+2\alpha}}\,dm(y)\biggr)^{p/2}dm(x) \\
&\quad + \sum_{Q\in \WW(\Omega)}\int_{Q}\biggl(\sum_{\substack{R\in\WW(\Omega):\\2R\cap2Q=\varnothing}}\int_{R}\frac{|B\chi_{\Omega}(z_{x,y})-B\chi_{\Omega}(y)|^{2}}{D(Q,R)^{2+2\alpha}}\,dm(y)\biggr)^{p/2}dm(x)\nonumber\\
 & =  I_{2,1}+I_{2,2}.\nonumber
 \end{align}
 
\subsubsection{\bf Estimate of $I_{2,1}$.} 
Take cubes $Q_i\in\DD(\partial\Omega)$, $i=0,\ldots,m$, with $\ell(Q_i)=2^i\,\ell(\phi(Q))$
 such that
 $$\phi(Q)=Q_0\subset Q_1\subset Q_2\subset \ldots \subset Q_m =S_{Q,R}.$$
For $i=1,\ldots,m$, let $x_i$ be the center of $\psi(Q_i)$, and
set $x_0=x$ too. Then,
$$
|B\chi_{\Omega}(x)-B\chi_{\Omega}(z_{x,y})|  \leq  \sum_{i=0}^{m-1}|B\chi_{\Omega}(x_{i})-B\chi_{\Omega}(x_{i+1})|.$$
An estimate analogous to \rf{eqfo9} also holds, replacing $x$ by $x_i$ and $y$ by $x_{i+1}$.
Then we get
\begin{equation}\label{eqfo4}
|B\chi_{\Omega}(x)-B\chi_{\Omega}(z_{x,y})|  \lesssim
\sum_{\substack{P\in\DD(\partial\Omega):\\ \phi(Q)\subset P\subset S_{Q,R}}}\!\!\ell(P)\,\biggl(\,\sum_{
T\in\mathcal{D}(\partial\Omega):\,T\supset P}\frac{\beta_{1}(T)}{\ell(T)}+\frac{1}{\mathrm{diam}(\Omega)}\biggr).
\end{equation}
Now, recalling that $\ell(S_{Q,R})\approx D(Q,R)$, we obtain
\begin{align*}
\sum_{\substack{P\in\DD(\partial\Omega):\\ \phi(Q)\subset P\subset S_{Q,R}}}\!\!\ell(P)\sum_{
\substack{T\in\mathcal{D}(\partial\Omega):\\T\supset P}}\frac{\beta_{1}(T)}{\ell(T)}
& =
\sum_{\substack{T\in\DD(\partial\Omega):\\ T\supset\phi(Q)}}
\frac{\beta_{1}(T)}{\ell(T)}\,\sum_{\substack{P\in\DD(\partial\Omega):\\ \phi(Q)\subset P\subset 
S_{Q,R}\cap T}}
\!\ell(P)\\ &
\approx \sum_{\substack{T\in\DD(\partial\Omega):\\ T\supset\phi(Q)}}
\frac{\beta_{1}(T)}{\ell(T)}\,\min\bigl(\ell(T),D(Q,R)\bigr).
\end{align*}
Using the Cauchy-Schwarz inequality, it 
follows easily that, for any arbitrary $\ve$ with $0<\ve<p$,
$$\sum_{\substack{T\in\DD(\partial\Omega):\\ T\supset\phi(Q)}}
\frac{\beta_{1}(T)}{\ell(T)}\,\min\bigl(\ell(T),D(Q,R)\bigr)\leq \biggl(\,\,
\sum_{\substack{T\in\DD(\partial\Omega):\\ T\supset\phi(Q)}}
\frac{\beta_{1}(T)^p}{\ell(T)^\ve}\,D(Q,R)^\ve\biggr)^{1/p}.$$
The details are left for the reader. 

From \rf{eqfo4} and the last estimate we derive
\begin{equation}\label{eqfo93}
|B\chi_{\Omega}(x)-B\chi_{\Omega}(z_{x,y})|\lesssim D(Q,R)^{\ve/p}
\biggl(\,
\sum_{\substack{T\in\DD(\partial\Omega):\\ T\supset\phi(Q)}}
\frac{\beta_{1}(T)^p}{\ell(T)^\ve}\biggr)^{1/p} + \frac{D(Q,R)}{\diam(\Omega)}.
\end{equation}
Thus, for $x\in Q$, we have
\begin{align}\label{eqfo92}
\sum_{\substack{R\in\WW(\Omega):\\2R\cap2Q=\varnothing}} & \int_{R}
\frac{|B\chi_{\Omega}(x)-B\chi_{\Omega}(z_{x,y})|^{2}}{D(Q,R)^{2+2\alpha}}
\,dm(y) \\
&\lesssim 
\sum_{\substack{R\in\WW(\Omega):\\2R\cap2Q=\varnothing}}  
\frac{\ell(R)^2}{D(Q,R)^{2+2\alpha-2\ve/p}}
\biggl(\,
\sum_{\substack{T\in\DD(\partial\Omega):\\ T\supset\phi(Q)}}
\frac{\beta_{1}(T)^p}{\ell(T)^\ve}\biggr)^{2/p}\nonumber
\\
&\quad + \sum_{\substack{R\in\WW(\Omega):\\2R\cap2Q=\varnothing}}  
\frac{\ell(R)^2}{\diam(\Omega)^2\,D(Q,R)^{2\alpha}}.\nonumber
\end{align}
Concerning the first summand on the right side, notice that if $\ve$ is chosen small enough so that 
\begin{equation}\label{eqcond1e}
\alpha\,p-\ve >0,
\end{equation}
then
\begin{align*}
\sum_{\substack{R\in\WW(\Omega):\\2R\cap2Q=\varnothing}} & 
\frac{\ell(R)^2}{D(Q,R)^{2+2\alpha-2\ve/p}}
\biggl(\,
\sum_{\substack{T\in\DD(\partial\Omega):\\ T\supset\phi(Q)}} 
\frac{\beta_{1}(T)^p}{\ell(T)^\ve}\biggr)^{2/p}\\
& \lesssim \biggl(\,\sum_{\substack{T\in\DD(\partial\Omega):\\ T\supset\phi(Q)}}
\frac{\beta_{1}(T)^p}{\ell(T)^\ve}\biggr)^{2/p}
\int \frac1{\bigl(\ell(Q)+|x-y|\bigr)^{2+2\alpha-2\ve/p}}\,dm(y)\\
&\lesssim \biggl(\,\sum_{\substack{T\in\DD(\partial\Omega):\\ T\supset\phi(Q)}}
\frac{\beta_{1}(T)^p}{\ell(T)^\ve}\biggr)^{2/p}\,\frac1{\ell(Q)^{2\alpha-2\ve/p}}
\end{align*}

For the last summand in \rf{eqfo92}, using that $\diam(\Omega)\gtrsim D(Q,R)$, 
we get
\begin{align*}
 \sum_{\substack{R\in\WW(\Omega):\\2R\cap2Q=\varnothing}}  
\frac{\ell(R)^2}{\diam(\Omega)^2\,D(Q,R)^{2\alpha}}& \lesssim
\sum_{\substack{R\in\WW(\Omega)}}  
\frac{\ell(R)^2}{\diam(\Omega)^{2\alpha-1/p}\,D(Q,R)^{2+1/p}}\\
&\lesssim
 \sum_{P\in\DD(\partial\Omega)}  
\frac{\ell(P)^2}{\diam(\Omega)^{2\alpha-1/p}\,D(\phi(Q),P)^{2+1/p}}.
\end{align*}
Then, from Lemma \ref{lemmaCor} we deduce that
\begin{equation}\label{eqd33''}
 \sum_{\substack{R\in\WW(\Omega):\\2R\cap2Q=\varnothing}}  
\frac{\ell(R)^2}{\diam(\Omega)^2\,D(Q,R)^{2\alpha}}\lesssim 
\frac1{\diam(\Omega)^{2\alpha-1/p}\,\ell(Q)^{1/p}}.
\end{equation}

Therefore, we have
\begin{align*}
\sum_{\substack{R\in\WW(\Omega):\\2R\cap2Q=\varnothing}}  \int_{R} 
\frac{|B\chi_{\Omega}(x)-B\chi_{\Omega}(z_{x,y})|^{2}}{D(Q,R)^{2+2\alpha}}
\mathrm{d}y & \lesssim 
\biggl(\,\sum_{\substack{T\in\DD(\partial\Omega):\\ T\supset\phi(Q)}}
\frac{\beta_{1}(T)^p}{\ell(T)^\ve}\biggr)^{2/p}\,\frac1{\ell(Q)^{2\alpha-2\ve/p}}\\
&\quad
+ \frac1{\diam(\Omega)^{2\alpha-1/p}\,\ell(Q)^{1/p}}.
\end{align*}
Recalling the definition of $I_{2,1}$ in \rf{eqdefiii}, we get
\begin{align}\label{eqdf98}
I_{2,1}  &\lesssim  
\sum_{Q\in \WW(\Omega)}\int_{Q}\Biggl[
\biggl(\,\sum_{\substack{T\in\DD(\partial\Omega):\\ T\supset\phi(Q)}}
\frac{\beta_{1}(T)^p}{\ell(T)^\ve}\biggr)^{2/p}\,\frac1{\ell(Q)^{2\alpha-2\ve/p}}
+ \frac1{\diam(\Omega)^{2\alpha-1/p}\,\ell(Q)^{1/p}}\Biggr]^{p/2}\!dm(x)\\
&\lesssim
\sum_{Q\in \WW(\Omega)}
\biggl(\,\sum_{\substack{T\in\DD(\partial\Omega):\\ T\supset\phi(Q)}}
\frac{\beta_{1}(T)^p}{\ell(T)^\ve}\biggr)\,\frac{\ell(Q)^2}{\ell(Q)^{\alpha p-\ve}}
+ \sum_{Q\in \WW(\Omega)}\frac{\ell(Q)^2}{\diam(\Omega)^{\alpha p-1/2}\,\ell(Q)^{1/2}}\nonumber\\
& =
\sum_{Q\in \WW(\Omega)}\,\,
\sum_{\substack{T\in\DD(\partial\Omega):\\ T\supset\phi(Q)}}
\frac{\beta_{1}(T)^p}{\ell(T)^\ve}\,\ell(Q)^{2-\alpha p+\ve}
+ \sum_{Q\in \WW(\Omega)}\frac{\ell(Q)^{3/2}}{\diam(\Omega)^{\alpha p-1/2}}.\nonumber
\end{align}
Suppose now that
\begin{equation}\label{eqcond2e}
2-\alpha p+\ve>1.
\end{equation}
Then we get
\begin{align*}
\sum_{Q\in \WW(\Omega)}\,\,
\sum_{\substack{T\in\DD(\partial\Omega):\\ T\supset\phi(Q)}}
\frac{\beta_{1}(T)^p}{\ell(T)^\ve}\,\ell(Q)^{2-\alpha p+\ve}&
\approx 
\sum_{T\in\DD(\partial\Omega)}\frac{\beta_{1}(T)^p}{\ell(T)^\ve}
\sum_{P\in\DD(\partial\Omega):\,P\subset T}
\ell(P)^{2-\alpha p+\ve/2} \\
&\approx \sum_{T\in\DD(\partial\Omega)}\frac{\beta_{1}(T)^p}{\ell(T)^\ve}
\ell(T)^{2-\alpha p+\ve} = \sum_{T\in\DD(\partial\Omega)}\beta_{1}(T)^p
\ell(T)^{2-\alpha p}
\end{align*}
For the last summand on the right side of \rf{eqdf98} we use that
$$\sum_{Q\in \WW(\Omega)}\ell(Q)^{3/2}\approx \sum_{P\in \DD(\partial\Omega)}\ell(P)^{3/2}
\lesssim\diam(\Omega)^{3/2}.$$
So we finally obtain
$$I_{2,1}\lesssim \sum_{T\in\DD(\partial\Omega)}\beta_{1}(T)^p 
\ell(T)^{2-\alpha p} + \diam(\Omega)^{2-\alpha\,p}.$$

Notice now that if we choose $\ve=\alpha p - \alpha/2$, say, then $0<\ve<p$ and both 
\rf{eqcond1e} and \rf{eqcond2e} hold.

\subsubsection{\bf Estimate of $I_{2,2}$}
We argue as we did for $I_{2,1}$. We take $R_i\in\DD(\partial\Omega)$, $i=0,\ldots,m'$, with $\ell(R_i)=2^i\,\ell(\phi(R))$
 such that
 $$\phi(R)=R_0\subset R_1\subset R_2\subset\ldots\subset R_{m'} =:S_{R,Q},$$
 where $S_{R,Q}\in\DD(\partial\Omega)$ satisfies $\ell(S_{R,Q})\approx D(Q,R)$. 
Notice that 
$$\dist(S_{Q,R},S_{R,Q})\lesssim D(Q,R)\approx \ell(S_{Q,R}) \approx \ell(S_{R,Q}).$$
Thus $z_{x,y}$ (the center of $S_{Q,R}$) belongs to $c\,S_{R,Q}$, for some fixed constant $c>1$, and $\dist(z_{x,y},\partial\Omega)
\approx \ell(S_{R,Q})$.
Then, as in the case of $I_{2,1}$ in \rf{eqfo93}, for any $0<\ve<p$ (to be fixed later), we get
\begin{equation}\label{eqfo93'}
|B\chi_{\Omega}(y)-B\chi_{\Omega}(z_{x,y})|\lesssim D(Q,R)^{\ve/p}
\biggl(\,
\sum_{\substack{T\in\DD(\partial\Omega):\\ T\supset\phi(R)}}
\frac{\beta_{1}(T)^p}{\ell(T)^\ve}\biggr)^{1/p} + \frac{D(Q,R)}{\diam(\Omega)},
\end{equation}
and
for $x\in Q$, we have
\begin{align}\label{eqfo92'}
\sum_{\substack{R\in\WW(\Omega):\\2R\cap2Q=\varnothing}} & \int_{R}
\frac{|B\chi_{\Omega}(x)-B\chi_{\Omega}(z_{x,y})|^{2}}{D(Q,R)^{2+2\alpha}}
\,dm(y) \\
&\lesssim 
\sum_{\substack{R\in\WW(\Omega):\\2R\cap2Q=\varnothing}}  
\frac{\ell(R)^2}{D(Q,R)^{2+2\alpha-2\ve/p}}
\biggl(\,
\sum_{\substack{T\in\DD(\partial\Omega):\\ T\supset\phi(R)}}
\frac{\beta_{1}(T)^p}{\ell(T)^\ve}\biggr)^{2/p}\nonumber
\\
&\quad + \sum_{\substack{R\in\WW(\Omega):\\2R\cap2Q=\varnothing}}  
\frac{\ell(R)^2}{\diam(\Omega)^2\,D(Q,R)^{2\alpha}}.\nonumber
\end{align}

To simplify notation, denote
$$\alpha(R)=\sum_{\substack{T\in\DD(\partial\Omega):\\ T\supset\phi(R)}}
\frac{\beta_{1}(T)^p}{\ell(T)^\ve}.$$
Applying the Cauchy-Schwarz inequality to the first term on the right side of \rf{eqfo92'},
we obtain
\begin{multline}\label{eqdf83}
\biggl(\sum_{R\in\WW(\Omega)}  
\frac{\ell(R)^2}{D(Q,R)^{2+2\alpha-2\ve/p}} \,\alpha(R)^{2/p}\biggr)^{p/2}\\
\leq  \biggl(\sum_{R\in\WW(\Omega)}\alpha(R)\frac{\ell(R)^{a}}{D(Q,R)^{b}}\biggr)\biggl(\sum_{R\in\WW(\Omega)}\frac{\ell(R)^{1+\delta}}{D(Q,R)^{1+2\delta}}\biggr)^{\frac{p}{2}-1}.
\end{multline}
where $\delta>0$ will be chosen below and
\begin{align}\label{eqaaa}
a&=p-(1+\delta)\left(\frac{p}{2}-1\right),\\
b&=p+\alpha p-\ve-(1+2\delta)\left(\frac{p}{2}-1\right).\label{eqbbb}
\end{align}
By Lemma \ref{lemmaCor}, the last sum in \rf{eqdf83} is bounded by $c(\delta)/\ell(Q)^\delta$.
So we have
$$\biggl(\sum_{R\in\WW(\Omega)}  
\frac{\ell(R)^2}{D(Q,R)^{2+2\alpha-2\ve/p}} \,\alpha(R)^{2/p}\biggr)^{p/2}
\lesssim  \frac1{\ell(Q)^{\delta(\frac p2-1)}}\sum_{R\in\WW(\Omega)}\alpha(R)\frac{\ell(R)^{a}}{D(Q,R)^{b}}.$$

From \rf{eqfo92'}, the last estimate, \rf{eqd33''}, and the definition of $I_{2,2}$ in \rf{eqdefiii}, we get
\begin{align}\label{eqdf98'}
I_{2,2}  &\lesssim  
\sum_{Q\in \WW(\Omega)}\int_{Q}\;\biggl[
\frac1{\ell(Q)^{\delta(\frac p2-1)}}\sum_{R\in\WW(\Omega)}\alpha(R)\,\frac{\ell(R)^{a}}{D(Q,R)^{b}}
+ \frac1{\diam(\Omega)^{\alpha p-1/2}\,\ell(Q)^{1/2}}\biggr]\,dm(x)\\
&=
\sum_{Q\in \WW(\Omega)} \ell(Q)^{2-\delta\left(\frac{p}{2}-1\right)}\sum_{R\in\WW(\Omega)}\alpha(R)\,\frac{\ell(R)^{a}}{D(Q,R)^{b}}
+ \sum_{Q\in \WW(\Omega)}\frac{\ell(Q)^{3/2}}{\diam(\Omega)^{\alpha p-1/2}}.\nonumber
\end{align}
As in the case of $I_{1,2}$, to estimate the last sum we use that
$\sum_{Q\in \WW(\Omega)}\ell(Q)^{3/2}
\lesssim\diam(\Omega)^{3/2},$
and thus,
$$\sum_{Q\in \WW(\Omega)}\frac{\ell(Q)^{3/2}}{\diam(\Omega)^{p\alpha-1/2}}
\lesssim \diam(\Omega)^{2-\alpha\,p}.$$

Now we consider the first sum on the right side of \rf{eqdf98'}. This equals
\begin{align*}
J:=\sum_{Q\in \WW(\Omega)} \ell(Q)^{2-\delta\left(\frac{p}{2}-1\right)}&\sum_{R\in\WW(\Omega)}\,\,\sum_{\substack{T\in\DD(\partial\Omega):\\ T\supset\phi(R)}}
\frac{\beta_{1}(T)^p}{\ell(T)^\ve}\frac{\ell(R)^{a}}{D(Q,R)^{b}}\\
&\lesssim \sum_{T\in\DD(\partial\Omega)}\frac{\beta_{1}(T)^{p}}{\ell(T)^{\ve}}
\sum_{\substack{R\in\DD(\partial\Omega):\\R\subset T}}\ell(R)^{a}
\sum_{Q\in\DD(\partial\Omega)}\frac{\ell(Q)^{2-\delta\left(\frac{p}{2}-1\right)}}{D(Q,R)^{b}}.
\end{align*}
Assuming that
\begin{equation}\label{eqcondb}
b>2-\delta\left(\frac{p}{2}-1\right)>1,
\end{equation}
by Lemma \ref{lemmaCor} we obtain
\begin{align*} 
 J & \lesssim \sum_{T\in\DD(\partial\Omega)}\frac{\beta_{1}(T)^{p}}{\ell(T)^{\ve}}\sum_{\substack{R\in\DD(\partial\Omega):\\R\subset T}}\ell(R)^{a}\frac{1}{\ell(R)^{b-2+\delta\left(\frac{p}{2}-1\right)}}\\
& = \sum_{T\in\DD(\partial\Omega)}\frac{\beta_{1}(T)^{p}}{\ell(T)^{\ve}}\sum_{\substack{R\in\DD(\partial\Omega):\\R\subset T}}
 \ell(R)^{a-b+2-\delta\left(\frac{p}{2}-1\right)}
 \end{align*}
 Assuming also that
\begin{equation}\label{eqconda}
 a-b+2-\delta\left(\frac{p}{2}-1\right)>1,
\end{equation}
we get $\sum_{\substack{R\in\DD(\partial\Omega):R\subset T}}
 \ell(R)^{a-b+2-\delta\left(\frac{p}{2}-1\right)}\lesssim \ell(T)^{a-b+2-\delta\left(\frac{p}{2}-1\right)}$, and then,
$$J\lesssim \sum_{T\in\DD(\partial\Omega)}\frac{\beta_{1}(T)^{p}}{\ell(T)^{\ve}}\,
 \ell(T)^{a-b+2-\delta\left(\frac{p}{2}-1\right)} = \sum_{T\in\DD(\partial\Omega)}\beta_{1}(T)^p 
\ell(T)^{2-\alpha p}.$$

So finally we have
$$I_{2,2}\lesssim \sum_{T\in\DD(\partial\Omega)}\beta_{1}(T)^p 
\ell(T)^{2-\alpha p} + \diam(\Omega)^{2-\alpha\,p}.$$

Now it remains to check that the constants $\ve$ and $\delta$ can be chosen so that $0<\ve<p$,
$\delta>0$, and moreover
\rf{eqcondb} and \rf{eqconda} hold. We assume that $\delta>0$ is very small ($0<\delta\ll1$). 
Notice that the condition \rf{eqcondb} is equivalent to
$$\frac p2 + \alpha p +1 -\ve +O(\delta) >2+O(\delta)>1,$$
where, as usual, $O(\delta)$ stands for some term $\leq c\,\delta$, with $c$ possibly depending on $p$.
Since we need also $\ve<p$, the condition above suggests the choice
$$\ve=\min\left(p,\frac{p}{2}+\alpha p-1\right)-c_6\delta,$$
for some constant $c_6$ big enough (depending on $p$).
Let us see that indeed this is a good choice. It is clear that \rf{eqcondb} holds by construction,
and that $\ve<p$. 
 On the other hand, $\ve>0$ is equivalent to
\begin{equation}\label{eqcond52}
\frac{p}{2}+\alpha p-1>c_6\delta,
\end {equation}
which holds for $\delta$ small enough, under the assumption $\alpha p>1$ from the lemma.

Now we only have to check that \rf{eqconda} is also satisfied. By plugging the
values of $a$ and $b$, this is equivalent to
$$-\alpha p + \ve +1 >O(\delta).$$
This holds both if $\ve=p-c_6\delta$ (recall that $0<\alpha<1$), and also if
$\ve=\frac{p}{2}+\alpha p-1-c_6\delta$.

\subsection{The end of the proof}\label{subsecend}

From the estimates obtained for $I_1$, $I_{2,1}$ and $I_{2,2}$,
we deduce that
\begin{align*}
\|D^{\alpha}B\chi_{\Omega}\|_{L^p(\Omega)}^{p}&\lesssim \sum_{Q\in\DD(\partial\Omega)}\beta_{1}(Q)^p 
\ell(Q)^{2-\alpha p} + \diam(\Omega)^{2-\alpha\,p}\\
&= \sum_{Q\in\DD(\partial\Omega)}\biggl(\frac{\beta_{1}(Q)}{\ell(Q)^{\alpha-1/p}}\biggr)^p \ell(Q)
+ \diam(\Omega)^{2-\alpha\,p}\lesssim \|N\|_{\dot {B}_{p,p}^{\alpha-1/p}(\partial\Omega)}^p,
\end{align*}
by Lemma \ref{lemdorron} and the subsequent remark.

\subsection{The proof for special Lipschitz domains}
The arguments are very similar (and in fact, simpler) to the ones above for Lipschitz domains. The main difference stems
from the fact that the estimate \rf{nucl} holds without the summand 
$c/{\diam(\Omega)}$ on the right side.
As a consequence, all the terms above which involve $\diam(\Omega)$ do not appear in the case of
special Lipschitz domains.

\vv

\section{Proof of Lemma \ref{leminc2}}\label{sec8}

We have to show that 
\begin{equation}\label{eqda12}
\iint_{\Omega^2}\frac{|B\chi_{\Omega}(x)-B\chi_{\Omega}(y)|^{p}}{|x-y|^{2+\alpha p}}dm(x)
\,dm(y)\lesssim \|N\|_{\dot {B}_{p,p}^{\alpha-1/p}(\partial\Omega)}^p.
\end{equation}
First we will assume that $\Omega$ is a (bounded) Lipschitz domain.
The argument will be very similar, and even simpler, to the one in the preceding section for Lemma \ref{leminc1}.

Again  we consider a decomposition of $\Omega$ into a family $\WW(\Omega)$ of Whitney squares.
The integral above can be written as follows:
\begin{align}\label{eqi11}
\sum_{Q\in \WW(\Omega)} \sum_{R\in \WW(\Omega)} & \int_{Q}
\int_{R}\frac{|B\chi_{\Omega}(x)-B\chi_{\Omega}(y)|^{p}}{|x-y|^{2+\alpha p}}dm(x)
\, dm(y)\\
& =\sum_{Q\in \WW(\Omega)}
\sum_{\substack{R\in\WW(\Omega):\\2R\cap2Q\neq\varnothing}}\int_{Q}\int_{R}
\frac{|B\chi_{\Omega}(x)-B\chi_{\Omega}(y)|^{p}}{|x-y|^{2+\alpha p}}dm(x)
\, dm(y)\nonumber\\
&\quad +
\sum_{Q\in \WW(\Omega)}
\sum_{\substack{R\in\WW(\Omega):\\2R\cap2Q=\varnothing}}\int_{Q}\int_{R}
\frac{|B\chi_{\Omega}(x)-B\chi_{\Omega}(y)|^{p}}{|x-y|^{2+\alpha p}}dm(x) \nonumber
\, dm(y)\\
&=:I_{1}+I_{2}.\nonumber
\end{align}


\subsection{Estimate of $I_1$} 
As in Subsection \ref{subsec7.1}, now we have
  $R\subset 8Q$. 
From \rf{eqfo9} and \rf{eqig55}, for $x\in Q$ and $y\in R$ we infer that
$$
|B\chi_{\Omega}(x)-B\chi_{\Omega}(y)|^p\lesssim |x-y|^p\,\Biggl(
 \sum_{P\in\mathcal{D}(\partial\Omega):P\supset\phi(Q)}\frac{\beta_{1}(P)^{p}}{\ell(P)^{p-\ve}}\frac{1}{\ell(Q)^{\ve}}
+\frac{1}{\mathrm{diam}(\Omega)^p}\Biggr),
$$
for $\ve>0$. Thus we get
\begin{align*}
I_1\lesssim \!\sum_{Q\in\WW(\Omega)} \iint_{\begin{subarray}{l} x\in Q\\ y\in 8Q\end{subarray}}
\frac1{|x-y|^{2+\alpha p - p}}\,\Biggl(
 \sum_{\substack{P\in\mathcal{D}(\partial\Omega):\\P\supset\phi(Q)}}\frac{\beta_{1}(P)^{p}}{\ell(P)^{p-\ve}}\frac{1}{\ell(Q)^{\ve}}
+\frac{1}{\mathrm{diam}(\Omega)^p}\Biggr)dm(x)dm(y).
\end{align*}
Since $2+\alpha p - p<2$, we derive
$$I_1\lesssim \sum_{Q\in\WW(\Omega)} \Biggl(
 \sum_{\substack{P\in\mathcal{D}(\partial\Omega):\\P\supset\phi(Q)}}\frac{\beta_{1}(P)^{p}}{\ell(P)^{p-\ve}}\frac{1}{\ell(Q)^{\ve}}
+\frac{1}{\mathrm{diam}(\Omega)^p}\Biggr)\,\ell(Q)^{2-\alpha p +p}.$$
This is the same we got in \rf{eqig56}. So, as before, we obtain
 $$I_1\lesssim \sum_{P\in \DD(\partial\Omega)}\beta_1(P)^{p}\,\ell(P)^{2-\alpha p}+\diam
(\Omega)^{2-\alpha p}.$$


\subsection{Estimate of $I_2$}
Let $Q,R\in\WW(\Omega)$ be such that $2Q\cap2R=\varnothing$.
Given $x\in Q$ and $y\in R$, we define $z_{x,y}$ as in Subsection \ref{subsec7.2}.
From \rf{eqfo93} and \rf{eqfo93'} we deduce that
\begin{align*}
|B \chi_{\Omega}(x)&-B\chi_{\Omega}(y)|^p \lesssim |B\chi_{\Omega}(x)-B\chi_{\Omega}(z_{x,y})|^p +
|B\chi_{\Omega}(y)-B\chi_{\Omega}(z_{x,y})|^p\\
&\lesssim D(Q,R)^{\ve}
\Biggl(\,
\sum_{\substack{S\in\DD(\partial\Omega):\\ S\supset\phi(Q)}}
\frac{\beta_{1}(S)^p}{\ell(S)^\ve} + 
\sum_{\substack{T\in\DD(\partial\Omega):\\ T\supset\phi(R)}}
\frac{\beta_{1}(T)^p}{\ell(T)^\ve}\Biggr)
+
\frac{D(Q,R)^p}{\diam(\Omega)^p},
\end{align*} 
for $0<\ve<p$ to be fixed below.
Using also that $|x-y|\approx D(Q,R)$, we get
\begin{align*}
I_2\lesssim 
 \sum_{Q\in \WW(\Omega)} &
\sum_{R\in\WW(\Omega)}\int_{Q} \int_{R}
\frac1{D(Q,R)^{2+\alpha p}}\\
&\!\!\!\!\!\!\!\Biggl[
D(Q,R)^{\ve}
\Biggl(\,
\sum_{\substack{S\in\DD(\partial\Omega):\\ S\supset\phi(Q)}}
\frac{\beta_{1}(S)^p}{\ell(S)^\ve} + 
\sum_{\substack{T\in\DD(\partial\Omega):\\ T\supset\phi(R)}}
\frac{\beta_{1}(T)^p}{\ell(T)^\ve}\Biggr)
+
\frac{D(Q,R)^p}{\diam(\Omega)^p}
\Biggr]
dm(x)
\, dm(y).
\end{align*}
Then, because of the symmetry on $Q$ and $R$, 
\begin{align*}
I_2 &\lesssim 
 \sum_{Q\in \WW(\Omega)}
\sum_{R\in\WW(\Omega)}\int_{Q} \int_{R} \biggl[
\frac1{D(Q,R)^{2+\alpha p-\ve}}
\sum_{\substack{S\in\DD(\partial\Omega):\\ S\supset\phi(Q)}}\!\!
\frac{\beta_{1}(S)^p}{\ell(S)^\ve} \\
&\quad
+
\frac{1}{\diam(\Omega)^p\,D(Q,R)^{2+\alpha p-p}}\biggr]
dm(x) \, dm(y)\\
& =  
\sum_{Q\in \WW(\Omega)} \sum_{\substack{S\in\DD(\partial\Omega):\\ S\supset\phi(Q)}}
\frac{\beta_{1}(S)^p}{\ell(S)^\ve} \,\ell(Q)^2
\sum_{R\in\WW(\Omega)}\frac{\ell(R)^2}{D(Q,R)^{2+\alpha p-\ve}}\\
&\quad+
\sum_{Q\in \WW(\Omega)} 
\sum_{R\in\WW(\Omega)} \frac{\ell(Q)^2\,\ell(R)^2}{\diam(\Omega)^p\,D(Q,R)^{2+\alpha p-p}}. 
\end{align*}
The terms on the right side are estimate following the ideas used for $I_{2,1}$ in Subsection
\ref{subsec7.2}.
By Lemma \ref{lemmaCor}, we have
$$\sum_{R\in\WW(\Omega)}\frac{\ell(R)^2}{D(Q,R)^{2+\alpha p-\ve}}\lesssim \frac1{\ell(Q)^{\alpha p
-\ve}},$$
assuming 
$$\alpha p -\ve >0.$$
Therefore,
\begin{align*}
\sum_{Q\in \WW(\Omega)}  \sum_{\substack{S\in\DD(\partial\Omega):\\ S\supset\phi(Q)}}\!
\frac{\beta_{1}(S)^p}{\ell(S)^\ve} \,\ell(Q)^2\!\! &
\sum_{R\in\WW(\Omega)}\frac{\ell(R)^2}{D(Q,R)^{2+\alpha p-\ve}} 
\lesssim  \!\!\sum_{S\in\DD(\partial\Omega)}\! \frac{\beta_{1}(S)^p}{\ell(S)^\ve}\sum_{\substack{Q\in \WW(\Omega):\\
\phi(Q)\subset S}}
 \!\ell(Q)^{2-\alpha p+\ve} \\
& \approx \sum_{S\in\DD(\partial\Omega)} \frac{\beta_{1}(S)^p}{\ell(S)^\ve} \,\ell(S)^{2-\alpha p+\ve}
= \sum_{S\in\DD(\partial\Omega)} \beta_{1}(S)^p\,\ell(S)^{2-\alpha p},
\end{align*}
assuming also that 
$$2-\alpha p+\ve>1$$
in the second estimate.
Finally, arguing as in the case of $I_{1,2}$ (see \rf{eqd33''}), we also get
$$\sum_{Q\in \WW(\Omega)} 
\sum_{R\in\WW(\Omega)} \frac{\ell(Q)^2\,\ell(R)^2}{\diam(\Omega)^p\,D(Q,R)^{2+\alpha p-p}}\lesssim
\diam
(\Omega)^{2-\alpha p}.$$

If we choose $\ve = \alpha p -\alpha /2$, then all the above conditions involving $\ve$ are satisfied, 
and so we have
$$I_2\lesssim
\sum_{Q\in \DD(\partial\Omega)}\beta_1(Q)^{p}\,\ell(Q)^{2-\alpha p}+\diam
(\Omega)^{2-\alpha p}.$$
Together with the estimates obtained for $I_1$, using Lemma \ref{lemdorron} and the subsequent remark,
this yields \rf{eqda12}.

\vv

For special Lipschitz domains, the arguments are very similar to the ones above. 
The difference stems
from the fact that the estimate \rf{nucl} holds without the summand 
$c/{\diam(\Omega)}$ on the right side, and thus
 all the terms above which involve $\diam(\Omega)$ do not appear in the case of
special Lipschitz domains.


\section{The case $\alpha\, p\leq 1$ and a final remark}\label{secfi}

\subsection{The case $\alpha\, p\leq 1$}
In this situation, the estimate
\begin{equation}\label{eqd444}
\sum_{Q\in\DD(\partial\Omega)}\biggl(\frac{\beta_{1}(Q)}{\ell(Q)^{\alpha-1/p}}\biggr)^p \ell(Q)
\lesssim \|N\|_{\dot {B}_{p,p}^{\alpha-1/p}(\partial\Omega)}^p
\end{equation}
no longer holds, since for the application of Lemmas \ref{lemanorm} and \ref{lemdorron} one
needs $0<\alpha-1/p<1$. However, if $\Omega$ is a special Lipschitz domain, by Dorronsoro's theorem we still have
$$\sum_{Q\in\DD(\partial\Omega)}\biggl(\frac{\beta_{1}(Q)}{\ell(Q)^{\alpha-1/p}}\biggr)^p \ell(Q)
\lesssim \|A\|_{\dot {B}_{p,p}^{1+\alpha-1/p}}^p,$$
where $A:\R\to\R$ is the Lipschitz function that parameterizes $\partial\Omega$. In the case 
$\Omega$ is a bounded Lipschitz domain, then the sum above can be estimate also in terms of the
$\dot {B}_{p,p}^{1+\alpha-1/p}(\R)$ norms of the local parameterizations of $\partial
\Omega$.

In the proof of Theorem \ref{teo}, apart from the estimate \rf{eqd444}, all the other arguments
work for $p=1$. Then one obtains:

\begin{theorem}\label{teo'}
Let $\Omega\subset\C$ be a Lipschitz domain such that in each ball $B(z,R)$, 
with $z\in\partial\Omega$, $\partial\Omega\cap B(z,R)$ coincides with the graph of a Lipschitz
function $A_z:\R\to\R$ such that $A_z\in \dot {B}_{1,1}^1(\R)$, then
$\partial B(\chi_\Omega)\in L^1(\Omega)$.

If $\Omega$ is a special Lipschitz domain, so that $\Omega=\{(x,y)\in\C:\,
y>A(x)\}$, where $A:\R\to\R$ is a Lipschitz function with $\|A'\|_\infty\leq\delta$ . Then we have
$$\|\partial B(\chi_\Omega)\|_{L^1(\Omega)}\leq c \,\|A\|_{\dot {B}_{1,1}^1},$$
with $c$ depending on $\delta$.
\end{theorem}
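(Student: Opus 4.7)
The plan is to repeat the argument of Theorem~\ref{teo} with $p=1$ throughout, since every step there remains valid except the final passage from $\sum_P\bigl(\beta_1(P)/\ell(P)^{1-1/p}\bigr)^p\ell(P)$ to the Besov norm of the normal~$N$, which requires $0<\alpha-1/p<1$ via Lemmas~\ref{lemanorm} and~\ref{lemdorron}. In the endpoint case I instead invoke Dorronsoro's characterization directly for the Lipschitz function $A$, which still provides the comparison with $\|A\|_{\dot{B}^1_{1,1}}$.

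I would first treat the special Lipschitz case. Tracing the proof of Theorem~\ref{teo} with $p=1$, the pointwise bound \rf{eqclaim1} reduces (since no $1/\diam(\Omega)$ term appears for special Lipschitz domains, by the final paragraph of Section~\ref{sec5}) to
$$|\partial B\chi_\Omega(z)|\leq c_3\sum_{R\in\DD(\partial\Omega):R\supset\phi(Q)}\frac{\beta_1(R)}{\ell(R)},\qquad z\in Q\in\WW(\Omega).$$
Integrating over $Q$, exchanging the order of summation, and using $\sum_{Q\in\WW(\Omega):\phi(Q)\subset P}\ell(Q)^2\lesssim\ell(P)^2$ (the same geometric-series argument used in Section~\ref{sec5}), I would obtain
$$\|\partial B\chi_\Omega\|_{L^1(\Omega)}\lesssim\sum_{P\in\DD(\partial\Omega)}\beta_1(\partial\Omega,P)\,\ell(P).$$
Since $\partial\Omega$ is the graph of $A$ with $\|A'\|_\infty\leq\delta$, comparing $\beta_1(\partial\Omega,P)$ with $\beta_1(A,I_P)$ for the corresponding interval $I_P\subset\R$ of length $\ell(I_P)\approx\ell(P)$ (via the bilipschitz parametrization $x\mapsto(x,A(x))$) and then applying Dorronsoro's theorem with $\alpha=1$, $p=q=1$, one gets
$$\sum_{I\in\DD(\R)}\beta_1(A,I)\,\ell(I)\approx\|A\|_{\dot{B}^1_{1,1}},$$
which closes the special Lipschitz case.

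For the bounded Lipschitz domain, the extra $c/\diam(\Omega)$ in \rf{eqclaim1} only contributes $c\,m(\Omega)/\diam(\Omega)\lesssim\diam(\Omega)<\infty$, so it suffices to prove $\sum_P\beta_1(\partial\Omega,P)\,\ell(P)<\infty$. I would localize as in the proof of Lemma~\ref{lemdorron}: take a finite covering $\{B(z_i,R/4)\}_{i=1}^m$ of $\partial\Omega$ and, for each $i$, multiply the local parametrization $A_{z_i}$ by a cutoff $\varphi\in\CC^\infty_c(\R)$ with $\varphi=1$ on $[-R/2,R/2]$ and $\supp\varphi\subset[-3R/4,3R/4]$. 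The product-rule computation of Lemma~\ref{lemdorron}, now applied directly to $\wt A_{z_i}=\varphi\,A_{z_i}$ rather than to its derivative, would give
$$\sum_{P\in\DD(\partial\Omega):P\subset B(z_i,R/2)}\beta_1(\partial\Omega,P)\,\ell(P)\lesssim\|\wt A_{z_i}\|_{\dot{B}^1_{1,1}}+c\,R\lesssim\|A_{z_i}\|_{\dot{B}^1_{1,1}}+c\,R.$$
Cubes $P$ not contained in any $B(z_i,R/2)$ satisfy $\ell(P)\gtrsim R$; they are finite in number and contribute a bounded amount. Summing over $i$ yields $\partial B\chi_\Omega\in L^1(\Omega)$.

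The main obstacle I anticipate is the product-rule control of $\|\wt A_{z_i}\|_{\dot{B}^1_{1,1}}$ in terms of $\|A_{z_i}\|_{\dot{B}^1_{1,1}}$ together with the comparison between $\beta_1(\partial\Omega,P)$ and $\beta_1(A,I_P)$ at the endpoint exponent. Lemma~\ref{lemanorm}, which linked the normal $N$ to $A'$ in $\dot{B}^\alpha_{p,p}$, is only available for $0<\alpha<1$, so at $\alpha=1$ the Besov norm of $A$ itself (rather than of $A'$) must be handled. The commutator with the cutoff $\varphi$ and the passage between $\beta_1$'s on $\partial\Omega$ and on $\R$ need to be checked with some care, although the estimates are essentially elementary once Dorronsoro's theorem is used in its integer-regularity form.
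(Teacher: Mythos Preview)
Your proposal is correct and follows exactly the approach the paper indicates: rerun the proof of Theorem~\ref{teo} with $p=1$, noting that every step survives except the final passage through Lemmas~\ref{lemanorm} and~\ref{lemdorron}, and replace that passage by a direct appeal to Dorronsoro's characterization $\sum_I\beta_1(A,I)\,\ell(I)\approx\|A\|_{\dot B^1_{1,1}}$. Your treatment of the bounded case via localization and the handling of the $\diam(\Omega)$ term are precisely what the paper has in mind when it says the sum ``can be estimated also in terms of the $\dot B^{1+\alpha-1/p}_{p,p}(\R)$ norms of the local parameterizations''; the technical points you flag (product rule for $\wt A$ in $\dot B^1_{1,1}$, comparison of $\beta_1(\partial\Omega,P)$ with $\beta_1(A,I_P)$) are routine and the paper does not spell them out either.
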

 
Analogously, for $0<\alpha<1$ and $1\leq p<\infty$ with $\alpha\,p\leq 1$, all the arguments in
the proof of Lemmas \ref{leminc1} and \ref{leminc2} work with the exception of \rf{eqd444}, under 
the additional assumption that
\begin{equation}\label{eqas64}
\alpha\,p +\frac p2>1
\end{equation}
in the case of Lemma \ref{leminc1} (this is used in \rf{eqcond52}). 

Recalling also that
\begin{equation}\label{eqd447}
\|B(\chi_\Omega)\|_{\dot W^{\alpha,p}(\Omega)}\lesssim \|B(\chi_\Omega)\|_{\dot B^\alpha_{p,p}(\Omega)}
\qquad \mbox{if $1< p\leq2$,}
\end{equation}
it turns out that, to estimate $\|B(\chi_\Omega)\|_{\dot W^{\alpha,p}(\Omega)}$ we can apply Lemma
\ref{leminc2} for $1<p\leq2$, and use Lemma \ref{leminc1} for the case $p\geq2$, so that the assumption \rf{eqas64} is fulfilled.

To summarize, we have:

\begin{theorem}\label{teo2'}
Let $\Omega\subset\C$ be either a Lipschitz or 
a special Lipschitz domain, 
and let $1\leq p<\infty$ and $0<\alpha<1$. Suppose that the Lipschitz functions $A_z$ 
which give the local parameterization of $\partial \Omega$ in case $\Omega$ is bounded (defined as in Theorem \ref{teo'}), or the function $y=A(x)$ if $\Omega$ is a special Lipschitz domain (with
$A$ compactly supported), 
belong to $\dot {B}_{p,p}^{1+\alpha-1/p}(\R)$.
Then,
\begin{itemize}
\item If $p>1$ or, in the case $p=1$,  $\alpha>\dfrac 12$, then $B(\chi_\Omega)\in \dot W^{\alpha,p}(\Omega)$.  Moreover, if $\Omega$ is a special Lipschitz domain, then
$$\|B(\chi_\Omega)\|_{\dot W^{\alpha,p}(\Omega)}\lesssim
\|A\|_{\dot {B}_{p,p}^{1+
\alpha-1/p}}.$$

\item For $1\leq p <\infty$,  $B(\chi_\Omega)\in \dot B^\alpha_{p,p}(\Omega)$.
 Moreover, if $\Omega$ is a special Lipschitz domain, then
$$\|B(\chi_\Omega)\|_{\dot B^\alpha_{p,p}(\Omega)}\lesssim
\|A\|_{\dot {B}_{p,p}^{1+
\alpha-1/p}}.$$

\end{itemize}
\end{theorem}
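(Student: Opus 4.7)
The plan is to revisit the proofs of Lemmas \ref{leminc1} and \ref{leminc2} and identify the unique step in each that ceases to work when $\alpha p \leq 1$: namely, the application of Lemma \ref{lemdorron} at the very end (see Subsection \ref{subsecend}), which transfers the $\beta_1$-sum into $\|N\|_{\dot B^{\alpha-1/p}_{p,p}(\partial\Omega)}$. Everything upstream of that step requires only $0 < \alpha < 1$ and $1 \leq p < \infty$, as direct inspection of Sections \ref{sec7} and \ref{sec8} confirms. Accordingly, the first task is to prove the substitute estimate
$$\sum_{Q\in\DD(\partial\Omega)}\biggl(\frac{\beta_{1}(Q)}{\ell(Q)^{\alpha-1/p}}\biggr)^p \ell(Q) \lesssim \|A\|_{\dot B^{1+\alpha-1/p}_{p,p}(\R)}^p$$
for special Lipschitz domains parameterized by $A$, together with the analogous bound in terms of the local parameterizations $A_z$ for bounded Lipschitz domains. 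The latter is obtained from the former by a partition of unity subordinate to a finite covering of $\partial\Omega$ by balls $B(z_i,R)$, exactly as in the proof of Lemma \ref{lemdorron}.

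For the substitute estimate itself, since $\partial\Omega$ is a Lipschitz graph of $A$ one has $\beta_1(\partial\Omega, Q) \approx \beta_1(A, I)$ for the corresponding dyadic interval $I \subset \R$ of length $\ell(I) \approx \ell(Q)$. One then needs Dorronsoro's theorem at the regularity $\gamma := 1 + \alpha - 1/p$:
$$\sum_I \biggl(\frac{\beta_1(A,I)}{\ell(I)^{\gamma-1}}\biggr)^p \ell(I) \lesssim \|A\|_{\dot B^\gamma_{p,p}}^p.$$
The stated range $1 \leq \gamma < 2$ of \cite{Dorronsoro2} covers the endpoint $\alpha p = 1$ directly. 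For $\alpha p < 1$ the regularity $\gamma$ falls into $(0,1)$, and I would extend the estimate there by combining the second-difference characterization of $\dot B^\gamma_{p,p}$ with the pointwise bound $\beta_1(A,I) \lesssim \ell(I)^{-1}\int_{3I} \sup_{|h|\leq \ell(I)} |\Delta_h^2 A(x)|\,dx$, which follows from rewriting the affine approximation error as a second difference.

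With the $\beta_1$-sum now controlled by $\|A\|_{\dot B^{1+\alpha-1/p}_{p,p}}^p$, the proof of Lemma \ref{leminc2} carries over verbatim for every $1 \leq p < \infty$ and $0 < \alpha < 1$, delivering the Besov conclusion. For the Sobolev conclusion, the proof of Lemma \ref{leminc1} also carries over, provided the auxiliary constraint \rf{eqas64}, $\alpha p + p/2 > 1$, holds (it is used in \rf{eqcond52} to arrange $0 < \varepsilon < p$ satisfying both \rf{eqcondb} and \rf{eqconda}). For $p > 2$ this is automatic since $p/2 > 1$; for $p=1$ it reduces to $\alpha > 1/2$, exactly the hypothesis. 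In the remaining range $1 < p \leq 2$, where \rf{eqas64} may fail when $\alpha p \leq 1$, I would instead derive the Sobolev bound from the Besov one using the continuous embedding $\|\cdot\|_{\dot W^{\alpha,p}(\Omega)} \lesssim \|\cdot\|_{\dot B^\alpha_{p,p}(\Omega)}$ recorded at the end of Section \ref{sec2}. This covers all cases in the theorem.

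The main obstacle is the extension of Dorronsoro's theorem to regularity $\gamma \in (0,1)$ with the $\beta_1$-coefficients: at that range the natural characterization uses $\beta_0$ rather than $\beta_1$, and the Lipschitz nature of $A$ (equivalently, the second-difference representation) is essential to obtain a one-sided bound with $\beta_1$ on the left. Once this technical point is settled (with constants uniform in the Lipschitz norm), everything else reduces to bookkeeping: tracking the dependence on $\alpha p$ in Sections \ref{sec7}--\ref{sec8} to ensure no hidden use of $\alpha p > 1$, and verifying that the terms involving $\mathrm{diam}(\Omega)$ continue to vanish in the special Lipschitz setting as in the original arguments.
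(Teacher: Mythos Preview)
Your approach is essentially the same as the paper's: both identify that only the final passage from the $\beta_1$-sum to the Besov norm of $N$ (via Lemma \ref{lemdorron}) fails when $\alpha p\leq1$, replace it by Dorronsoro's bound in terms of $\|A\|_{\dot B^{1+\alpha-1/p}_{p,p}}$, run Lemma \ref{leminc2} unchanged for the Besov conclusion, run Lemma \ref{leminc1} under the auxiliary constraint \rf{eqas64} for the Sobolev conclusion, and cover the remaining range $1<p\leq2$ by the embedding $\|\cdot\|_{\dot W^{\alpha,p}(\Omega)}\lesssim\|\cdot\|_{\dot B^\alpha_{p,p}(\Omega)}$.

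One remark: you overcomplicate the ``main obstacle.'' For $\gamma=1+\alpha-1/p\in(0,1)$ you do not need the Lipschitz hypothesis or a second-difference argument to get the one-sided Dorronsoro bound with $\beta_1$ on the left. Since affine functions contain constants, $\beta_1(A,I)\leq\beta_0(A,I)$ trivially, and Dorronsoro's characterization of $\dot B^\gamma_{p,p}$ for $0<\gamma<1$ in terms of $\beta_0$ (degree-zero approximation) immediately yields the required upper bound with $\beta_1$. The paper simply invokes Dorronsoro without further comment, relying on the remark at the end of Subsection \ref{subsec2.3} that the characterization extends to all positive regularity indices with the appropriate polynomial degree.
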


\vv

Finally, notice that if $\alpha\,p<1$ and $A$ is Lipschitz with compact support, then
$\|A\|_{\dot B_{p,p}^{1+\alpha-1/p}}<\infty$, since $1+\alpha-1/p<1$. As a consequence,
$$B(\chi_\Omega)\in \dot W^{\alpha,p}(\Omega) \quad\mbox{if $\alpha\,p +\frac p2>1$,}$$
and 
$$B(\chi_\Omega)\in \dot B^\alpha_{p,p}(\Omega).$$ 
From the last two statements and \rf{eqd447}, we infer that
$$B(\chi_\Omega)\in \dot W^{\alpha,p}(\Omega) \quad\mbox{if $p>1$.}$$ 
So we have:

\begin{theorem}\label{teo3'}
Let $\Omega\subset\C$ be either a Lipschitz or 
a special Lipschitz domain. Let $1<p<\infty$ and $0<\alpha<1$ be such that $\alpha\,p<1$.
Then,  $B(\chi_\Omega)\in \dot W^{\alpha,p}(\Omega) \cap \dot B^\alpha_{p,p}(\Omega)$.
\end{theorem}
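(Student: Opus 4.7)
The plan is to obtain Theorem \ref{teo3'} as an immediate consequence of Theorem \ref{teo2'} and the Sobolev--Besov inclusion \rf{eqd447}. The key preliminary observation is that when $\alpha p<1$ one has $1+\alpha-1/p<1$, and hence any compactly supported Lipschitz function $A:\R\to\R$ lies in $\dot B_{p,p}^{1+\alpha-1/p}(\R)$. This can be checked directly from the difference characterization \rf{eqgg0}: splitting the integral
$$\iint_{\R\times\R}\frac{|\Delta_h A(x)|^p}{|h|^{(1+\alpha-1/p)p+1}}\,dx\,dh$$
at $|h|=1$, the inner region is controlled using $|\Delta_h A(x)|\leq \|A'\|_\infty |h|$ together with $1-\alpha p>0$, while the outer region is controlled using $\|\Delta_h A\|_p^p\lesssim \|A\|_p^p$ and the fact that the relevant exponent in $h$ is integrable at infinity. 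For a bounded Lipschitz domain, the same argument applied to the local parameterizations of $\partial\Omega$ yields the analogous conclusion.

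With this observation in hand, the Besov membership $B(\chi_\Omega)\in\dot B_{p,p}^\alpha(\Omega)$ follows from the second bullet of Theorem \ref{teo2'}, which places no restriction beyond $1\leq p<\infty$ and $0<\alpha<1$. For the Sobolev membership $B(\chi_\Omega)\in\dot W^{\alpha,p}(\Omega)$, I would split into cases according to whether $p\geq 2$ or $1<p<2$. In the first case, $\alpha p+p/2\geq p/2\geq 1$, so condition \rf{eqas64} is satisfied and the first bullet of Theorem \ref{teo2'} applies directly. In the second case, \rf{eqas64} may fail, but the embedding \rf{eqd447}, valid for $1<p\leq 2$, combined with the Besov bound just obtained, yields the Sobolev membership.

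I do not foresee any genuine obstacle, since both halves of Theorem \ref{teo2'} have been established in Sections \ref{sec7} and \ref{sec8}, and the embedding \rf{eqd447} was recalled in Section \ref{sec2}. The only nontrivial point, the sub-unit Besov regularity of compactly supported Lipschitz functions, is a standard integral estimate. Bridging the parameterizations and their derivatives (namely converting the hypothesis on $A$ in $\dot B_{p,p}^{1+\alpha-1/p}$ into information usable for the $\beta_1$ sums driving the proofs of Lemmas \ref{leminc1} and \ref{leminc2}) has already been handled by Lemma \ref{lemanorm} together with \rf{eqn67}.
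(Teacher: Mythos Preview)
Your proposal is correct and follows essentially the same route as the paper: observe that $\alpha p<1$ forces $1+\alpha-1/p<1$ so that compactly supported Lipschitz parameterizations lie in $\dot B_{p,p}^{1+\alpha-1/p}$, then invoke Theorem \ref{teo2'} for the Besov membership and, for the Sobolev membership, use the first bullet when $p\geq 2$ and the embedding \rf{eqd447} when $1<p\leq 2$. One tiny wording issue: for $p\geq 2$ you write $\alpha p+p/2\geq p/2\geq 1$, but condition \rf{eqas64} is a strict inequality; the fix is immediate since $\alpha p>0$ gives $\alpha p + p/2 > 1$.
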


\vv
\subsection{A final remark}

The techniques and results in this paper can be extended easily to the case of even homogeneous 
Calder\'on-Zygmund
operators in $\R^n$. Indeed, if $T:L^p(\R^n)\to L^p(\R^n)$ is such an operator, then
for any ball $B\subset \R^n$, 
$$T\chi_B(x) = 0\qquad \mbox{for $x\in B$.}$$
See Lemma 3 from \cite{Mateu-Orobitg-Verdera}.
From this result, it turns out that $\nabla T\chi_B = 0$ on $B$ and also, for any half hyperplane $\Pi\subset
\R^n$, 
$$\nabla T\chi_\Pi=0 \qquad\mbox{for $x\not\in \partial \Pi$.}$$
Then one can argue as in the proof of Theorems \ref{teo} and \ref{teo2} and obtain analogous results for $T$.


\end{document}